\documentclass{article}
\usepackage{graphicx} 
\usepackage{amsmath}
\usepackage{amssymb}
\usepackage{amsthm}
\usepackage{multirow}
\usepackage{rotating}
\usepackage{algorithm,algorithmicx}
\usepackage{algpseudocode}
\usepackage{bm}
\usepackage{tabularx}
\usepackage{array}
\usepackage{tikz}
\usepackage{authblk}
\usepackage{url}
\usepackage{xcolor}

\newcolumntype{Y}{>{\centering\arraybackslash}X}

\newtheorem{corollary}{Corollary}
\newtheorem{proposition}{Proposition}
\theoremstyle{remark} 
\newtheorem{remark}{Remark}

\title{Single-level reformulations for network pricing under Wardrop equilibrium with separable affine costs}
\author[1,2]{Miguel F. Anjos}
\author[3]{Jos\'e Neto}
\affil[1]{Group for Research in Decision Analysis (GERAD), Montr\'eal, Qu\'ebec, Canada}
\affil[2]{School of Mathematics and Maxwell Institute for Mathematical Sciences,
University of Edinburgh, Edinburgh, United Kingdom}
\affil[3]{SAMOVAR, T\'el\'ecom SudParis, Institut Polytechnique de Paris,
91120 Palaiseau, France}
\date{October 17, 2026}

\begin{document}

\maketitle
\begin{abstract}
We consider the problem of setting tolls on a subset of the arcs of a congested network
so as to maximize the revenue of a network authority, anticipating that users route
according to a Wardrop equilibrium. 
This problem is naturally cast as a bilevel program in which the leader
sets tolls while the followers respond by choosing an equilibrium routing. 

We focus on the important special case of separable affine arc cost functions. Under this
assumption, Wardrop equilibria admit Beckmann's convex characterization, from which we
derive three new single-level reformulations of the bilevel problem, exploiting both
Beckmann's formulation and its dual: an arc-based formulation, which we solve exactly, and
two path-based formulations, solved by column-and-constraint generation.
Beyond the reformulations,  
we further derive from Beckmann's
potential an upper bound on the optimal revenue that can be used to certify optimality gaps 
across all formulations.

An extensive computational study on instances derived from the Sioux Falls and Eastern
Massachusetts networks compares our formulations with the recent mixed-integer nonlinear
formulation of Beck, Labb\'e and Schmidt~\cite{BLS_2024}. 
 The arc-based formulation solves substantially more instances to proven optimality (within a time limit of 1800\,s) than the formulation of~\cite{BLS_2024} and, unlike it, returns a feasible solution on every instance, remaining 
effective on the largest 74-node network, on which~\cite{BLS_2024} solves none. 
The magnitude of the upper bounds on the tolls proves to be a major driver of tractability: 
the genuinely hard core of the benchmark emerges once the regulatory cap on tolls is removed. 
The two path-based formulations, used as heuristics, return a solution within 1\% of the optimum on 
every instance whose optimal value is known, often in seconds where exact formulations reach 
the time limit. 
\end{abstract}

\section{Introduction}
Pricing mechanisms play a central role in transportation and telecommunication
networks, both as a source of revenue and as a means of influencing users'
routing decisions in order to improve the overall performance of the system
\cite{ABEJW_2006,vH_2008}.
In many practical situations, however, the travel cost experienced by users also
depends on congestion effects generated by the aggregate traffic,
a phenomenon well documented in the road traffic literature \cite{BMW_1956,W_52}.
As a consequence, pricing decisions and traffic equilibria become strongly coupled,
leading naturally to bilevel optimization models in which a central authority
sets tolls anticipating the equilibrium response of the network users
\cite{BLMS_2001,LMS_1998}.

In this work, we consider a toll-setting problem in a congested network.
The network is represented by a directed graph $G=(V,A)$,
where $V$ denotes the set of nodes and $A$ the set of arcs.
For each arc $a\in A$,
the unit transportation cost is given by a flow-dependent function $c_a(\cdot)$,
and no capacity constraints are imposed on the arcs.
All arc cost functions are assumed to be nonnegative, continuous,
nondecreasing, and separable,
meaning that $c_a$ depends only on the total flow routed through arc $a$.

A set $\mathcal{K}\subseteq V\times V$ of origin-destination (OD) pairs is given.
For each commodity $k\in\mathcal{K}$,
the pair $(o_k,s_k)$ is associated with a demand $d_k>0$
that must be fully routed from $o_k$ to $s_k$.
We denote by $\mathcal{P}_k$ the set of elementary paths from $o_k$ to $s_k$ in $G$,
and assume throughout that $\mathcal{P}_k\neq\emptyset$ for every $k\in\mathcal{K}$.

A leader seeks to determine nonnegative tolls
$\tau=(\tau_a)_{a\in A}$
belonging to a prescribed compact and convex set
$\mathcal{T}\subseteq\mathbb{R}_+^A$.
We further assume that $\mathcal{T}$ is \emph{downward closed},
meaning that if $\tau\in\mathcal{T}$ and
$0\le\tau'\le\tau$ componentwise, then $\tau'\in\mathcal{T}$.
This assumption is satisfied in particular when $\mathcal{T}$
is defined by independent upper bound constraints on each toll, 
which is 
a common setting in the literature. 
These tolls are added to the flow-dependent arc costs,
yielding effective arc costs
\[
\widehat{c}_a(f_a,\tau_a) = c_a(f_a) + \tau_a, \qquad a\in A,
\]
where $f_a$ denotes the total flow routed through arc $a$.
The followers are nonatomic users who route themselves selfishly
according to Wardrop's equilibrium principle \cite{W_52,WW_52}:
for each commodity, flow is distributed only along minimum-cost paths
with respect to the effective costs $\widehat{c}_a$.

The resulting optimization problem can be formulated as the bilevel program
\begin{equation}
\label{eq:BW}
\tag{\tt{BW}}
\begin{array}{ll}
\max_{\tau\in\mathcal{T},\,f} & F(f,\tau) \\[0.2cm]
\mathrm{s.t.} & f\in\mathcal{W}(\tau),
\end{array}
\end{equation}
where $\mathcal{W}(\tau)$ denotes the set of Wardrop equilibrium flows
associated with the toll vector $\tau$,
and $F$ represents the leader's objective function.
Depending on the application,
$F$ may model revenue maximization, network efficiency,
or other system-level performance criteria \cite{DZ_2012,HARKS2019947}.

Problem~\eqref{eq:BW} is inherently difficult to solve,
combining a bilevel structure, equilibrium constraints,
and a 
dependence of arc costs on the equilibrium flows.
In this work, we focus on the practically and theoretically important
special case of separable affine arc cost functions \cite{LMP_1975,PAT_1994}.
Under this assumption,
Wardrop equilibria admit a characterization through Beckmann's convex potential formulation \cite{BMW_1956},
which provides a powerful and tractable framework for deriving exact single-level reformulations.

The main contribution of this paper is the derivation of new exact single-level
reformulations of problem~\eqref{eq:BW} in the affine separable case, exploiting both
Beckmann's formulation and its dual. We introduce three such reformulations --- an
arc-based formulation, which we solve exactly, and two path-based formulations, which
we solve heuristically by column-and-constraint generation --- and compare them with
the recent formulation proposed by Beck et al.~\cite{BLS_2024}. Beyond the
reformulations, we show that the common assumption that every commodity admits a
toll-free path induces finite implicit upper bounds on the tolls
(Proposition~\ref{prop:toll_upper_bound}), allowing the unbounded case to be treated
without arbitrary bounds, and we derive from Beckmann's potential an upper bound on the
optimal revenue that certifies optimality gaps uniformly across all formulations. An
extensive computational study on the Sioux Falls and Eastern Massachusetts networks
then shows that the arc-based formulation solves substantially more instances to proven
optimality and remains effective on the largest, $74$-node network, on which the
formulation of Beck et al. solves none, while the two path-based heuristics recover the
optimum --- or a solution within less than 1\% of it --- wherever it is known.

The remainder of the paper is organized as follows.
Section~\ref{sec:related_works} reviews the related literature.
Section~\ref{sec:wardrop} recalls Beckmann's potential formulation for Wardrop equilibria
and derives the dual formulation used throughout the paper.
Single-level reformulations are then introduced and analyzed in Section~\ref{sec:slr}.
Bounds and a reduction of the model size are discussed in Section 
\ref{sec:comput_matters}, 
and computational experiments are reported in Section~\ref{sec:computational_experiments}.
Concluding remarks are given in Section~\ref{sec:conclusion}.

\section{Related works}
\label{sec:related_works}

Network pricing problems have been extensively studied in the literature,
especially in the context of transportation and telecommunication networks;
see \cite{ABEJW_2006,vH_2008} for surveys covering a broad range of models and applications.
A large part of the literature considers the case where arc costs are fixed,
that is, independent of congestion effects.
In this setting, the followers' problem reduces to a collection of independent
shortest path computations, one for each origin-destination pair.
When the leader seeks to maximize toll revenues,
the resulting bilevel problem corresponds to the classical
\emph{network pricing problem},
investigated for instance in \cite{BLMS_2001,BCN_2024,LMS_1998}.
Complexity aspects of this problem have received considerable attention: 
 \cite{BCN_2024} establishes asymmetry results
in the complexity of the multi-commodity case,
and polynomial special cases have been identified in \cite{vHvdKMOB_2003}.

The framework considered in the present paper differs fundamentally from
these models because arc costs depend on the equilibrium flow.
Consequently, the lower-level problem is no longer a collection of independent
shortest path problems, but rather the computation of a Wardrop equilibrium,
which couples all commodities through shared congestion effects.

Another classical line of research concerns congestion pricing
and marginal-cost pricing.
Under suitable assumptions on the arc cost functions,
it is well known that marginal-cost tolls can induce a Wardrop equilibrium
that minimizes the total travel cost,
\[
\sum_{a\in A} f_a\, c_a(f_a),
\]
thereby achieving a socially optimal outcome.
Such results are classical in transportation science
and algorithmic game theory \cite{AGT_2007,BMW_1956}.
However, these approaches focus on achieving socially optimal equilibria
through analytically derived tolls,
whereas problem~\eqref{eq:BW} considers a general bilevel optimization framework
in which the leader may pursue arbitrary objectives,
including revenue maximization or system efficiency,
subject to a feasible toll set $\mathcal{T}$.
The structure of $\mathcal{T}$ itself has been the subject of dedicated studies:
Harks et al.~\cite{HARKS2019947} investigate the impact of toll caps
in privatized road networks, showing that restricting the toll space
can significantly affect equilibrium outcomes and network performance.

Several works have investigated pricing models involving equilibrium constraints
under assumptions that differ from those considered here.
In \cite{BDW_2002}, the authors consider fair resource allocation problems
in capacitated networks, followed by a revenue optimization phase
based on Lagrange multipliers associated with capacity constraints.
Ba\c{s}ar and Srikant \cite{BS_2002} study a Stackelberg game
under Nash equilibrium constraints,
where followers correspond to finitely many users
choosing among predetermined routes,
and where congestion and pricing jointly influence user utilities.
Dempe and Zemkoho \cite{DZ_2012} investigate theoretical properties
and optimality conditions for bilevel road pricing problems
with separable arc costs, providing a rigorous analytical foundation
for this class of models.

To the best of our knowledge,
the work of Beck et al.~\cite{BLS_2024}
is currently the only contribution proposing both
single-level reformulations and computational experiments
for a bilevel toll-setting problem with Wardrop equilibrium constraints
in a setting directly comparable to ours.
Notably, Beck et al.\ also consider a robust variant of the problem
under budgeted uncertainty  on the travel costs,
whereas the present paper focuses on the deterministic case.
We build upon the affine separable structure of the arc cost functions
to derive alternative exact single-level reformulations
based on Beckmann's potential formulation \cite{BMW_1956}
and duality arguments,
 introduce bounds, a model size reduction and provide a  
computational study 
of the resulting formulations.

\section{Wardrop equilibria and Beckmann reformulation} 
\label{sec:wardrop}
In this section, we recall classical results on Wardrop equilibria
that will be used throughout the paper.
We first present Beckmann's convex  
formulation \cite{BMW_1956}
for the computation of Wardrop equilibria under separable arc cost functions. 
We then specialize to the case of affine arc costs
and derive the corresponding dual formulation,
which plays a central role in the single-level reformulations
introduced in Section~\ref{sec:slr}.

\subsection{Beckmann's formulation and KKT conditions}
\label{subsec:convex_reform}
In the context of congested traffic networks,
a flow is said to be a \emph{Wardrop equilibrium} if,
for each commodity $k\in\mathcal{K}$,
flow is routed only along paths of minimum effective cost
from $o_k$ to $s_k$ \cite{W_52,WW_52}.
In other words, no user can unilaterally reduce their travel cost
by switching to a different route.

Given a flow $f\in\mathbb{R}_{+}^{A}$ and a toll vector $\tau\in\mathbb{R}^{A}_{+}$,
the effective cost of path $p\in\mathcal{P}_k$ for commodity $k\in\mathcal{K}$
is defined as
\[
\widehat{c}_{p,k}(f,\tau) = \sum_{a\in p}\widehat{c}_a(f_a,\tau_a),
\]
and we denote by $\pi_k(f,\tau)$ the minimum effective cost
of an $(o_k,s_k)$-path.
For each commodity $k\in\mathcal{K}$ and arc $a\in A$,
let $f_{p,k}$ denote the flow routed through path $p\in\mathcal{P}_k$,
and $f_{a,k}$ the corresponding arc flow for commodity $k$.
The total arc flow is then
\[
f_a = \sum_{k\in\mathcal{K}} f_{a,k}, \qquad a\in A.
\]
Throughout this section and the remainder of the paper,
the letter $f$ may refer either to the path-flow vector
$\left(f_{p,k}\right)_{k\in\mathcal{K},\, p\in\mathcal{P}_k}$,
to the arc-flow vector $\left(f_a\right)_{a\in A}$,
or to both simultaneously, with the intended meaning clear from context.

Using the notation above,
a flow $f$ is a Wardrop equilibrium with respect to a fixed toll vector
$\tau\in\mathbb{R}_{+}^{A}$ if and only if it satisfies the following system: 
\begin{subequations}
\label{eq:wardrop_syst1}
\begin{align}
&\sum_{p\in\mathcal{P}_k}{f_{p,k}} = d_k & k\in \mathcal{K}
\label{eq:wardrop_syst1_1}\\
&\widehat{c}_{p,k}\left(f,\tau\right)-\pi_k
\geq 0 &p\in \mathcal{P}_k,\ k\in \mathcal{K}
\label{eq:wardrop_syst1_2}\\
&f_{p,k}\left(\widehat{c}_{p,k}\left(f,\tau\right)-\pi_k
\right)=0 &p\in \mathcal{P}_k,\ k\in \mathcal{K}
\label{eq:wardrop_syst1_3}\\
&f_{p,k}\geq 0
& p\in \mathcal{P}_k,\ k\in \mathcal{K}
\label{eq:wardrop_syst1_4}\\
&\pi_k\geq 0 & \ k\in \mathcal{K}
\label{eq:wardrop_syst1_5}
\end{align}
\end{subequations} 
Constraints \eqref{eq:wardrop_syst1_1} ensure that all demands are fully satisfied. 
Constraints \eqref{eq:wardrop_syst1_2} impose that, for each commodity $k\in\mathcal K$, every path in $\mathcal P_k$ has length at least $\pi_k$. 
Complementarity constraints \eqref{eq:wardrop_syst1_3} ensure that only shortest paths may carry a strictly positive amount of flow. 
Constraints~\eqref{eq:wardrop_syst1_4}--\eqref{eq:wardrop_syst1_5}
enforce nonnegativity of path flows and shortest-path costs, respectively.

System~\eqref{eq:wardrop_syst1} corresponds exactly to the KKT optimality conditions
of Beckmann's  formulation \cite{BMW_1956},
given by the following convex program:
\begin{subequations}
\label{eq:wardrop_syst2}
\begin{align}
\min_{f}\ \  &B(f,\tau)=\sum_{a\in A}{\int_{0}^{f_a}{\widehat{c}_a\left(x,\tau_a\right)dx}}\\
\mbox{s.t.} & \sum_{p\in\mathcal{P}_k}{f_{p,k}} = d_k & k\in \mathcal{K}\\
&\sum_{k\in\mathcal{K}}{\sum_{\substack{p\in\mathcal{P}_k \colon \\ a\in p}}{f_{p,k}}} 
=f_a & a\in A \label{eq_wardrop_syst2_2}\\
&f_{p,k} \geq 0 
&k\in \mathcal{K}, p\in\mathcal{P}_k\\
&f_a \geq 0 & a \in A
\end{align}
\end{subequations} 

Under the assumptions above, a Wardrop equilibrium always exists \cite{BMW_1956,PAT_1994}.
Moreover, if the arc cost functions are strictly increasing,
the equilibrium arc flow vector is unique \cite{BMW_1956}.
When arc costs are only nondecreasing,
multiple Wardrop equilibria may exist and may yield different values
for the leader's objective in~\eqref{eq:BW}.
 In this work, we adopt the standard \emph{optimistic} assumption, meaning that 
among all lower-level optimal solutions,  one maximizing the leader's objective is selected. 

\subsection{Affine arc cost functions and dual formulation}
In this subsection, we specialize to the case of affine arc cost functions
$c_a\colon \mathbb{R}_+\rightarrow \mathbb{R}$ of the form
\[
c_a(f_a) = \alpha_a \cdot f_a + \beta_a, \qquad (\alpha_a, \beta_a)\in\mathbb{R}_{+}^{2},
\]
which are nonnegative and nondecreasing by assumption.
Under this structure, Beckmann's formulation~\eqref{eq:wardrop_syst2}
takes the following explicit form for a fixed toll vector $\tau\in\mathbb{R}_{+}^{A}$. 
\begin{subequations}
\label{eq:wardrop_syst_aff}
\begin{align}
\min_{f\geq 0}\ \  &B(f,\tau)=\sum_{a\in A}{\left(\frac{\alpha_a}{2}\cdot f_a^2 + \left(\beta_a+\tau_a\right)\cdot f_a\right)}\label{eq:wardrop_syst_aff_obj}\\
\mbox{s.t.} & \sum_{p\in\mathcal{P}_k}{f_{p,k}} = d_k, & k\in \mathcal{K} \label{eq:wardrop_syst_aff_1}\\
&\sum_{k\in\mathcal{K}}{\sum_{\substack{p\in\mathcal{P}_k \colon \\ a\in p}}{f_{p,k}}} 
-f_a\leq 0, & a\in A \label{eq:wardrop_syst_aff_2}
\end{align}
\end{subequations} 
Compared with~\eqref{eq:wardrop_syst2},
the flow conservation equality~\eqref{eq_wardrop_syst2_2} has been relaxed to inequality ~\eqref{eq:wardrop_syst_aff_2}.
This relaxation is valid because, under nonnegative and nondecreasing arc costs, there always exists 
an optimal solution of~\eqref{eq:wardrop_syst_aff} satisfying the equality at optimality:
increasing $f_a$ beyond the actual routed flow may only 
increase the objective.

We now derive the dual of~\eqref{eq:wardrop_syst_aff}
by forming the Lagrangian with multipliers
$\pi_k\in\mathbb{R}$ for the demand constraints
and $\lambda_a\geq 0$ for the  
inequalities~\eqref{eq:wardrop_syst_aff_2}.
The Lagrangian is given by 
\begin{align*}
&\mathcal{L}\left(f,\pi,\lambda\right)=\\
&B\left(f,\tau\right)+
\sum_{k\in\mathcal{K}}{\pi_k \cdot\left(\sum_{p\in\mathcal{P}_k}{f_{p,k}}-d_k\right)} 
+\sum_{a\in A}{\lambda_a\cdot\left(\sum_{k\in\mathcal{K}}{\sum_{\substack{p\in\mathcal{P}_k \colon \\ a\in p}}{f_{p,k}}} 
-f_a\right)}
\end{align*}
with $f\geq 0$, $\lambda\geq 0$. 

The Lagrangian decomposes separably over arcs and commodities as follows: 
\begin{align*}
&\mathcal{L}\left(f,\pi,\lambda\right)=
\sum_{a\in A}{\mathcal{L}_{1,a}\left(f_a,\lambda_a\right)} +\sum_{k\in\mathcal{K}}{\mathcal{L}_{2,k}\left(f_{\cdot,k},\pi_k,\lambda\right)} -
\sum_{k\in\mathcal{K}}{\pi_k \cdot d_k} 
\end{align*}
where $f_{\cdot,k} = (f_{p,k})_{p\in\mathcal{P}_k}$ denotes the path-flow vector
for commodity $k$, and 
\begin{align*}
\mathcal{L}_{1,a}\left(f_a,\lambda_a\right)=\frac{\alpha_a}{2}\cdot f_a^2 + \left(\beta_a+\tau_a-\lambda_a\right)\cdot f_a\\
\mathcal{L}_{2,k}\left(f_{\cdot,k},\pi_k,\lambda\right)=
\sum_{p\in\mathcal{P}_k}{
\left(\pi_k+\left(\sum_{a\in p}{\lambda_a}\right)\right)\cdot f_{p,k}}.
\end{align*}

The dual function of the Lagrangian is defined by 
$g\left(\pi,\lambda\right)=\min_{f\geq 0}{\mathcal{L}\left(f,\pi,\lambda\right)}$. 
Let $A_0 = \{a\in A \colon \alpha_a = 0\}$ denote the set of arcs
with constant cost function, and let $A_1 = A\setminus A_0$.
This partition arises naturally when minimizing the Lagrangian,
as the two cases yield 
different expressions. 
Indeed, minimizing the Lagrangian term by term over $f\geq 0$, we obtain: 
\begin{align*}
\min_{f_a\geq 0}{\mathcal{L}_{1,a}\left(f_a,\lambda_a\right)}=
\begin{cases}
0 \mbox{ if } \beta_a+\tau_a-\lambda_a\geq 0\\
-\infty \mbox{ otherwise }
\end{cases}
\mbox{ if }a\in A_0\\
\min_{f_a\geq 0}{\mathcal{L}_{1,a}\left(f_a,\lambda_a\right)}=
\begin{cases}
0 \mbox{ if } \beta_a+\tau_a-\lambda_a\geq 0\\
-\frac{\left(\beta_a+\tau_a-\lambda_a\right)^2}{2\cdot \alpha_a} \mbox{ otherwise }
\end{cases}
\mbox{ if }a\in A_1\\
\min_{f_{\cdot,k}\geq 0}{\mathcal{L}_{2,k}\left(f_{\cdot,k},\pi_k,\lambda\right)}=
\begin{cases}
0 \mbox{ if }-\pi_k\leq \sum_{a\in p}{\lambda_a}, \forall p\in \mathcal{P}_k\\
-\infty \mbox{ otherwise}.
\end{cases}
\end{align*}
We thus deduce, after the change of variables $\pi_k\leftarrow-\pi_k$, the dual problem
\begin{subequations}
\label{eq:wardrop_syst_aff_dual}
\begin{align}
\max_{\lambda\geq 0,\pi} & \sum_{k\in\mathcal{K}}{\pi_k\cdot d_k}
-\sum_{a\in A_1}{\frac{1}{2\cdot\alpha_a} \left[\min\left( 0,\ \beta_a+\tau_a-\lambda_a\right)\right]^2}\\
\mbox{s.t. }& \pi_k\leq \sum_{a\in p}{\lambda_a}, &k\in \mathcal{K}, p\in \mathcal{P}_k,\\
& \beta_a+\tau_a-\lambda_a \geq 0
& a\in A_0 
\end{align}
\end{subequations}

\begin{remark}
\label{rk:dual_simplification}
Without loss of generality, one may restrict attention to optimal solutions
$(\lambda^*,\pi^*)$ of~\eqref{eq:wardrop_syst_aff_dual}
satisfying $\beta_a + \tau_a - \lambda_a^* \leq 0$ for all $a\in A_1$.
Indeed, if this condition is violated for some arc $\tilde{a}\in A_1$,
then increasing $\lambda_{\tilde{a}}^*$ up to $\beta_{\tilde{a}}+\tau_{\tilde{a}}$
preserves feasibility and leaves the objective value unchanged,
since $[\min(0, \beta_{\tilde{a}}+\tau_{\tilde{a}}-\lambda_{\tilde{a}})]^2 = 0$
throughout this range; repeating this for every such arc yields an optimal
solution with the required property. 
\end{remark}

In light of Remark~\ref{rk:dual_simplification},
the objective of~\eqref{eq:wardrop_syst_aff_dual} simplifies,
yielding the equivalent dual formulation: 
\begin{subequations}
\label{eq:wardrop_syst_aff_dual2}
\begin{align}
\max_{\lambda\geq 0,\pi} & \sum_{k\in\mathcal{K}}{\pi_k\cdot d_k}
-\sum_{a\in A_1}{\frac{1}{2\cdot\alpha_a} \left(\beta_a+\tau_a-\lambda_a\right)^2}\\
\mbox{s.t. }& \pi_k\leq \sum_{a\in p}{\lambda_a}, &k\in \mathcal{K}, p\in \mathcal{P}_k\label{eq:wardrop_syst_aff_dual2_3}\\
& \beta_a+\tau_a-\lambda_a \geq 0 & a\in A_0. 
\end{align}
\end{subequations}
The strong duality between~\eqref{eq:wardrop_syst_aff} 
and~\eqref{eq:wardrop_syst_aff_dual2} stated in the next proposition 
will play a central role in the single-level reformulations 
derived in Section~\ref{sec:slr}.
\begin{proposition}
\label{prop:strong_duality}
Strong duality holds between~\eqref{eq:wardrop_syst_aff} 
and~\eqref{eq:wardrop_syst_aff_dual2}.
\end{proposition}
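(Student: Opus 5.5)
The plan is to obtain strong duality from the standard theory of convex programs with affine constraints, and then to transfer it from the Lagrangian dual \eqref{eq:wardrop_syst_aff_dual} to the simplified dual \eqref{eq:wardrop_syst_aff_dual2} using Remark~\ref{rk:dual_simplification}.

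\textbf{The primal.} Problem~\eqref{eq:wardrop_syst_aff} is a convex quadratic program. Its objective $B(\cdot,\tau)$ is convex because $\alpha_a\ge 0$, and its constraints \eqref{eq:wardrop_syst_aff_1}--\eqref{eq:wardrop_syst_aff_2}, together with $f\ge 0$, are all linear. The feasible set is nonempty, since each $\mathcal{P}_k\neq\emptyset$: route $d_k$ on one path and set $f_a$ equal to the routed flow. The objective is bounded below by $0$, because $\beta_a+\tau_a\ge 0$ and $f\ge 0$.

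By the Frank--Wolfe theorem, a convex quadratic function bounded below on a nonempty polyhedron attains its minimum. So an optimal solution exists. Because all constraints are affine, the refined Slater condition (linearity constraint qualification) holds with no strict-feasibility requirement. Hence Lagrangian duality is strong: the optimal value of \eqref{eq:wardrop_syst_aff} equals $\max_{\pi,\lambda\ge0} g(\pi,\lambda)$, and the maximum is attained. Alternatively, one can argue directly from KKT: KKT conditions are necessary at a minimizer under linear constraints, and the KKT multipliers $(\pi,\lambda)$ of a primal optimum give a dual point with zero gap.

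\textbf{The dual function.} Next I check that $g$ is exactly the objective of \eqref{eq:wardrop_syst_aff_dual}, with the constraints describing where $g>-\infty$. This follows from the separable minimization already carried out in the text. For $a\in A_1$, the minimizer of $\tfrac{\alpha_a}{2}f_a^2+(\beta_a+\tau_a-\lambda_a)f_a$ over $f_a\ge0$ is $f_a=\max(0,\lambda_a-\beta_a-\tau_a)/\alpha_a$, which gives $-\tfrac{1}{2\alpha_a}[\min(0,\beta_a+\tau_a-\lambda_a)]^2$. After the sign change $\pi_k\leftarrow-\pi_k$, this yields \eqref{eq:wardrop_syst_aff_dual}, so strong duality holds between \eqref{eq:wardrop_syst_aff} and \eqref{eq:wardrop_syst_aff_dual}.

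\textbf{Transfer to \eqref{eq:wardrop_syst_aff_dual2}.} The two duals have the same feasible set but different objectives. Since
\[
\tfrac{1}{2\alpha_a}(\beta_a+\tau_a-\lambda_a)^2 \;\ge\; \tfrac{1}{2\alpha_a}[\min(0,\beta_a+\tau_a-\lambda_a)]^2,
\]
the objective of \eqref{eq:wardrop_syst_aff_dual2} is pointwise no larger. So the optimal value of \eqref{eq:wardrop_syst_aff_dual2} is at most that of \eqref{eq:wardrop_syst_aff_dual}, and weak duality holds for \eqref{eq:wardrop_syst_aff_dual2} as well.

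For the reverse inequality, Remark~\ref{rk:dual_simplification} supplies an optimal $(\lambda^*,\pi^*)$ of \eqref{eq:wardrop_syst_aff_dual} with $\beta_a+\tau_a-\lambda^*_a\le0$ for all $a\in A_1$. At such a point the two objectives coincide. The point is also feasible for \eqref{eq:wardrop_syst_aff_dual2}: raising $\lambda$ keeps the path constraints valid, and the $A_0$ constraints are untouched. Therefore both optimal values equal the primal optimum, and the dual optimum is attained.

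\textbf{Main obstacle.} The delicate step is justifying zero gap and attainment without a strictly feasible point. Feasible flows may sit on the boundary $f\ge0$, and the dual has exponentially many constraints. I would rely on the affine-constraint qualification, or equivalently invoke quadratic programming duality (Dorn's theorem), rather than attempting a Slater argument.
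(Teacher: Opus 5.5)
Your proof is correct, but it takes a different route from the paper's.

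\textbf{Your route.} You argue through general convex duality theory. You obtain a primal minimizer from the Frank--Wolfe theorem. You get zero gap and dual attainment for the Lagrangian dual \eqref{eq:wardrop_syst_aff_dual} from the linear-constraint qualification. This is legitimate even with $f\ge 0$ kept implicit in $g$, because the objective is finite and convex everywhere and all constraints are affine. You then transfer the result to \eqref{eq:wardrop_syst_aff_dual2} in two steps: the pointwise inequality between the two objectives on their common feasible set gives weak duality, and Remark~\ref{rk:dual_simplification} gives the reverse inequality.

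Incidentally, the obstacle you flag at the end is not real. Spreading each $d_k$ over all of the finitely many paths in $\mathcal{P}_k$, and taking $f_a$ strictly above the routed flow, gives a feasible point with $f>0$ and strict inequality in \eqref{eq:wardrop_syst_aff_2}. So even the ordinary Slater condition applies.

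\textbf{The paper's route.} The paper never passes through \eqref{eq:wardrop_syst_aff_dual} or the Remark. It obtains a minimizer $f^\star$ by restricting to a compact polytope. It then writes down an explicit dual point: $\lambda^\star_a=\alpha_a f^\star_a+\beta_a+\tau_a$, and $\pi^\star_k$ equal to the shortest effective path cost. It checks feasibility of this point directly. It shows the gap is zero through the identity $\sum_k \pi^\star_k d_k=\sum_a \lambda^\star_a f^\star_a$, which follows from the Wardrop complementarity conditions satisfied by the Beckmann minimizer. It closes with an appeal to weak duality.

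\textbf{What each route buys.} The paper's constructive argument delivers Corollary~\ref{cor:dual_from_primal} for free: the dual optimum is the vector of equilibrium arc costs and shortest-path costs. That is exactly what justifies eliminating $\lambda$ in \eqref{eq:form_sd_arc_small}; your argument would need an extra KKT or saddle-point step to recover it. Your argument, in turn, establishes explicitly the weak-duality direction for \eqref{eq:wardrop_syst_aff_dual2}, which the paper invokes without comment. It also avoids appealing to the Beckmann--Wardrop equivalence. Both proofs ultimately rest on the same fact: KKT conditions are necessary for linearly constrained convex problems.
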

\begin{proof}
By assumption, $\mathcal{P}_k\neq\emptyset$ for all $k\in\mathcal{K}$,
so the feasible set of~\eqref{eq:wardrop_syst_aff} is nonempty.
Although the feasible set is unbounded due to the relaxation
of~\eqref{eq_wardrop_syst2_2} to an inequality, 
an optimal solution exists that satisfies the latter with equality, 
using the fact that the objective $B(f,\tau)$ is nondecreasing in $f_a$. 
Therefore, 
we may restrict the minimization
to the set
\[
\mathcal{F} = \left\{f \geq 0 \colon
\eqref{eq:wardrop_syst_aff_1},\ \eqref{eq:wardrop_syst_aff_2},\ 
f_a \leq \sum_{k\in\mathcal{K}}d_k,\ a\in A
\right\},
\]
which is nonempty, compact, and polyhedral.
Since $B(f,\tau)$ is continuous, it attains its minimum over $\mathcal{F}$,
and an optimal primal solution $f^\star$ exists. We consider such an optimal solution 
satisfying~\eqref{eq:wardrop_syst_aff_2}   
with equality.  

Define the candidate dual solution by
\begin{subequations}
\label{eq:candidate_dual}
\begin{align}
\lambda_a^\star &= \widehat{c}_a(f_a^\star,\tau_a) 
= \alpha_a f_a^\star + \beta_a + \tau_a,
\quad a\in A,\\
\pi_k^\star &= \min_{p\in\mathcal{P}_k}
\left\{\sum_{a\in p}\widehat{c}_a(f_a^\star,\tau_a)\right\},
\quad k\in\mathcal{K}.
\end{align}
\end{subequations}
We show that $(\lambda^\star,\pi^\star)$ is feasible
for~\eqref{eq:wardrop_syst_aff_dual2}
and that the primal and dual objective values coincide, thus  
establishing strong duality.

\medskip
\noindent\textit{Dual feasibility.}
Since arc costs are nonnegative,
$\lambda_a^\star = \widehat{c}_a(f_a^\star,\tau_a)\geq 0$ for all $a\in A$.
For each $k\in\mathcal{K}$ and $p\in\mathcal{P}_k$,
\[
\sum_{a\in p}\lambda_a^\star
= \sum_{a\in p}\widehat{c}_a(f_a^\star,\tau_a)
= \widehat{c}_{p,k}(f^\star,\tau)
\geq \pi_k^\star,
\]
so constraint~\eqref{eq:wardrop_syst_aff_dual2_3} is satisfied.
For arcs $a\in A_0$, we have $\alpha_a=0$, hence
$\lambda_a^\star = \beta_a+\tau_a$,
so the constraint $\beta_a+\tau_a-\lambda_a^\star\geq 0$
holds with equality.
Thus $(\lambda^\star,\pi^\star)$ is feasible for~\eqref{eq:wardrop_syst_aff_dual2}.

\medskip
\noindent\textit{Equality of objective values.}
For $a\in A_1$, since $\lambda_a^\star = \alpha_a f_a^\star+\beta_a+\tau_a$,
we have $(\beta_a+\tau_a)f_a^\star
= \lambda_a^\star f_a^\star - \alpha_a(f_a^\star)^2$,
while for $a\in A_0$, $\alpha_a=0$ gives
$(\beta_a+\tau_a)f_a^\star = \lambda_a^\star f_a^\star$.
Therefore the primal objective expands as 
\begin{align*}
B(f^\star,\tau)
&= \sum_{a\in A}\left(
    \frac{\alpha_a}{2}(f_a^\star)^2 + (\beta_a+\tau_a)f_a^\star
  \right)\\
&= \sum_{a\in A}\lambda_a^\star f_a^\star
  - \sum_{a\in A_1}\frac{\alpha_a}{2}(f_a^\star)^2\\
&= \sum_{a\in A}\lambda_a^\star f_a^\star
  - \sum_{a\in A_1}\frac{(\lambda_a^\star-\beta_a-\tau_a)^2}{2\alpha_a}.
\end{align*}
It remains to show that
$\sum_{a\in A}\lambda_a^\star f_a^\star
= \sum_{k\in\mathcal{K}}\pi_k^\star d_k$.
Using the Wardrop complementarity conditions~\eqref{eq:wardrop_syst1_3},
which state that $f_{p,k}^\star > 0$ only if
$\widehat{c}_{p,k}(f^\star,\tau) = \pi_k^\star$,
together with the demand constraints~\eqref{eq:wardrop_syst1_1},
we obtain
\begin{align*}
\sum_{k\in\mathcal{K}}\pi_k^\star d_k
&= \sum_{k\in\mathcal{K}}\sum_{p\in\mathcal{P}_k}
    \pi_k^\star f_{p,k}^\star
= \sum_{k\in\mathcal{K}}\sum_{p\in\mathcal{P}_k}
    \widehat{c}_{p,k}(f^\star,\tau)\cdot f_{p,k}^\star\\
&= \sum_{k\in\mathcal{K}}\sum_{p\in\mathcal{P}_k}
    \sum_{a\in p}\widehat{c}_a(f_a^\star,\tau_a)\cdot f_{p,k}^\star
= \sum_{a\in A}\lambda_a^\star
    \sum_{k\in\mathcal{K}}\sum_{\substack{p\in\mathcal{P}_k\colon\\a\in p}}f_{p,k}^\star
= \sum_{a\in A}\lambda_a^\star f_a^\star,
\end{align*}
where the last equality uses the fact that
$f_a^\star = \sum_{k\in\mathcal{K}}\sum_{\substack{p\in\mathcal{P}_k\colon\\a\in p}}f_{p,k}^\star$. 
Hence $B(f^\star,\tau)$ equals the objective value of~\eqref{eq:wardrop_syst_aff_dual2}
at $(\lambda^\star,\pi^\star)$, which, combined with weak duality, proves the claim.
\end{proof}

The following corollary is an immediate consequence of 
Proposition~\ref{prop:strong_duality} and its proof.

\begin{corollary}
\label{cor:dual_from_primal}
Given an optimal solution $f^\star$ of the primal problem~\eqref{eq:wardrop_syst_aff},
the pair $(\lambda^\star,\pi^\star)$ defined by~\eqref{eq:candidate_dual}
is an optimal solution of the dual problem~\eqref{eq:wardrop_syst_aff_dual2}.
Moreover, the dual optimal solution  
admits the following interpretation:
$\lambda_a^\star$ equals the effective arc cost at equilibrium,
and $\pi_k^\star$ equals the shortest-path cost for commodity $k$
under the effective arc costs induced by $f^\star$.
\end{corollary}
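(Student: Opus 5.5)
The plan is to reuse the construction in the proof of Proposition~\ref{prop:strong_duality}, now starting from an \emph{arbitrary} optimal solution $f^\star$ of~\eqref{eq:wardrop_syst_aff} rather than one chosen for convenience. Define $(\lambda^\star,\pi^\star)$ by~\eqref{eq:candidate_dual}. Two facts then suffice: this pair is feasible for~\eqref{eq:wardrop_syst_aff_dual2}, and its dual objective value equals $B(f^\star,\tau)$. Weak duality, which follows from the Lagrangian construction preceding~\eqref{eq:wardrop_syst_aff_dual}, then shows that no feasible dual point can exceed $B(f^\star,\tau)$. Hence $(\lambda^\star,\pi^\star)$ is dual optimal. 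The interpretation in the second part of the statement is immediate from the definition~\eqref{eq:candidate_dual}.

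The first preparatory step handles the possibility that $f^\star$ satisfies~\eqref{eq:wardrop_syst_aff_2} strictly on some arc. Suppose the routed flow $\bar f_a$ on arc $a$ satisfies $\bar f_a<f_a^\star$. If $\alpha_a>0$, or if $\beta_a+\tau_a>0$, then lowering $f_a^\star$ to $\bar f_a$ strictly decreases $B$, which contradicts optimality. So the inequality can be slack only on arcs of $A_0$ with $\beta_a+\tau_a=0$. On such an arc, $\lambda_a^\star=0$ regardless of the value of $f_a^\star$, so the slack does not affect any of the quantities computed below. In the identity
\[
\sum_a\lambda_a^\star f_a^\star=\sum_a\lambda_a^\star\bar f_a,
\]
those arcs contribute zero on both sides.

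The second step is to derive the Wardrop conditions~\eqref{eq:wardrop_syst1} from the optimality of $f^\star$. As stated in Section~\ref{subsec:convex_reform}, system~\eqref{eq:wardrop_syst1} is exactly the KKT system of Beckmann's convex program. The constraints are linear, so the KKT conditions are necessary at any minimizer, and~\eqref{eq:wardrop_syst1_3} holds at $f^\star$.

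With this in hand, the feasibility check and the objective-value computation are verbatim those of the proof of Proposition~\ref{prop:strong_duality}. On $A_1$, the term $(\beta_a+\tau_a-\lambda_a^\star)^2/(2\alpha_a)$ equals $\alpha_a(f_a^\star)^2/2$, and the complementarity condition gives $\sum_k\pi_k^\star d_k=\sum_a\lambda_a^\star f_a^\star$.

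I expect the only genuine obstacle to be the slack-arc issue handled in the first step. The rest is a direct reuse of the earlier calculation.
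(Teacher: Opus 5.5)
Your proposal is correct and follows the same route as the paper. The paper obtains the corollary directly from the proof of Proposition~\ref{prop:strong_duality}: feasibility of the candidate~\eqref{eq:candidate_dual}, equality of the primal and dual objective values via the Wardrop complementarity conditions, and weak duality. Your extra step on arcs where~\eqref{eq:wardrop_syst_aff_2} is slack (possible only for $a\in A_0$ with $\beta_a+\tau_a=0$, where $\widehat{c}_a\equiv 0$) is a useful refinement: the paper's argument is written for an optimal solution satisfying that inequality with equality, whereas the corollary is stated for an arbitrary optimal $f^\star$. It also justifies invoking the KKT system of~\eqref{eq:wardrop_syst2} after replacing $f_a^\star$ by the routed flow, a replacement that leaves all effective costs unchanged; making that replacement explicit would tighten your KKT step.
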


For affine nondecreasing arc cost functions, a Wardrop equilibrium can be computed 
by
the Frank--Wolfe algorithm~\cite{PAT_1994} or by convex quadratic programming. We adopt the
latter and solve Beckmann's problem~\eqref{eq:wardrop_syst_aff} directly.
In our implementation, the exponentially many
path-flow variables $\left(f_{p,k}\right)$ of~\eqref{eq:wardrop_syst_aff} are replaced by the
equivalent arc-commodity variables $\left(f_{a,k}\right)$ together with the associated flow
conservation constraints, which yields a formulation of polynomial size.

The primal and dual formulations derived in this section
provide the foundation for the single-level reformulations
of problem~\eqref{eq:BW} developed in the next section.

\section{Single-level reformulations of problem~\eqref{eq:BW}}
\label{sec:slr}
In this section, we derive several single-level reformulations of
problem~\eqref{eq:BW} in the affine separable case, focusing on the
profit-maximization objective $F(f,\tau)=\sum_{a\in A}f_a\tau_a$,
which is the objective considered in~\cite{BLS_2024}
and adopted throughout the remainder of this paper. 
We first recall the mixed-integer nonlinear programming formulation 
recently proposed by Beck et al.~\cite{BLS_2024} 
(Section~\ref{sec:bls_formulation}),
which serves as a benchmark in our computational experiments.
We then introduce three new single-level reformulations.
The first two exploit the primal-dual structure of Wardrop equilibria
established in Section~\ref{sec:wardrop}:
enforcing primal feasibility, dual feasibility, and strong duality simultaneously
eliminates the lower-level equilibrium problem and yields an equivalent 
single-level program. 
An arc-flow formulation and a path-flow formulation are derived in this way,
differing in the choice of flow variables and the structure of the resulting constraints.
The third reformulation takes a different route,
directly encoding the Wardrop equilibrium conditions
through the complementarity system~\eqref{eq:wardrop_syst1},
leading to a formulation that can be handled 
through SOS1 constraints in modern mixed-integer optimization solvers.

\subsection{MINLP reformulation proposed by Beck et al.~\cite{BLS_2024}}
\label{sec:bls_formulation}
Beck et al.~\cite{BLS_2024} propose the following mixed-integer nonlinear
single-level reformulation of the toll-setting problem. 
The formulation introduces, for each commodity $k\in\mathcal{K}$ and arc $a\in A$,
a binary variable $z_{a,k}$ indicating whether arc $a$ may belong to a shortest path
used by commodity $k$, and a variable $t_{i,k}$ representing the shortest-path distance
from node $i$ to the destination $s_k$ for commodity $k$.

\begin{subequations}
\small
\label{eq:form_bls}
\begin{align}
    \max_{\tau,f,t,z}\  &\sum_{a\in A}{f_a\cdot \tau_a}\\
    \mathrm{s.t. } &\ N{f_{\cdot,k}} = b_k, & k\in \mathcal{K}\\
&\sum_{k\in\mathcal{K}}{f_{a,k}} 
=f_a, & a\in A \\
&t_{i,k}-t_{j,k}\leq \alpha_a\cdot f_a + \beta_a + \tau_a & a=(i,j)\in A,\ k\in \mathcal{K} \label{eq:form_bls3}\\
&t_{i,k}-t_{j,k}\geq \alpha_a\cdot f_a + \beta_a + \tau_a - M_{a,k}\left(1-z_{a,k}\right)& a=(i,j)\in A,\ k\in \mathcal{K}\\
&t_{s_k,k}=0&k\in \mathcal{K}\\
&f_{a,k}\leq d_{k}\cdot z_{a,k} &  a\in A,\ k\in \mathcal{K}\label{eq:form_bls5}\\
&\tau \in\mathcal{T},\ f\geq 0,\ t\geq 0 \label{eq:form_bls6}\\
&z_{a,k}\in\{0,1\}&a\in A, k\in \mathcal{K}\label{eq:form_bls7}
\end{align}
\end{subequations}
where $N\in\mathbb{R}^{V\times A}$ is the node-arc incidence matrix of $G$, 
and $b_k\in\mathbb{R}^V$ is the demand vector for commodity $k$, 
with $\left(b_k\right)_{o_k}=d_k$, $\left(b_k\right)_{s_k}=-d_k$, 
and all other entries equal to zero.
The constants 
$M_{a,k}$ are big-M parameters,
for which explicit sufficiently large values are provided in~\cite{BLS_2024}.
Constraints~\eqref{eq:form_bls3}--\eqref{eq:form_bls7} 
ensure that positive flow for commodity $k$ 
can only be routed along shortest $(o_k,s_k)$-paths.

\subsubsection*{Variable bounds and big-M constants}
Due to the nonnegativity of the costs on the arcs, problem~\eqref{eq:form_bls} admits an optimal solution routing
no flow on circuits, which allows several variables to be bounded or fixed
without loss of optimality. We record here 
the bounds used in the sequel and 
 valid inequalities additionally proposed by Beck et al.~\cite{BLS_2024}.

Flow entering an origin or leaving a destination may be set to
zero:
\begin{equation}
\label{eq:flow_bounds}
f_{a,k}=0, \qquad  a\in \delta^{-}\left(o_k\right)\cup\delta^{+}\left(s_k\right),\ k\in \mathcal{K},
\end{equation}
where $\delta^{-}(v)$ (resp.\ $\delta^{+}(v)$) denotes the set of arcs having
node $v$ as terminal (resp.\ initial) endpoint. Combining~\eqref{eq:flow_bounds}
with the demand bound $f_{a,k}\le d_k$ implied by~\eqref{eq:form_bls5} (since
$z_{a,k}\le 1$), the equilibrium arc flow on every arc $a\in A$ satisfies
\begin{equation}
\label{eq:fa_bounds}
0\le f_a\le f_a^{\tt up}
:=\sum_{\substack{k\in\mathcal K\,:\\ a\notin\delta^{-}(o_k)\cup\delta^{+}(s_k)}} d_k .
\end{equation}
The quantity $f_a^{\tt up}$ serves throughout as
the reference arc-flow upper bound, both in~\eqref{eq:form_bls} and in the other
formulations of Section~\ref{sec:slr}.
These 
bounds also yield a finite upper bound on the shortest-path variables:
telescoping~\eqref{eq:form_bls3} along any elementary $(i,s_k)$-path and using
$t_{s_k,k}=0$ together with $f_a\le f_a^{\tt up}$ and
$\tau_a\le\tau^{\tt up}_a:=\max_{\tau\in\mathcal{T}}\tau_a$, one obtains, for
every commodity $k\in\mathcal{K}$ and node $i\in V$,
\begin{equation}
\label{eq:bounds_tik}
0\leq t_{i,k}\leq
\min_{p\in\mathcal{P}^{k}_{i}}\sum_{a\in p}\left(\alpha_a\, f^{\tt{up}}_{a}
+\beta_a+\tau_{a}^{\tt{up}}\right),
\end{equation}
where $\mathcal{P}^{k}_{i}$ denotes the set of elementary $(i,s_k)$-paths in
$G$. These bounds are used to set the big-M constants $M_{a,k}$
in~\eqref{eq:form_bls}, similar to~\cite{BLS_2024}.

\subsubsection*{Valid inequalities for the routing binary variables}

Beck et al.~\cite{BLS_2024} 
 suggest to add two types of inequalities to  \eqref{eq:form_bls}, 
 to improve computational performance. 
The first is
satisfied by every optimal solution, the second holds without loss of
optimality. We report them hereafter. 
For each pair of reverse arcs $(i,j),(j,i)\in A$, every optimal solution
satisfies
\begin{equation}
\label{eq:bls_inverse_arcs}
z_{(i,j),k}+z_{(j,i),k}\leq 1, \qquad k\in\mathcal{K}.
\end{equation}
Moreover, there exists an optimal solution satisfying
\begin{equation}
\label{eq:toll_lb}
\tau_a \geq \tau^{\tt up}_{a}\Bigl(1-\sum_{k\in\mathcal{K}} z_{a,k}\Bigr),
\qquad a\in A,
\end{equation}
i.e., an arc
that no commodity routes along a shortest path (so $z_{a,k}=0$ for all $k$) is
assigned the maximal toll $\tau^{\tt up}_a$.
\subsection{Strong duality based formulations}
\label{sec:sd_formulations}
The strong duality result established in Proposition~\ref{prop:strong_duality}
provides a natural route to eliminate the lower-level equilibrium problem: 
the lower-level constraint $f\in\mathcal{W}(\tau)$ 
is equivalently reformulated by jointly imposing
primal feasibility of~\eqref{eq:wardrop_syst_aff},
dual feasibility of~\eqref{eq:wardrop_syst_aff_dual2},
and 
strong duality, 
yielding an equivalent single-level reformulation.
We present two variants of this approach,
differing in their choice of flow variables:
the first relies on arc-flow variables and flow-conservation constraints,
while the second additionally introduces path-flow variables,
leading to a formulation that is structurally closer to the original Wardrop conditions.

\subsubsection{Arc-based formulation}
Applying Proposition~\ref{prop:strong_duality} to problem~\eqref{eq:BW}
and expressing flow conservation through arc-flow variables
yields the following single-level reformulation.
\begin{subequations}
\label{eq:form_sd_arc}
\begin{align}
    \max_{\tau,f,\pi,\lambda}\  &  \sum_{a\in A}{f_a\cdot \tau_a}  \\
    \mathrm{s.t. } &\ N{f^{k}} = b^k, & k\in \mathcal{K}\\
&\sum_{k\in\mathcal{K}}{f_{a,k}} 
=f_a, & a\in A \\
& \sum_{a\in A}{\left(\frac{\alpha_a}{2}\cdot f_a^2 + \left(\beta_a+\tau_a\right)\cdot f_a\right)}\leq \notag \\ 
&\qquad \sum_{k\in\mathcal{K}}{\pi_k\cdot d_k}
-\sum_{a\in A_1}{\frac{1}{2\cdot\alpha_a} \left(\beta_a+\tau_a-\lambda_a\right)^2} \\
& \pi_k\leq \sum_{a\in p}{\lambda_a}, &k\in \mathcal{K}, p\in \mathcal{P}_k,  \label{eq:sd_arc_path_ineq_gen}\\ 
& \beta_a+\tau_a-\lambda_a \geq 0 & a\in A_0 \\
& 0\leq f_{a,k}\leq d_k & k\in \mathcal{K}, a\in A \\
&\tau \in\mathcal{T}, \lambda\geq 0
\end{align}
\end{subequations}

Corollary~\ref{cor:dual_from_primal} shows that,
at optimality, the dual variables $\lambda_a$
coincide with the corresponding equilibrium arc costs
$\alpha_a f_a + \beta_a + \tau_a$.
These variables can therefore be eliminated from
formulation~\eqref{eq:form_sd_arc}
through the substitutions
\[
\lambda_a = \alpha_a f_a + \beta_a + \tau_a, \qquad a\in A,
\]
leading to the more compact formulation~\eqref{eq:form_sd_arc_small}.

\begin{subequations}
\label{eq:form_sd_arc_small}
\begin{align}
    \max_{\tau,f,\pi,t}\  &t\\
    \mathrm{s.t. } &\ N{f^{k}} = b^k, & k\in \mathcal{K}\\
&\sum_{k\in\mathcal{K}}{f_{a,k}} 
=f_a, & a\in A \\
&t= \sum_{a\in A}{f_a\cdot \tau_a} \label{eq:epi_con}\\
& t+\sum_{a\in A}{\left(\alpha_a\cdot f_a^2+\beta_a\cdot f_a\right)} 
\leq 
\sum_{k\in\mathcal{K}}{\pi_k\cdot d_k} \label{eq:sd_flow_ineq}\\
&\pi_k\leq \sum_{a\in p}{\left(\alpha_a\cdot f_a+\beta_a+\tau_a\right)}, &k\in \mathcal{K}, p\in \mathcal{P}_k,  \label{eq:sd_arc_path_ineq}\\ 
& 0\leq f_{a,k}\leq d_k & k\in \mathcal{K}, a\in A \\
&\tau \in\mathcal{T},t\geq 0
\end{align}
\end{subequations}
A notable structural feature of formulation~\eqref{eq:form_sd_arc_small}
is that all nonconvexity is concentrated in the single bilinear
constraint~\eqref{eq:epi_con}. 
The remaining constraints are either linear
or convex quadratic (constraint~\eqref{eq:sd_flow_ineq}).

\subsubsection{Path-based formulation}
An alternative formulation is obtained by replacing the arc-flow 
conservation constraints by a path-flow representation.
Although the number of paths may be exponentially large,
this formulation can serve as the basis for a column-and-constraint generation approach:
one starts with a restricted set of paths and iteratively adds
new paths together with their associated constraints,
with the aim of identifying high-quality solutions
while keeping the size of the formulation manageable.
The performance of such an approach will be discussed in
Section~\ref{sec:computational_experiments}. 

\begin{subequations}
\label{eq:form_sd_paths_small}
\begin{align}
    \max_{\tau,f,\pi,t}\  &t \\ 
    \mathrm{s.t.} &  \sum_{p\in\mathcal{P}_k}{f_{p,k}} = d_k, & k\in \mathcal{K}\\
&\sum_{k\in\mathcal{K}}{\sum_{\substack{p\in\mathcal{P}_k \colon \\ a\in p}}{f_{p,k}}} 
-f_a= 0, & a\in A \\
&t= \sum_{a\in A}{f_a\cdot \tau_a} \label{eq:path_epi_con}\\
& t+\sum_{a\in A}{\left(\alpha_a\cdot f_a^2+\beta_a\cdot f_a\right)} 
\leq 
\sum_{k\in\mathcal{K}}{\pi_k\cdot d_k} \label{eq:path_sd_flow_ineq}\\
&\pi_k\leq \sum_{a\in p}{\left(\alpha_a\cdot f_a+\beta_a+\tau_a\right)}, &k\in \mathcal{K}, p\in \mathcal{P}_k,  \label{eq:sd_paths_arc_path_ineq}\\ 
&\tau \in\mathcal{T},f\geq 0,t\geq 0
\end{align}
\end{subequations}
\subsection{Complementarity-based formulation}
\label{sec:cs_formulations}
The previous formulations exploit Beckmann's formulation and
strong duality to eliminate the lower-level problem.
An alternative approach consists in directly enforcing
the Wardrop equilibrium conditions through the complementarity
system~\eqref{eq:wardrop_syst1}. 
Recall that a Special Ordered Set of type 1 (SOS1) constraint 
imposed on a pair of variables $(x,y)$ enforces that at most one 
of the two variables is nonzero, which is precisely the structure
of the complementarity constraints~\eqref{eq:cs_compl}. 
SOS1 constraints are supported by several modern 
mixed-integer optimization solvers. 

\begin{subequations}
\label{eq:form_cs}
\begin{align}
    \max_{\tau,f,\pi}\  &\sum_{a\in A}{f_a\cdot \tau_a}\\
    \mathrm{s.t.} & \sum_{p\in\mathcal{P}_k}{f_{p,k}} = d_k & k\in \mathcal{K}\\
&\sum_{k\in\mathcal{K}}{\sum_{\substack{p\in\mathcal{P}_k \colon \\ a\in p}}{f_{p,k}}}
-f_a= 0, & a\in A \\
&\widehat{c}_{p,k}\left(f,\tau\right)-\pi_k\geq 0 &p\in \mathcal{P}_k,\ k\in \mathcal{K}\\
&f_{p,k}\left(\widehat{c}_{p,k}\left(f,\tau\right)-\pi_k\right)=0 &p\in \mathcal{P}_k,\ k\in \mathcal{K} \label{eq:cs_compl}\\
&\pi_k\geq 0 & \ k\in \mathcal{K}\\
&\tau \in\mathcal{T},f\geq 0
\end{align}
\end{subequations}

\section{Bounds and size reduction}
\label{sec:comput_matters}
The formulations introduced in Section~\ref{sec:slr} can be tightened with bounds that
exploit structural properties of the toll-setting problem. When the data permit, 
they can also be reduced in size. This section establishes three distinct families of bounds, 
and one model size reduction.

The first family of bounds (Section \ref{sec:toll_bounds}) is concerned with bounding the toll 
set $\mathcal{T}$. 
In Beck et al.~\cite{BLS_2024}, explicit upper bounds on toll values are assumed,
which may be interpreted as regulatory restrictions but are otherwise arbitrary.
We show that, under the common assumption that every commodity admits
a toll-free alternative path (e.g., \cite{BLMS_2001,LMS_1998,BCN_2024}) --- interpretable
as a route offered by a competitor of the leader --- finite upper bounds on
toll values can be derived endogenously, without any arbitrary choice.
Moreover, as our computational experiments will demonstrate,
the magnitude of these bounds can have a significant impact on computational
performance.

The second (Section \ref{sec:var_bounds}) 
concerns default bounds on the variables appearing in the formulations of
Section~\ref{sec:slr}. These bounds are either valid inequalities, satisfied by every
feasible solution, or constraints that can be added without loss of optimality, in the
sense that at least one optimal solution satisfies them. In both cases they may improve
the quality of continuous relaxations and thereby accelerate the solution process. They 
are added to the new formulations (i.e. the ones of Sections \ref{sec:sd_formulations} and 
\ref{sec:cs_formulations})   
and kept active throughout our computational experiments.

The third family  (Section \ref{sec:beckmann_bound}) 
is of a different nature: an upper bound on the \emph{optimal value}
$z^\star$ of~\eqref{eq:BW}, derived from Beckmann's formulation.  It can be  
used to certify optimality gaps but unlike the previous two it is not added to any 
formulation in our computational experiments. 

Lastly, we show (in Section \ref{sec:grouping}) 
how to reduce the model size for formulations involving
per-commodity arc-flow variables $\left(f_{a,k}\right)$. When the arc costs are
independent of the commodity, commodities sharing a common destination can be merged
without affecting the set of optimal solutions. This grouping reduces the number of
per-commodity arc-flow variables and of the associated coupling constraints.

\subsection{Implicit bounds induced by toll-free alternatives}
\label{sec:toll_bounds}

We now show that 
the toll-free path assumption introduced above 
can be used to derive 
upper bounds on toll values.
Let $A^{\tt toll}\subseteq A$ denote the set of toll arcs, i.e.,
arcs $a\in A$ for which $\tau_a^{\tt up}=\max_{\tau\in\mathcal{T}}\tau_a>0$,
and let $A\setminus A^{\tt toll}$ denote the set of toll-free arcs.
For each commodity $k\in\mathcal K$, let
$\mathcal P_k^{0}\subseteq \mathcal P_k$
denote the set of $(o_k,s_k)$-paths containing no toll arc,
and assume that $\mathcal P_k^{0}\neq\emptyset$ for all $k\in\mathcal K$.

For each commodity $k$, define

\[
\pi_k^\infty
=
\min_{p\in\mathcal P_k^{0}}
\sum_{a\in p}
\left(
\alpha_a\, f_a^{\tt up}
+
\beta_a
\right),
\]
where $f_a^{\tt up}$ is the arc-flow upper bound defined in~\eqref{eq:fa_bounds}.

Finally, let

\[
\tau^\infty
=
\max_{k\in\mathcal K}
\pi_k^\infty.
\]

\begin{proposition}
\label{prop:toll_upper_bound}
Consider problem~\eqref{eq:BW} with the profit-maximization objective
$F(f,\tau)=\sum_{a\in A}f_a\tau_a$.
Under the assumption that $\mathcal P_k^{0}\neq\emptyset$ for every
commodity $k\in\mathcal K$,
problem~\eqref{eq:BW} admits an optimal solution $(\tau,f)$ whose toll vector
satisfies
\[
\tau_a
\le
\tau_a^\infty:=\max(0,\tau^\infty-\beta_a), 
\qquad a\in A^{\tt toll}.
\]
In particular, the toll set $\mathcal{T}$ can be assumed to be bounded
without loss of optimality.
\end{proposition}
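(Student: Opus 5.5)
Starting from an arbitrary optimal solution $(\tau,f)$ of \eqref{eq:BW} (under the optimistic convention), I will build another optimal solution in which every toll arc satisfies $\tau_a\le\tau_a^\infty$. The new toll vector is the componentwise truncation
\[
\tau'_a=\min(\tau_a,\tau_a^\infty),\quad a\in A^{\tt toll},\qquad \tau'_a=\tau_a=0,\quad a\notin A^{\tt toll}.
\]
Since $0\le\tau'\le\tau$, downward closedness gives $\tau'\in\mathcal{T}$. The flow is kept unchanged: I will show that $f$ is still a Wardrop equilibrium for $\tau'$ and that the revenue does not change.

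The key observation is that an arc with $\tau_a>\tau_a^\infty$ carries no flow at equilibrium. First I need a bound on the equilibrium shortest-path costs. The equilibrium arc flows satisfy $f_a\le f_a^{\tt up}$; this holds after discarding flow on circuits, which is possible since costs are nonnegative, exactly as argued before \eqref{eq:fa_bounds}. Pick a toll-free path $p_0\in\mathcal P_k^0$ attaining $\pi_k^\infty$. Its effective cost is $\sum_{a\in p_0}(\alpha_af_a+\beta_a)\le\pi_k^\infty\le\tau^\infty$, so $\pi_k\le\tau^\infty$ for every $k$.

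Now take any path $p$ through an arc $a$ with $\tau_a>\tau_a^\infty\ge\tau^\infty-\beta_a$. Because all other terms are nonnegative,
\[
\widehat c_{p,k}\ge\beta_a+\tau_a>\tau^\infty\ge\pi_k,
\]
so $p$ is not a shortest path and, by \eqref{eq:wardrop_syst1_3}, $f_{p,k}=0$. Hence $f_a=0$ on every truncated arc, and therefore $\sum_a f_a\tau'_a=\sum_a f_a\tau_a$.

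It remains to check that $f$ is still an equilibrium under $\tau'$. Paths that avoid the truncated arcs keep their costs, so the minimum over them is still $\pi_k$, and flow-carrying paths still cost exactly $\pi_k$. A path $p$ through a truncated arc $a$ now has cost at least $\beta_a+\tau'_a=\beta_a+\tau_a^\infty\ge\tau^\infty\ge\pi_k$, since $\tau_a^\infty=\max(0,\tau^\infty-\beta_a)$. So $\pi_k$ remains the minimum path cost, and the system \eqref{eq:wardrop_syst1} holds for $(f,\tau')$. Thus $(\tau',f)$ is feasible for \eqref{eq:BW} with the same optimal value. The final claim then follows by intersecting $\mathcal T$ with the box $\tau_a\le\tau_a^\infty$.

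The delicate step is the bound $f_a\le f_a^{\tt up}$ at the chosen equilibrium, which is needed to get $\pi_k\le\pi_k^\infty$. One must start from an optimal solution whose flow is cycle-free and avoids arcs entering $o_k$ or leaving $s_k$, as \eqref{eq:fa_bounds} requires. To obtain it, I would cancel circuits in the path decomposition. With nonnegative costs a flow-carrying shortest path can be taken elementary, and cancelling circuit flow only decreases arc flows. Under nondecreasing costs this keeps the equilibrium property only if the cancelled arcs have zero effective cost, so I would argue this step carefully. Alternatively, I would bound $f_a\le\sum_k d_k$ directly from elementary path flows: because $\mathcal P_k$ consists of elementary paths, a path from $o_k$ never uses an arc entering $o_k$ and a path to $s_k$ never uses an arc leaving $s_k$, which gives $f_a\le f_a^{\tt up}$ immediately.
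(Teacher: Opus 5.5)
Your proof is correct and follows the paper's argument. You bound $\pi_k\le\tau^\infty$ through a cheapest toll-free path using $f_a\le f_a^{\tt up}$, deduce that every arc with $\tau_a>\tau_a^\infty$ carries no flow, and lower those tolls to $\tau_a^\infty$ while keeping $f$; there are only two small differences from the paper. First, you truncate all violating arcs at once rather than one at a time. Second, your second justification of $f_a\le f_a^{\tt up}$ is more direct than the paper's cancellation of zero-cost cycles: every path in $\mathcal P_k$ is elementary, so it uses no arc of $\delta^{-}(o_k)\cup\delta^{+}(s_k)$ and each arc at most once. Keep that justification and drop the circuit-cancelling one, since it makes the bound hold for every equilibrium, so your construction applies to any feasible solution, as in the remark following the proposition.
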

\begin{proof}
There exists an optimal solution $(\tau^\star,f^\star)$
of~\eqref{eq:BW} satisfying
\[
f_a^\star
\le
f_a^{\tt up},
\qquad a\in A.
\] 

Indeed, if $f^\star$ carries positive flow along a cycle of zero
effective cost, then, since $\alpha_a,\beta_a,f_a^\star,\tau_a^\star\ge0$
for every arc $a$ on the cycle, the effective cost
$\alpha_a f_a^\star+\beta_a+\tau_a^\star=0$ forces $\tau_a^\star=0$ on
every such arc. Removing the flow along the cycle therefore preserves
both the Wardrop equilibrium conditions~\eqref{eq:wardrop_syst1} and the
leader's objective value. 
Repeating this operation for every such cycle yields an optimal solution whose
per-commodity flows decompose into elementary $(o_k,s_k)$-path flows only, and
thus satisfy the zero-flow conditions~\eqref{eq:flow_bounds}. Summing over
commodities, each routing exactly $d_k$ units, gives $f_a^\star\le f_a^{\tt up}$
as defined in~\eqref{eq:fa_bounds}. 
We henceforth fix such an optimal solution $(\tau^\star,f^\star)$.

Hence, for every non-toll arc $a$,
\[
c_a(f_a^\star)
=
\alpha_a f_a^\star+\beta_a
\le
\alpha_a\, f_a^{\tt up}
+\beta_a.
\]
It follows that, for every commodity $k\in\mathcal K$, the cheapest
toll-free $(o_k,s_k)$-path has effective cost at most $\pi_k^\infty$
under $\tau^\star$, and hence the shortest-path cost satisfies
$\pi_k^\star\le \pi_k^\infty\le\tau^\infty$.

Now suppose that some toll arc $a\in A^{\tt toll}$ satisfies
$\tau_a^\star>\max(0,\,\tau^\infty-\beta_a)$.
Since arc costs are nonnegative, the effective cost of any
$(o_k,s_k)$-path $p$ containing arc $a$ satisfies
\[
\widehat{c}_{p,k}(f^\star,\tau^\star)
\ge
\beta_a + \tau_a^\star
> \max(\beta_a,\,\tau^\infty)
\ge
\tau^\infty
\ge
\pi_k^\star
\]
for every commodity $k$.
Hence every path containing arc $a$ is strictly more expensive than the
shortest-path cost, so the Wardrop conditions imply $f_a^\star=0$.

Define $\tau^{\rm new}$ by setting
$\tau_a^{\rm new}=\max(0,\,\tau^\infty-\beta_a)$
and $\tau_{a'}^{\rm new}=\tau_{a'}^\star$ for all $a'\neq a$.
Since $0\le\tau_a^{\rm new}\le\tau_a^\star$ and $\mathcal{T}$ is
downward closed, $\tau^{\rm new}\in\mathcal{T}$.
We show that $(\tau^{\rm new},f^\star)$ is feasible for~\eqref{eq:BW}
and achieves the same objective value as $(\tau^\star,f^\star)$.

Since only $\tau_a$ was decreased, the cost of every path not containing
$a$ is unchanged, while the cost of every path containing $a$ can only
decrease. In particular, every path $p\in\mathcal{P}_k$ with
$f_{p,k}^\star>0$ avoids arc $a$ (since $f_a^\star=0$) and thus retains
its cost $\pi_k^\star$. Moreover, every path containing $a$ has cost at
least $\beta_a+\tau_a^{\rm new}=\max(\beta_a,\,\tau^\infty)\ge\pi_k^\star$
under $\tau^{\rm new}$. Hence $\pi_k^\star$ remains the shortest-path
cost under $\tau^{\rm new}$, and $f^\star$ still satisfies the Wardrop
conditions~\eqref{eq:wardrop_syst1}, so that
$f^\star\in\mathcal{W}(\tau^{\rm new})$.

Since $f_a^\star=0$, the leader's objective satisfies
$F(f^\star,\tau^{\rm new})
=\sum_{a'\in A}f_{a'}^\star\tau_{a'}^{\rm new}
=\sum_{a'\in A}f_{a'}^\star\tau_{a'}^\star
=F(f^\star,\tau^\star)$.
Therefore $(\tau^{\rm new},f^\star)$ is a feasible solution
of~\eqref{eq:BW} achieving the optimal value, and is thus itself optimal.

Repeating this operation for every toll arc violating the bound yields an
optimal solution satisfying
\[
\tau_a
\le
\max\left(0,\,\tau^\infty-\beta_a\right),
\qquad a\in A^{\tt toll}.
\]
This proves the result.
\end{proof}
\begin{remark}
The argument above does not use the optimality of $(\tau^\star,f^\star)$: applied to
any feasible solution $(\tau,f)$ of~\eqref{eq:BW}, it yields a feasible solution with
the same objective value whose tolls satisfy $\tau_a\le\tau_a^\infty$ for all
$a\in A^{\tt toll}$. 
Hence, under the assumption that $\mathcal P_k^{0}\neq\emptyset$ for every
$k\in\mathcal K$, the compactness assumption on $\mathcal{T}$ made in the
introduction can be relaxed: when $\mathcal{T}$ is closed, convex and downward
closed but not necessarily bounded, the supremum of~\eqref{eq:BW} over
$\mathcal{T}$ equals its supremum over the compact set
$\mathcal{T}\cap\{\tau\colon \tau_a\le\tau_a^\infty,\ a\in A^{\tt toll}\}$.
\end{remark}
\subsection{Default variable bounds}
\label{sec:var_bounds}

Throughout this subsection and Section~\ref{sec:beckmann_bound}, we assume that the toll
set $\mathcal{T}$ is bounded, so that $\tau_a^{\tt up}:=\max_{\tau\in\mathcal{T}}\tau_a$ is
finite for every $a\in A$. This entails no loss of optimality: by
Proposition~\ref{prop:toll_upper_bound}, an unbounded toll set can be replaced, without
affecting the optimal value, by its intersection with the bounds
$\tau_a\le\max(0,\tau^\infty-\beta_a)$. We also use the arc-flow upper bound
$f_a^{\tt up}$ defined in~\eqref{eq:fa_bounds}, which holds without loss of optimality
(see the proof of Proposition~\ref{prop:toll_upper_bound}).

All formulations of Section~\ref{sec:slr} that involve commodity-specific
arc-flow variables are equipped, by default, with the native bounds of the
formulation~\eqref{eq:form_bls} of~\cite{BLS_2024}. 
First, the zero-flow 
conditions~\eqref{eq:flow_bounds} are imposed for every commodity. Second, for
every arc $a\in A$ and commodity $k\in\mathcal K$,
\begin{equation}
\label{eq:fak_ub}
0\le f_{a,k}\le d_k .
\end{equation}
Together
with~\eqref{eq:flow_bounds}, these bounds yield the arc-flow bound
$0\le f_a\le f_a^{\tt up}$ of~\eqref{eq:fa_bounds}. 

\begin{remark}
Tighter upper bounds on the arc flows could in principle be computed by
maximizing $f_a$ over the feasible set of~\eqref{eq:form_bls} for each arc
$a\in A$. Since the objective $\sum_{a\in A}f_a\tau_a$ is then replaced by the
linear objective $f_a$, each such subproblem is a mixed-integer \emph{linear}
program. Preliminary experiments, however, showed that the computational cost
of solving these subproblems outweighs the benefit of the resulting tighter
bounds, especially on the larger instances. We therefore do not pursue this
approach in our computational study.
\end{remark}

The shortest-path variables $\pi_k$ are 
bounded by
\begin{equation}
\label{eq:bounds_pik}
0
\le
\pi_k
\le
\min_{p\in\mathcal P_k}
\sum_{a\in p}
\left(
\alpha_a f_a^{\tt up}
+
\beta_a
+
\tau_a^{\tt up}
\right),
\qquad
k\in\mathcal K,
\end{equation}
the right-hand side upper-bounding the effective cost of the shortest
$(o_k,s_k)$-path obtained by replacing each arc flow and toll by $f_a^{\tt up}$
and $\tau_a^{\tt up}$, respectively. 
Inequality~\eqref{eq:bounds_pik} plays, for the shortest-path value
$\pi_k$, the role that the node-potential bounds~\eqref{eq:bounds_tik} on the
variables $t_{i,k}$ play in (BLS). Like the flow bounds above, these bounds
are imposed in every 
formulation carrying the corresponding variables 
for all our computational experiments.

\subsection{A Beckmann-based upper bound on the optimal value}
\label{sec:beckmann_bound}

The convex structure of Beckmann's formulation yields 
an upper bound on the optimal value $z^\star$
of~\eqref{eq:BW}. 
Unlike the bounds of Section~\ref{sec:var_bounds}, this bound is not
added to the formulations in our computational experiments; it is used only 
to certify optimality gaps.

Let $\Phi(\tau)$ denote the optimal objective value of
problem~\eqref{eq:wardrop_syst_aff} for a fixed toll vector $\tau$.
Since the objective $B(f,\tau)$ is nondecreasing in $\tau$ for every fixed
$f\ge0$, the value function $\Phi$ is nondecreasing as well:
\[
\tau^1 \le \tau^2
\quad\Longrightarrow\quad
\Phi(\tau^1)\le \Phi(\tau^2).
\]

In any feasible solution $(\tau,f)$ of~\eqref{eq:BW}, the flow $f$ is a
Wardrop equilibrium and therefore minimizes $B(\cdot,\tau)$, so that
$B(f,\tau)=\Phi(\tau)$. Combined with the monotonicity of $\Phi$ and the
upper bound $\tau\le\tau^{\tt up}:=(\tau_a^{\tt up})_{a\in A}$, this yields
the valid inequality
\begin{equation}
\label{eq:Beckmann_ub}
B(f,\tau)
\;=\;
\Phi(\tau)
\;\le\;
\Phi(\tau^{\tt up}).
\end{equation}

For formulation~\eqref{eq:form_sd_arc_small},
inequality~\eqref{eq:Beckmann_ub} can be rewritten as
\begin{equation}
\label{eq:Beckmann_ub_sd}
t
+
\sum_{a\in A}
\left(
\frac{\alpha_a}{2}f_a^2
+
\beta_a f_a
\right)
\le
\Phi(\tau^{\tt up}).
\end{equation}

Moreover, since $f$ is feasible for problem~\eqref{eq:wardrop_syst_aff}
with zero tolls, we have
\[
\Phi(0)
\le
B(f,0)
=
\sum_{a\in A}
\left(
\frac{\alpha_a}{2}f_a^2
+
\beta_a f_a
\right),
\]
and we deduce the following upper bound on the leader's optimal objective value $z^\star$:
\begin{equation}
\label{eq:upper_bound}
z^\star\;\le\;\overline{z}_{\tt B}\;:=\;\Phi(\tau^{\tt up})-\Phi(0).
\end{equation}

\begin{remark}
\label{rem:beckmann_vs_product}
The bound~\eqref{eq:upper_bound} refines the elementary product bound induced
by~\eqref{eq:fa_bounds} alone. Indeed, since $\tau_a\le\tau_a^{\tt up}$ and
$f_a\le f_a^{\tt up}$ at every feasible point of~\eqref{eq:BW}, the leader's
revenue satisfies
\[
\sum_{a\in A} f_a\tau_a
\;\le\;
\sum_{a\in A} f_a^{\tt up}\,\tau_a^{\tt up}.
\]
The Beckmann-based bound~\eqref{eq:upper_bound} is 
always at least as tight as this, and is
typically much tighter. To see this, let $f^0$ be a Wardrop equilibrium at zero
tolls. Being a pointwise minimum over $f$ of the maps $\tau\mapsto B(f,\tau)$,
each affine in $\tau$, the value function $\Phi$ is concave, and any such $f^0$ is
a supergradient of $\Phi$ at $\tau=0$ (since $\nabla_\tau B(f,\tau)=f$). The
resulting first-order inequality $\Phi(\tau^{\tt up})\le\Phi(0)+\sum_{a\in A}
f_a^0\,\tau_a^{\tt up}$ gives
\[
\Phi(\tau^{\tt up})-\Phi(0)
\;\le\;
\sum_{a\in A} f_a^0\,\tau_a^{\tt up}
\;\le\;
\sum_{a\in A} f_a^{\tt up}\,\tau_a^{\tt up},
\]
the last inequality following from $f_a^0\le f_a^{\tt up}$. The product bound
treats flow and toll as independently maximizable on each arc and thus ignores
the equilibrium response: it grows linearly in the magnitude of the toll bounds,
whereas the value of~\eqref{eq:upper_bound} remains bounded, since raising the
tolls only diverts flow onto the toll-free paths guaranteed by
$\mathcal P_k^{0}\neq\emptyset$. 
\end{remark}

\subsection{Grouping commodities by destination}
\label{sec:grouping}

The arc-based formulations of Section~\ref{sec:slr} carry one flow variable
$f_{a,k}$ per arc $a$ and commodity $k$. When the arc costs do not depend on
the commodity---as is the case in \cite{BLS_2024} and our computational experiments 
---commodities sharing a common destination can be
merged without affecting the set of optimal solutions, which reduces the number
of flow variables and of the associated coupling constraints. We therefore group
commodities by destination: a grouped commodity $k$ has a single destination
$s_k$ and a set of origins $O_k$, its demand vector $b^{k}$ having entries
$(b^{k})_{v}=d_{k,v}$ for $v\in V\setminus\left\{s_k\right\}$ and $(b^{k})_{s_k}=-d_k$, 
writing $d_{k,v}$ for the demand of commodity 
$k$ issued from node $v$ (with $d_{k,v}=0$ when $v\notin O_k$ and $d_{k,v}>0$ otherwise), and 
$d_k=\sum_{o\in O_k} d_{k,o}$ the total demand. 

The default variable bounds of Section~\ref{sec:var_bounds} can be adjusted to this grouping setting. 
The destination-side condition of~\eqref{eq:flow_bounds} (no flow leaves $s_k$)
remains valid for every grouped commodity. 
The origin-side condition (no flow enters the origin) can no
longer be imposed verbatim once a commodity has several origins, since flow may
enter one origin to serve a demand of the group issued from another node. It is
replaced by the following bound: 
every arc
$a=(u,v)$ satisfies
\begin{equation}
\label{eq:fak_grouped}
f_{a,k}\ \le\
\begin{cases}
0, & u=s_k,\\[2pt]
d_k-d_{k,v}, & u\ne s_k,
\end{cases}
\qquad k\in\mathcal K.
\end{equation}
Indeed, by the argument in the proof of Proposition~\ref{prop:toll_upper_bound},
problem~\eqref{eq:BW} admits an optimal solution whose flow is a Wardrop
equilibrium whose per-commodity flows decompose into elementary
$(o_k,s_k)$-path flows. 
For a single-origin commodity, \eqref{eq:fak_grouped}
recovers the origin-side condition of~\eqref{eq:flow_bounds} and the bound
$f_{a,k}\le d_k$; for a multi-origin commodity it tightens the latter to
$d_k-d_{k,v}$ whenever $v$ is one of its origins. 
Summing~\eqref{eq:fak_grouped} over commodities gives the arc-flow
bound~\eqref{eq:fa_up_grouped}, the multi-origin counterpart
of~\eqref{eq:fa_bounds}; the derived bounds~\eqref{eq:bounds_tik}
and~\eqref{eq:bounds_pik} are then evaluated from this expression of
$f_a^{\tt up}$. 
\begin{equation}
\label{eq:fa_up_grouped}
f_a\ \le\ f_a^{\tt up}\ =\ \sum_{k\in\mathcal K\,:\ s_k\ne u}\bigl(d_k-d_{k,v}\bigr),
\qquad a=(u,v).
\end{equation} 
\section{Computational experiments}
\label{sec:computational_experiments}
\subsection{Experimental setup}
\label{sec:exp_setup}

All experiments were carried out on a machine equipped with a
13th~Gen Intel  
Core\texttrademark{} i7-1355U processor
(1.70~GHz) and 32~GB of RAM, running a 64-bit operating system.
The formulations were solved with Gurobi~13.0.2 \cite{gurobi}, exploiting its ability to
handle convex quadratic constraints as well as nonconvex bilinear terms;
the latter is enabled by setting the parameter \texttt{NonConvex} to~2.
Gurobi solves the resulting nonconvex quadratically constrained problems
using a spatial branch-and-bound framework combined with outer
approximation techniques. 
All other parameters are left at their default values. Because the
i7-1355U has a hybrid architecture (two performance and eight efficiency
cores), we fix the number of threads to~4 (\texttt{Threads}~$=4$) and pin
the solver to the four logical processors of the two performance cores, so
that every run executes on identical cores. 
Wardrop equilibria are computed by a deterministic single-threaded barrier method. 
They arise in the computation of the bounds and, for the constraint-generation formulations, 
at each iteration, where the equilibrium under the current toll vector is recomputed to turn the 
relaxation flow into a bilevel-feasible incumbent. All these computations are included in the 
reported solution times; each is solved in at most a few seconds, even on the largest (Eastern 
Massachusetts) instances. 
To further improve the
reproducibility of the reported solve times, Turbo Boost was disabled,
fixing the performance cores at their nominal base frequency of 1.70~GHz. 
The relative optimality gap tolerance is left at its default
value of $0.01\%$, so that a run is considered solved to optimality once this
tolerance is reached. As a consequence, objective values reported for
different formulations on the same instance may differ slightly.
Unless stated otherwise, a time limit of 1800~seconds is imposed on each run.

\paragraph{Instances.}
Our benchmark is built from three road networks of increasing size, all drawn from
the \texttt{TransportationNetworks} data repository \cite{TransportationNetworks}. 
Throughout
Section~\ref{sec:computational_experiments} we essentially work with a subnetwork of the Sioux
Falls network~\cite{LMP_1975}, composed of $12$ nodes and $36$ arcs and already
considered in~\cite{BLS_2024}; we turn to the two larger networks --- the
\emph{complete} Sioux Falls network ($24$ nodes, $76$ arcs) and the Eastern
Massachusetts network ($74$ nodes, $258$ arcs) --- only to assess how the
formulations scale with the topology. 
For each network we vary the number of
toll arcs in $|A^{\rm toll}|\in\{2,4,6,8,10\}$ and the number of commodities along a
nested demand ladder, where the level $|\mathcal{K}|=k$ uses the first $k$ demands in a
fixed per-network ordering, so that smaller demand levels are contained in larger ones. Further 
details are given in Appendices \ref{app:instance_Sioux}, \ref{sec:app_bignet} and \ref{sec:EAM_net}.

\paragraph{Grouping commodities by destination.}
Following Section~\ref{sec:grouping}, commodities are grouped by destination in
all the considered formulations; this is possible here because the arc costs of our
instances do not depend on the commodity. Since origins and destinations play
symmetric roles once arcs and OD pairs are reversed, we assume without loss of
generality that the number of distinct destinations does not exceed the number
of distinct origins, reversing arcs and OD pairs whenever this is not the case. 
The reduction is applied uniformly to all formulations, ensuring a fair
comparison; in particular, the formulation~\eqref{eq:form_bls} of Beck et al. is
used in its grouped-by-destination form. This form is equivalent to the original
one in terms of optimal solutions, while offering a computational advantage over
the ungrouped version that grows with instance size, so that our comparison is
not biased against~\cite{BLS_2024}. 

\paragraph{Formulations under comparison.}
We compare four formulations, which fall into two groups according to the
guarantees they provide:
\begin{itemize}
\item Two \emph{exact} formulations, which solve problem~\eqref{eq:BW} to
global optimality: the formulation of Beck et al.~\eqref{eq:form_bls} and the
arc-based strong-duality formulation~\eqref{eq:form_sd_arc_small}.
\item Two \emph{path-based} formulations, namely the path-flow
formulation~\eqref{eq:form_sd_paths_small} and the complementarity-based
formulation~\eqref{eq:form_cs}. Both involve a number of path variables and
associated constraints that grows exponentially with the network size, and
are therefore solved by column-and-constraint generation. Starting from a
restricted set of represented paths, one iteratively solves the current
restriction and searches for violated path constraints; whenever such a
constraint is found, both the constraint and the path variable associated
with the corresponding path are added to the formulation. In our
implementation, the set of paths represented in the constraints coincides
with the set of paths represented by the variables.  
The procedure terminates when no violated path inequality remains. Since the
restricted models are nonconvex, this termination certifies feasibility but not
optimality: it does not rule out the existence of a path whose addition, as a
variable, would improve the objective. The returned solution is therefore not
guaranteed to be optimal for~\eqref{eq:BW}, and these two formulations are used
as heuristics (see Section~\ref{sec:exp_heuristic}). 
\end{itemize}
 The bounds from Sections \ref{sec:toll_bounds} and 
\ref{sec:var_bounds} are always included in the formulations whenever applicable (i.e. if the bounded 
variables appear in the formulation)  for our experiments. 

\paragraph{Cost structure for toll arcs.} 
Unless stated otherwise, 
the cost structure on toll arcs
follows~\cite{BLS_2024} (see also Appendix \ref{app:instance_Sioux}), and the toll upper bound on each toll arc
$a\in A^{\tt toll}$ is set to
\[
\tau_a^{\tt up}=\min\!\left(c_a^{\tt fft},\,\tau_a^\infty\right),
\]
the smaller of the free-flow arc cost $c_a^{\tt fft}$ used as toll bound
in~\cite{BLS_2024} and the implicit bound $\tau_a^\infty$ of
Proposition~\ref{prop:toll_upper_bound}. (Here $c_a^{\tt fft}$ is the original
free-flow cost serving as the regulatory factor, whereas the intercept
$\beta_a$ entering $\tau_a^\infty$ is that of the toll arc's cost function,
equal to $c_a^{\tt fft}/2$ unless stated otherwise.) 
By Proposition~\ref{prop:toll_upper_bound}, taking this minimum is 
optimality-preserving. 
The same
bound is imposed on every formulation. 
We refer to this as the \emph{reference configuration}.
Other choices for the definition of bounds and costs on toll arcs will be investigated in 
Sections \ref{sec:exp_bounds} and \ref{sec:exp_costs}, respectively.

\subsection{Comparison of the exact formulations}
\label{sec:exp_exact}

We first compare the two exact formulations, namely the formulation
(BLS)~\eqref{eq:form_bls} of Beck et al.\ and our arc-based strong-duality
formulation (SD-A)~\eqref{eq:form_sd_arc_small}, on the reference
configuration. 
\subsubsection*{Solving (SD-A) by constraint generation}

We solve the arc-based formulation (SD-A)~\eqref{eq:form_sd_arc_small} by
generating the path inequalities~\eqref{eq:sd_arc_path_ineq} dynamically, since
their number grows exponentially with the network size. We start from a
restriction of~\eqref{eq:form_sd_arc_small} in which only a subset of the
inequalities~\eqref{eq:sd_arc_path_ineq} is present, namely those associated,
for each commodity, with a shortest path computed with respect to the effective 
arc costs $\alpha_a f_a+\beta_a+\tau_a$ of the Wardrop equilibrium at the two
toll vectors $\tau_a=0$ ($a\in A$) and $\tau_a=\tau_a^{\tt up}$ ($a\in A$). This
restriction is then solved to optimality, yielding a solution
$(\tau^\star,f^\star,\pi^\star,t^\star)$; we write
$\lambda_a^\star=\alpha_a f_a^\star+\beta_a+\tau_a^\star$ for the associated
effective arc costs, $a\in A$.

Since the inequalities~\eqref{eq:sd_arc_path_ineq} read
$\pi_k\le\sum_{a\in p}\lambda_a$, the most violated one for commodity $k$ is
found by a single shortest-path computation with respect to the costs
$\lambda_a^\star$. 
Two cases arise.

\emph{Case 1: no inequality is violated}. 
Then $(\tau^\star,f^\star,\pi^\star,t^\star)$ satisfies all
inequalities~\eqref{eq:sd_arc_path_ineq} and is thus feasible
for~\eqref{eq:form_sd_arc_small}; since the restriction is a relaxation
of~\eqref{eq:form_sd_arc_small}, it is optimal for it, so that $f^\star$ is a Wardrop
equilibrium for $\tau^\star$ and $(\tau^\star,f^\star)$ is optimal for~\eqref{eq:BW}.
The procedure stops.

\emph{Case 2: at least one inequality is violated} 
for some commodity $k$. For each such $k$ we recover a
minimizing path and add the corresponding inequality~\eqref{eq:sd_arc_path_ineq}
to the restriction, which is solved anew, and the procedure iterates.

The results are reported in Table~\ref{tab:rq1}, with the following information.
The fields $\left|\mathcal{K}\right|$, $(\#o,\#d)$, and
$\left|A^{\tt toll}\right|$ describe the instance:
\begin{itemize}
\item $\left|\mathcal{K}\right|$: number of OD pairs;
\item $(\#o,\#d)$: number of distinct origins, followed by the number of
distinct destinations, in the instance as given, i.e.\ before the possible
reversal of arcs and OD pairs described in Section~\ref{sec:exp_setup}; 
\item $\left|A^{\tt toll}\right|$: the number of toll arcs, i.e., arcs $a\in A$
with $\tau_a^{\tt{up}}>0$.
\end{itemize}
The remaining fields report, for each formulation:
\begin{itemize}
\item objective: a valid lower bound $\underline{z}$ on the optimal value
$z^\star$ of~\eqref{eq:BW}. When the instance is solved to optimality,
$\underline{z}=z^\star$. When the time limit is reached, $\underline{z}$ is a
feasible objective value for~\eqref{eq:BW}, obtained as follows. For (BLS),
which is solved as a single model, the best feasible solution returned by the
solver is already feasible for~\eqref{eq:BW}, and its objective is reported. For
(SD-A), which is solved by constraint generation, the flow of the current
restriction need not be a Wardrop equilibrium; we therefore compute a Wardrop
equilibrium for the tolls of the best feasible solution of the restriction
(by convex quadratic programming) 
and report the corresponding leader's
revenue, which is feasible for~\eqref{eq:BW}. A dash (--) indicates that no
feasible solution was found.
\item time: the total solution time in seconds; ``TL'' indicates that the time
limit was reached.
\item nodes: the number of branch-and-bound nodes explored, as reported by the
solver. For (SD-A), this is the cumulative number over all constraint-generation
iterations.
\item gap$_{\tt own}$: the relative optimality gap certified by the formulation
\emph{alone}, defined as
$\left(\overline{z}_{\tt dual}-\underline{z}\right)/\underline{z}$, where
$\overline{z}_{\tt dual}$ is the dual bound returned by the solver at termination
for that formulation. For (BLS), solved as a single model,
$\overline{z}_{\tt dual}$ is the dual bound of the full model. For (SD-A), it is
the dual bound of the last restriction solved; since that restriction is
obtained from~\eqref{eq:form_sd_arc_small} by omitting a subset of the path
inequalities~\eqref{eq:sd_arc_path_ineq}, it is a relaxation
of~\eqref{eq:form_sd_arc_small}, and its dual bound is therefore a valid upper
bound on $z^\star$. Both quantities are thus valid upper bounds on $z^\star$, and
gap$_{\tt own}$ is a valid certificate for the corresponding formulation taken in
isolation.
\item gap: a certified relative optimality gap, defined as
\[
gap=\frac{\overline{z}-\underline{z}}{\underline{z}},
\qquad 
\overline{z}=\min\!\left(\overline{z}^{\,\tt BLS}_{\tt dual},\
\overline{z}^{\,\tt SD\text{-}A}_{\tt dual},\ \overline{z}_{\tt B}\right),
\]
where $\overline{z}^{\,\tt BLS}_{\tt dual}$ and $\overline{z}^{\,\tt SD\text{-}A}_{\tt dual}$
are the dual bounds returned by the solver at termination for the exact formulations
(BLS) and (SD-A), and $\overline{z}_{\tt B}$ is the Beckmann-based upper
bound~\eqref{eq:upper_bound}. Each is a valid upper bound on $z^\star$ for the given
instance, hence so is their minimum. 

\end{itemize}
For (SD-A), we additionally report:
\begin{itemize}
\item $it$: the number of constraint-generation iterations;
\item $\#p_i$, $\#p_g$: the number of path inequalities~\eqref{eq:sd_arc_path_ineq}
present after initialization, and the number generated subsequently by
separation, respectively.
\end{itemize}

\medskip
\noindent\textbf{Main observations.}
The results highlight a marked difference in scalability between the two
formulations. Within the time limit, (BLS) solves 33 of the 55 instances and
(SD-A) solves 52. The formulation (BLS) solves every instance with up to 7
commodities and still solves four of the five instances for
$\left|\mathcal{K}\right|\in\{8,10,15\}$, but only one of five for $\left|\mathcal{K}\right|=20$ and
none for $\left|\mathcal{K}\right|\in\{25,30,132\}$; on eight instances ---
$\left|\mathcal{K}\right|=30$ with 2, 4 and 8 toll arcs, and all five instances with
$\left|\mathcal{K}\right|=132$ --- it returns no feasible solution at all within the time
budget. In contrast, (SD-A) returns a feasible solution on every instance and
solves all five instances with $\left|\mathcal{K}\right|=132$, in between 5.4 and 36.0
seconds. On the 32 instances solved by both formulations, (SD-A) is faster on 20, with a
median solution time of 7.4~s against 18.4~s for (BLS) and a median speed-up of
$1.5\times$. This advantage is not uniform: on the instance $\left|\mathcal{K}\right|=8$
with 6 toll arcs, (SD-A) is markedly slower, taking 1268~s against 874~s for
(BLS), and this single instance accounts for most of its cumulative time
(1817~s, against 3790~s for (BLS)).

The two gap columns reveal a second, more qualitative difference. On every
instance solved to optimality, each formulation certifies its own solution:
gap$_{\tt own}$ is then at most $0.01\%$, i.e.\ the solver tolerance. On the
time-limited runs, however, the two formulations behave in opposite ways. For
(BLS), gap$_{\tt own}$ ranges from $0.051\%$ to $111.4\%$, with a median of
$21.3\%$, whereas the pooled gap never exceeds $0.881\%$ and is at most $0.01\%$
on 12 of the 14 such runs: (BLS) frequently reaches an optimal, or near-optimal,
solution, but its own dual bound remains too loose to certify it. Its
difficulty on these instances is therefore one of certification rather than of
primal search. For (SD-A), on the contrary, gap$_{\tt own}$ coincides with the
pooled gap on two of its three time-limited runs --- $\left|\mathcal{K}\right|=8$ and
$\left|\mathcal{K}\right|=30$, both with 10 toll arcs, at $0.985\%$ and $0.281\%$ --- and
pooling improves it only on the third, $\left|\mathcal{K}\right|=7$ with 10 toll arcs, from
$0.023\%$ to $0.010\%$. The strong-duality formulation is thus essentially
self-certifying: pooling the dual bounds improves its gaps only marginally, so
that the gaps reported for (SD-A) are not an artefact of borrowing bounds from
(BLS).

The constraint-generation procedure terminates in a handful of iterations. The
shortest-path initialization provides a single path per commodity for each of
the two reference toll vectors $\tau=0$ and $\tau=\tau^{\tt up}$; being
deliberately sparse, it leaves at least one violated inequality on every
instance of the benchmark, so that separation is triggered throughout. The exact
separation nonetheless terminates the procedure quickly: at most 11 path
inequalities are generated on any instance, in at most 5 iterations. As each
separation round amounts to one reverse-Dijkstra computation per distinct
destination, the cost of (SD-A) is dominated by the solution of the successive
restrictions --- each a nonconvex quadratically constrained program --- rather
than by the separation itself.

\medskip
We emphasize that (SD-A) does not dominate (BLS) uniformly. On $\left|\mathcal{K}\right|=7$
with 10 toll arcs, (BLS) is solved within the time limit (in $1739.1$~s) while
(SD-A) reaches it; on $\left|\mathcal{K}\right|=8$ with 6 toll arcs, both formulations solve
the instance to optimality, but (BLS) is the faster of the two ($874.3$~s against
$1268.4$~s); and on $\left|\mathcal{K}\right|=8$ with 10 toll arcs, where both reach the
time limit, (BLS) returns the better feasible solution ($38\,825.91$ against
$38\,785.82$, an advantage of about $0.1\%$). Conversely, (SD-A) is the stronger
method on the  
$\left|\mathcal{K}\right|=30$ instances: with 8 toll arcs it solves the
instance to optimality (in $439$~s) whereas (BLS) finds no feasible solution,
and with 10 toll arcs, where both time out, it returns the better feasible
solution ($107\,221.60$ against $107\,075.08$). Moreover, the three instances on
which (SD-A) reaches the time limit (highlighted in bold in Table \ref{tab:rq1}) 
are not the largest ones: (SD-A) solves
$\left|\mathcal{K}\right|=132$ with 10 toll arcs in $27.7$~s, exploring only $225$
branch-and-bound nodes, whereas $\left|\mathcal{K}\right|=8$ with 10 toll arcs requires more
than $3.7$ million nodes without closing the gap. The relative performance of the
two formulations, and the difficulty of an instance for (SD-A), therefore depend
on the structure of the instance rather than on its size alone.

\begin{sidewaystable}
\caption{\label{tab:rq1} Comparison of the exact formulations (BLS) and (SD-A)
(time limit TL${}=1800$\,s). The ``objective'' field reports a valid lower bound
$\underline{z}$ on $z^\star$; ``gap$_{\tt own}$'' the gap certified by the
formulation alone, and ``gap'' the certified gap
$(\overline{z}-\underline{z})/\underline{z}$ obtained with the pooled
instance-level bound $\overline{z}$; see the field descriptions above for
details. A dash (--) indicates that no feasible solution was found.}
\tiny
\centering
\begin{tabular}{|c|c|c||c|c|c|c|c||c|c|c|c|c|c|c|c|}
\hline
\multirow{2}{*}{$\left|\mathcal{K}\right|$} &
\multirow{2}{*}{$(\#o,\#d)$} &
\multirow{2}{*}{$\left|A^{\tt toll}\right|$} &
\multicolumn{5}{|c||}{${\tt{(BLS)}}$} &
\multicolumn{8}{|c|}{${\tt{(SD\text{-}A)}}$}
\\
\cline{4-16}
& & & objective & time & nodes & gap$_{\tt own}$ (\%) & gap (\%)
& objective & time & $\#p_i$ & $\#p_g$ & nodes & it & gap$_{\tt own}$ (\%) & gap (\%)\\
\hline
\multirow{5}{*}{4} & \multirow{5}{*}{(4,3)}
& 2  & 11\,787.08 & 2.7 & 5\,007 & 0.010 & 0.000 & 11\,787.08 & 3.6 & 4 & 2 & 3 & 3 & 0.000 & 0.000 \\
& & 4  & 16\,287.08 & 2.7 & 3\,944 & 0.008 & 0.000 & 16\,287.08 & 4.7 & 4 & 2 & 3 & 3 & 0.000 & 0.000 \\
& & 6  & 18\,987.08 & 1.8 & 982 & 0.000 & 0.000 & 18\,987.08 & 4.7 & 4 & 2 & 3 & 3 & 0.010 & 0.000 \\
& & 8  & 23\,870.26 & 6.8 & 1\,780 & 0.004 & 0.004 & 23\,870.26 & 6.7 & 5 & 1 & 2\,921 & 2 & 0.009 & 0.004 \\
& & 10  & 26\,212.70 & 24.1 & 34\,751 & 0.007 & 0.003 & 26\,212.69 & 3.7 & 5 & 4 & 961 & 3 & 0.003 & 0.003 \\
\hline
\multirow{5}{*}{5} & \multirow{5}{*}{(5,4)}
& 2  & 11\,787.08 & 1.2 & 1\,083 & 0.000 & 0.000 & 11\,787.08 & 5.6 & 5 & 2 & 3 & 3 & 0.000 & 0.000 \\
& & 4  & 16\,287.08 & 1.2 & 15 & 0.000 & 0.000 & 16\,287.08 & 5.1 & 5 & 2 & 3 & 3 & 0.000 & 0.000 \\
& & 6  & 18\,987.08 & 9.7 & 14\,056 & 0.000 & 0.000 & 18\,987.08 & 4.7 & 5 & 2 & 3 & 3 & 0.008 & 0.000 \\
& & 8  & 23\,870.26 & 8.6 & 2\,177 & 0.001 & 0.001 & 23\,870.26 & 8.1 & 6 & 1 & 4\,069 & 2 & 0.009 & 0.001 \\
& & 10  & 26\,603.80 & 18.4 & 22\,183 & 0.007 & 0.000 & 26\,603.80 & 5.2 & 7 & 4 & 1\,314 & 3 & 0.000 & 0.000 \\
\hline
\multirow{5}{*}{6} & \multirow{5}{*}{(6,5)}
& 2  & 11\,743.58 & 3.5 & 1\,988 & 0.003 & 0.003 & 11\,743.58 & 5.8 & 6 & 5 & 4 & 4 & 0.009 & 0.003 \\
& & 4  & 19\,199.33 & 5.9 & 4\,099 & 0.005 & 0.005 & 19\,199.33 & 20.3 & 6 & 6 & 8\,151 & 5 & 0.008 & 0.005 \\
& & 6  & 22\,799.33 & 6.4 & 1\,509 & 0.007 & 0.005 & 22\,799.33 & 22.9 & 6 & 6 & 2\,045 & 5 & 0.005 & 0.005 \\
& & 8  & 30\,343.02 & 59.8 & 53\,522 & 0.006 & 0.006 & 30\,343.01 & 26.3 & 8 & 4 & 24\,142 & 3 & 0.010 & 0.006 \\
& & 10  & 39\,619.54 & 118.3 & 143\,176 & 0.009 & 0.009 & 39\,619.54 & 178.1 & 8 & 4 & 137\,951 & 3 & 0.010 & 0.009 \\
\hline
\multirow{5}{*}{7} & \multirow{5}{*}{(7,6)}
& 2  & 8\,593.37 & 4.4 & 1\,744 & 0.004 & 0.004 & 8\,593.37 & 6.3 & 8 & 3 & 252 & 3 & 0.009 & 0.004 \\
& & 4  & 19\,477.39 & 15.2 & 12\,037 & 0.004 & 0.004 & 19\,477.39 & 17.5 & 8 & 5 & 790 & 5 & 0.005 & 0.004 \\
& & 6  & 22\,177.40 & 26.5 & 10\,505 & 0.010 & 0.007 & 22\,177.39 & 17.8 & 8 & 5 & 616 & 5 & 0.007 & 0.007 \\
& & 8  & 28\,931.57 & 332.9 & 385\,594 & 0.010 & 0.010 & 28\,931.56 & 26.9 & 9 & 4 & 10\,191 & 4 & 0.010 & 0.010 \\
& & 10  & \textbf{31\,005.14} & \textbf{1739.1} & \textbf{1\,391\,779} & \textbf{0.010} & \textbf{0.010} &\textbf{ 31\,005.08} &\textbf{ TL} &\textbf{ 9} &\textbf{ 3} & \textbf{820\,390} &\textbf{ 3} &\textbf{ 0.023} &\textbf{ 0.010} \\
\hline
\multirow{5}{*}{8} & \multirow{5}{*}{(8,7)}
& 2  & 8\,612.07 & 7.6 & 1\,903 & 0.008 & 0.005 & 8\,612.07 & 5.0 & 9 & 4 & 3 & 3 & 0.005 & 0.005 \\
& & 4  & 18\,540.90 & 27.7 & 14\,589 & 0.007 & 0.007 & 18\,540.89 & 8.2 & 11 & 2 & 1\,255 & 2 & 0.009 & 0.007 \\
& & 6  & 26\,508.83 & 874.3 & 656\,144 & 0.009 & 0.009 & 26\,508.83 & 1268.4 & 9 & 5 & 2\,714\,674 & 3 & 0.010 & 0.009 \\
& & 8  & 29\,508.83 & 359.3 & 292\,940 & 0.010 & 0.010 & 29\,508.83 & 41.3 & 9 & 5 & 22\,025 & 3 & 0.010 & 0.010 \\
& & 10  & \textbf{38\,825.91} & \textbf{TL} & \textbf{579\,276} & \textbf{6.695} & \textbf{0.881} & \textbf{38\,785.82} & \textbf{TL} & \textbf{10} & \textbf{3}& \textbf{3\,723\,867 }& \textbf{3} & \textbf{0.985} & \textbf{0.985} \\
\hline\hline
\multirow{5}{*}{10} & \multirow{5}{*}{(10,9)}
& 2  & 11\,055.09 & 11.2 & 3\,929 & 0.008 & 0.003 & 11\,055.09 & 8.7 & 12 & 3 & 39 & 3 & 0.003 & 0.003 \\
& & 4  & 17\,863.25 & 136.1 & 214\,345 & 0.010 & 0.006 & 17\,863.25 & 10.6 & 12 & 3 & 2\,487 & 3 & 0.006 & 0.006 \\
& & 6  & 30\,245.72 & 18.3 & 1\,813 & 0.004 & 0.004 & 30\,245.72 & 12.1 & 13 & 2 & 2\,073 & 2 & 0.009 & 0.004 \\
& & 8  & 37\,612.33 & TL & 784\,915 & 0.535 & 0.009 & 37\,612.33 & 37.1 & 13 & 5 & 2\,088 & 3 & 0.009 & 0.009 \\
& & 10  & 40\,612.33 & 366.7 & 159\,702 & 0.010 & 0.005 & 40\,612.33 & 47.9 & 13 & 5 & 2\,789 & 3 & 0.005 & 0.005 \\
\hline
\multirow{5}{*}{15} & \multirow{5}{*}{(12,10)}
& 2  & 14\,000.00 & 72.7 & 29\,500 & 0.009 & 0.008 & 14\,000.00 & 4.9 & 15 & 5 & 3 & 3 & 0.008 & 0.008 \\
& & 4  & 28\,362.90 & 88.5 & 12\,455 & 0.010 & 0.007 & 28\,362.90 & 6.2 & 15 & 5 & 3 & 3 & 0.007 & 0.007 \\
& & 6  & 39\,606.76 & 139.0 & 20\,978 & 0.000 & 0.000 & 39\,606.76 & 6.2 & 17 & 4 & 3 & 3 & 0.010 & 0.000 \\
& & 8  & 49\,379.93 & 847.3 & 414\,651 & 0.010 & 0.007 & 49\,379.91 & 16.6 & 17 & 4 & 968 & 3 & 0.007 & 0.007 \\
& & 10  & 61\,791.00 & TL & 474\,673 & 0.445 & 0.003 & 61\,791.00 & 20.7 & 18 & 6 & 268 & 4 & 0.003 & 0.003 \\
\hline
\multirow{5}{*}{20} & \multirow{5}{*}{(12,10)}
& 2  & 29\,695.88 & TL & 614\,968 & 0.051 & 0.003 & 29\,695.88 & 7.7 & 20 & 6 & 3 & 3 & 0.003 & 0.003 \\
& & 4  & 45\,359.25 & TL & 239\,884 & 1.035 & 0.009 & 45\,359.25 & 6.5 & 20 & 6 & 3 & 3 & 0.009 & 0.009 \\
& & 6  & 54\,452.42 & 191.1 & 21\,242 & 0.005 & 0.001 & 54\,452.41 & 6.7 & 20 & 7 & 3 & 3 & 0.001 & 0.001 \\
& & 8  & 65\,252.42 & TL & 168\,891 & 20.494 & 0.000 & 65\,252.42 & 21.1 & 20 & 7 & 17 & 3 & 0.000 & 0.000 \\
& & 10  & 78\,057.43 & TL & 115\,784 & 44.854 & 0.001 & 78\,057.41 & 11.4 & 22 & 6 & 11 & 3 & 0.001 & 0.001 \\
\hline
\multirow{5}{*}{25} & \multirow{5}{*}{(12,11)}
& 2  & 27\,009.92 & TL & 138\,625 & 12.603 & 0.009 & 27\,010.26 & 10.4 & 27 & 5 & 8 & 3 & 0.008 & 0.008 \\
& & 4  & 54\,577.76 & TL & 345\,202 & 22.027 & 0.010 & 54\,577.74 & 9.7 & 26 & 7 & 3 & 3 & 0.010 & 0.010 \\
& & 6  & 66\,477.74 & TL & 161\,183 & 29.256 & 0.010 & 66\,477.74 & 13.0 & 26 & 8 & 3 & 3 & 0.010 & 0.010 \\
& & 8  & 83\,277.74 & TL & 184\,512 & 36.067 & 0.009 & 83\,277.74 & 10.3 & 26 & 9 & 9 & 3 & 0.009 & 0.009 \\
& & 10  & 98\,674.12 & TL & 113\,948 & 47.880 & 0.009 & 98\,674.11 & 26.5 & 27 & 4 & 2\,534 & 2 & 0.009 & 0.009 \\
\hline
\multirow{5}{*}{30} & \multirow{5}{*}{(12,12)}
& 2  & -- & TL & 112\,931 & -- & -- & 26\,666.18 & 11.0 & 33 & 11 & 4 & 4 & 0.010 & 0.010 \\
& & 4  & -- & TL & 106\,703 & -- & -- & 39\,102.74 & 47.8 & 36 & 5 & 19\,428 & 2 & 0.008 & 0.008 \\
& & 6  & 56\,712.73 & TL & 133\,978 & 111.374 & 0.010 & 56\,712.73 & 80.9 & 37 & 5 & 30\,523 & 2 & 0.010 & 0.010 \\
& & 8  & -- & TL & 107\,598 & -- & -- & 85\,373.21 & 439.0 & 37 & 6 & 76\,447 & 2 & 0.010 & 0.010 \\
& & 10  & \textbf{107\,075.08} & \textbf{TL} & \textbf{124\,470} & \textbf{87.170} & \textbf{0.418} & \textbf{107\,221.60} &\textbf{ TL} & \textbf{36} & \textbf{6} & \textbf{2\,292\,726} &\textbf{ 2} &\textbf{ 0.281} & \textbf{0.281} \\
\hline
\multirow{5}{*}{132} & \multirow{5}{*}{(12,12)}
& 2  & -- & TL & 40\,796 & -- & -- & 97\,020.71 & 6.7 & 138 & 4 & 2 & 2 & 0.009 & 0.009 \\
& & 4  & -- & TL & 41\,086 & -- & -- & 134\,696.98 & 5.4 & 138 & 4 & 2 & 2 & 0.006 & 0.006 \\
& & 6  & -- & TL & 48\,997 & -- & -- & 206\,690.32 & 19.8 & 138 & 6 & 55 & 3 & 0.000 & 0.000 \\
& & 8  & -- & TL & 42\,353 & -- & -- & 279\,290.32 & 36.0 & 137 & 6 & 149 & 3 & 0.002 & 0.002 \\
& & 10  & -- & TL & 51\,818 & -- & -- & 335\,336.97 & 27.7 & 141 & 2 & 225 & 2 & 0.000 & 0.000 \\
\hline
\end{tabular}
\end{sidewaystable}
\subsection{Impact of the toll bounds}
\label{sec:exp_bounds}

In the reference configuration of Section~\ref{sec:exp_setup}, the toll upper bound is
$\tau_a^{\tt up}=\min(c_a^{\tt fft},\tau_a^\infty)$, the smaller of the regulatory cap
$c_a^{\tt fft}$ used in~\cite{BLS_2024} and the implicit bound $\tau_a^\infty$ of
Proposition~\ref{prop:toll_upper_bound}. We now assess how the magnitude of this bound
affects tractability, by comparing it with a tighter and a looser choice:
\begin{equation}
\label{eq:toll_ub_types}
\tau_a^{\tt up}\in\Bigl\{\ \tfrac{1}{2}\min\!\left(c_a^{\tt fft},\tau_a^\infty\right),\quad
\min\!\left(c_a^{\tt fft},\tau_a^\infty\right),\quad \tau_a^\infty\ \Bigr\},
\qquad a\in A^{\tt toll}.
\end{equation}
The intermediate setting is the reference configuration, used in
Section~\ref{sec:exp_exact} and in the remainder of the paper; it keeps the toll set
$\mathcal{T}$ bounded. The loosest one retains only the endogenous bound
$\tau_a^\infty$. The tightest one halves the reference bound.

Taking the minimum with $\tau_a^\infty$ is never weaker than the regulatory cap alone,
and is strictly tighter whenever $\tau_a^\infty<c_a^{\tt fft}$; on the instances
considered here, however, this never occurs, so that the reference bound coincides with
the cap $c_a^{\tt fft}$ of~\cite{BLS_2024} on all 
toll arcs. The three settings
of~\eqref{eq:toll_ub_types} may therefore be read, on this benchmark, as half the
regulatory cap, the regulatory cap, and no regulatory cap.

On the $23$ instances solved to optimality under all three settings, the optimal value
$z^\star$ increases by a median factor of $2.34$ --- and by up to $6$ --- from the
tightest to the loosest bound.

\begin{table}[h!]
\caption{\label{tab:rq_tollsummary} Effect of the toll upper bound~\eqref{eq:toll_ub_types}
on the tractability of the exact formulations (time limit TL${}=1800$\,s, $55$ instances).
``solved'' counts instances solved to proven optimality; ``no sol.'' counts instances for
which no feasible solution was found; ``time'' is the shifted geometric mean of the
solution time (shift $10$\,s, unsolved runs counted at the time limit). Per-instance
results are reported in Tables~\ref{tab:rq1}, \ref{tab:rq_toll0} and~\ref{tab:rq_toll2}.}
\centering
\small
\begin{tabular}{|c||c|c|c||c|c|c|}
\hline
\multirow{2}{*}{$\tau_a^{\tt up}$}
& \multicolumn{3}{c||}{${\tt (BLS)}$} & \multicolumn{3}{c|}{${\tt (SD\text{-}A)}$}\\
\cline{2-7}
& solved & no sol. & time (s) & solved & no sol. & time (s)\\
\hline\hline
$\tfrac{1}{2}\min(c_a^{\tt fft},\tau_a^\infty)$ & $41/55$ & $7$  & $60.5$   & $55/55$ & $0$ & $5.1$   \\
\hline
$\min(c_a^{\tt fft},\tau_a^\infty)$             & $33/55$ & $8$  & $206.1$  & $52/55$          & $0$ & $24.5$           \\
\hline
$\tau_a^\infty$                                 & $8/55$  & $14$ & $1149.2$ & $23/55$          & $0$ & $403.9$          \\
\hline
\end{tabular}
\end{table}

\noindent\textbf{Tractability.}
Table~\ref{tab:rq_tollsummary} shows a clear, monotone effect: the looser the toll bound,
the harder the instances, for both formulations. As the bound widens from
$\tfrac{1}{2}\min(c_a^{\tt fft},\tau_a^\infty)$ to $\tau_a^\infty$, the number of instances
solved by (SD-A) falls from all $55$ to $23$, and by (BLS) from $41$ to $8$, while the
shifted geometric mean of the solution time grows by more than an order of magnitude.
This is consistent with the spatial branch-and-bound applied to the bilinear terms
$f_a\tau_a$: a wider toll range enlarges the box over which the McCormick
relaxation~\cite{MCC_1976} is built, so that more branching is required to close the gap.
Across all three settings, (SD-A) dominates (BLS), solving from $1.3$ times as many
instances under the tightest bound to close to three times as many under the loosest, and
remaining between $3$ and $12$ times faster in shifted geometric mean; (SD-A) always
returns a feasible solution, whereas (BLS) fails to do so on up to $14$ instances under the
loosest bound. Under the tightest bound, (SD-A) solves every instance, all but two of them
with at most $100$ branch-and-bound nodes (see Table~\ref{tab:rq_toll0}).

\noindent\textbf{Gaps at the time limit.}
The residual gaps confirm the reading of Section~\ref{sec:exp_exact} and extend it across
regimes. On the instances that one formulation solves but the other does not, the failing
formulation stays within a few hundredths of a percent of optimality --- at most $0.054\%$
for (BLS), with a mean of $0.007\%$ over $31$ such instances, and at most $0.010\%$ for
(SD-A) over $2$ --- so that these failures are again of certification rather than of primal
search. On the instances left unsolved by \emph{both} formulations, the picture depends on
the regime. Under the tightest bound there are none; under the reference bound there are two
--- $\left|\mathcal{K}\right|=8$ and $\left|\mathcal{K}\right|=30$, both with $10$ toll arcs --- with pooled gaps
of at most $0.985\%$. 
Under $\tau_a^\infty$, by contrast, $31$ instances are left unsolved by both; on the
$21$ of them on which (BLS) returns a feasible solution, the pooled gaps average about
$20\%$ for (BLS) and $38\%$ for (SD-A), and exceed $100\%$ on the hardest instances.
The genuinely hard core of the problem thus emerges only once the regulatory cap is
removed; there, on these $21$ instances, the feasible solutions returned by (BLS) are
markedly better than those of (SD-A) (a mean certified gap of $20.4\%$ against
$37.6\%$), even though (SD-A) solves far more instances overall and is the only one to
return a feasible solution on the remaining $10$.

\subsection{Impact of the cost structure on toll arcs}
\label{sec:exp_costs}

In the reference configuration of Section~\ref{sec:exp_setup}, the cost function of a
toll arc $a\in A^{\tt toll}$ follows~\cite{BLS_2024}: both coefficients read from the
instance file are halved, so that its slope is $\alpha_a/2$ and its intercept is
$\beta_a=c_a^{\tt fft}/2$. We now assess the effect of this choice by retaining instead
the original coefficients, i.e.\ slope $\alpha_a$ and intercept
$\beta_a=c_a^{\tt fft}$. All other settings, including the toll upper bound
$\tau_a^{\tt up}=\min(c_a^{\tt fft},\tau_a^\infty)$, are those of the reference
configuration; only the cost of the toll arcs changes.

Retaining the original cost coefficients lowers the optimal revenue --- the median ratio
is $0.74$ over the $44$ instances solved under both settings, with extreme values $0.56$
and $0.96$ --- while making the instances harder for both formulations. The revenue drop
is expected: since every commodity retains a toll-free alternative, increasing the
un-tolled cost of the toll arcs reduces the margin available to the leader. Note that the
toll upper bound itself is unchanged here: on every toll arc of the benchmark
$\tau_a^\infty>c_a^{\tt fft}$, so that $\tau_a^{\tt up}=c_a^{\tt fft}$ under both cost
structures. 
\begin{table}[h!]
\caption{\label{tab:rq_costsummary} Effect of the cost structure on toll arcs on the
tractability of the exact formulations (time limit TL${}=1800$\,s, $55$ instances).
Columns are as in Table~\ref{tab:rq_tollsummary}: ``solved'' counts instances solved to
proven optimality, ``no sol.'' those for which no feasible solution was found, and
``time'' is the shifted geometric mean of the solution time (shift $10$\,s, unsolved runs
counted at the time limit). 
Per-instance results are reported in Table~\ref{tab:rq1} and Appendix~\ref{sec:app_impact_tollcosttype}.}
\centering
\small
\begin{tabular}{|c||c|c|c||c|c|c|}
\hline
\multirow{2}{*}{$(\text{slope},\ \text{intercept})$ on $A^{\tt toll}$}
& \multicolumn{3}{c||}{${\tt (BLS)}$} & \multicolumn{3}{c|}{${\tt (SD\text{-}A)}$}\\
\cline{2-7}
& solved & no sol. & time (s) & solved & no sol. & time (s)\\
\hline\hline
$\left(\alpha_a/2,\ c_a^{\tt fft}/2\right)$ (reference) & $33/55$ & $8$ & $206.1$ & $52/55$ & $0$ & $24.5$ \\
\hline
$\left(\alpha_a,\ c_a^{\tt fft}\right)$                 & $22/55$ & $10$ & $471.7$ & $45/55$          & $0$ & $56.0$          \\ \hline
\end{tabular}
\end{table}

\noindent\textbf{Tractability.}
Table~\ref{tab:rq_costsummary} shows that retaining the original coefficients penalizes
both formulations: the number of instances solved to optimality drops from $33$ to $22$
for (BLS) and from $52$ to $45$ for (SD-A), while the shifted geometric mean of the
solution time roughly doubles for both. The ranking of the two formulations is
unaffected, and the advantage of (SD-A) actually grows: on the instances solved by both, its
median speed-up over (BLS) increases from $1.5\times$ to $2.4\times$, and it remains the
only formulation that returns a feasible solution on every instance, whereas (BLS) fails
to do so on $10$ of them. The degradation is not uniform in the number of commodities:
(SD-A) still solves all five instances with $\left|\mathcal{K}\right|=132$ and all five with
$\left|\mathcal{K}\right|=15$, but only two of five for $\left|\mathcal{K}\right|=10$ --- another indication
that the difficulty of an instance is not governed by its size alone.
Finally, the constraint-generation procedure remains equally effective under the original
coefficients. Separation is triggered on essentially all instances in both cost structures,
but converges in at most five iterations, generating at most $11$ path inequalities per
instance under the reference coefficients and at most $16$ under the original ones. The
initial seed is slightly larger under the original coefficients --- on average $0.85$ more
inequalities per instance, and more on $39$ of the $55$ --- and the total number of
inequalities generated over the benchmark rises from $248$ to $301$, consistent with a
steeper congestion term spreading the equilibrium flow over a larger number of paths.

\noindent\textbf{Gaps at the time limit.}
The picture of Sections~\ref{sec:exp_exact} and~\ref{sec:exp_bounds} carries over. On the
instances that (SD-A) solves but (BLS) does not, (BLS) is again close to optimality ---
on average $0.008\%$ and at most $0.033\%$ over the $14$ such instances --- so that these
failures remain ones of certification rather than of primal search; symmetrically, every
instance solved by (BLS) is also solved by (SD-A). On the ten instances left unsolved by
\emph{both} formulations, the residual pooled gaps remain small: on the nine where (BLS)
also returns a feasible solution, they average $0.840\%$ for (BLS) against $1.046\%$ for
(SD-A), so that (BLS) returns slightly better feasible solutions on this hard core.

\subsection{Path-based formulations as heuristics}
\label{sec:exp_heuristic}

We now evaluate the two path-based formulations, (SD-P)~\eqref{eq:form_sd_paths_small}
and (CC)~\eqref{eq:form_cs}, solved by column-and-constraint generation as
described in Section~\ref{sec:exp_setup}. As explained there, the termination of
this procedure does not certify global optimality for~\eqref{eq:BW}; these
formulations are therefore used as heuristics, and we assess them in terms of
the quality of the solutions they produce and of the associated computational
effort.

\paragraph{Evaluation by bounding.}
For each instance, the heuristic returns a valid lower bound $\underline{z}$ on
the optimal value $z^\star$ of~\eqref{eq:BW}, obtained as follows. If the
procedure terminates within the time limit (no violated constraint is found),
the flow of the current restriction is a Wardrop equilibrium for the
corresponding tolls, so the restriction's optimal value is feasible
for~\eqref{eq:BW} and is reported as $\underline{z}$. If the time limit is
reached, we take the tolls of a feasible solution of the current restriction,
compute a Wardrop equilibrium for these tolls (by solving~\eqref{eq:wardrop_syst_aff}), 
and report the resulting leader's revenue, which is feasible
for~\eqref{eq:BW}.

The certified relative
gap
\[
\frac{\overline{z}-\underline{z}}{\underline{z}}
\]
is then an upper bound on the true gap between the heuristic solution and the
optimum.

\paragraph{Results.}
The results are reported in Table~\ref{tab:rq4}, for both (SD-P) and (CC).

\emph{Solution quality.} Both heuristics prove remarkably accurate. On the $53$
instances for which one of the exact formulations of Section~\ref{sec:exp_exact}
proved optimality, (SD-P) and (CC) both return a solution whose value coincides
with $z^\star$, up to the solver tolerance, on \emph{all} $53$ of them. On the two
remaining instances --- $\left|\mathcal{K}\right|=8$ with $10$ toll arcs and $\left|\mathcal{K}\right|=30$
with $10$ toll arcs --- for which no exact formulation closed the gap within the time
limit, both heuristics match the best feasible solution found by any method. 
The certified gaps of Table~\ref{tab:rq4} are correspondingly small: at most $0.01\%$
on $52$ of the $55$ instances for (SD-P) and on $53$ for (CC), with a median of
$0.005\%$. The larger values, up to $0.881\%$, occur on the two instances that no exact
formulation closed --- where the heuristic solution is the best one known and the
residual gap may reflect the looseness of $\overline{z}$ --- and, for (SD-P), on
$\left|\mathcal{K}\right|=6$ with $10$ toll arcs ($0.011\%$), where the returned value
lies within the solver tolerance of the optimum.

\emph{Comparison of the two heuristics.} The two formulations return essentially the
same objective values 
--- the largest relative difference between their values over the whole
benchmark is $2.2\cdot 10^{-5}$ --- but they differ sharply in computational effort.
(CC) is faster on $54$ of the $55$ instances, by a median factor of $13.6$ and by up
to nearly three orders of magnitude; over the whole benchmark it requires $20.5$
seconds and $25\,069$ branch-and-bound nodes in total, against $1026$ seconds and
$2.3$ million nodes for (SD-P). Both terminate within the time limit on every instance.
The contrast is sharpest on the instance with $\left|\mathcal{K}\right|=8$ and $10$ toll arcs,
where (SD-P) needs $475$ seconds and more than $1.5$ million nodes while (CC)
terminates in $0.8$ seconds after $1\,176$ nodes, both returning the same objective value. 
Since the two formulations share the same column-and-constraint generation scheme, the
same initialization and, as observed above, the same objective values, this gap in
performance is attributable to the way the equilibrium conditions are expressed: the
complementarity conditions handled through SOS1 constraints in (CC) are processed far
more efficiently than the strong-duality inequality of (SD-P), whose quadratic term
couples all arc flows.

\emph{Comparison with the exact formulations.} The heuristics are most useful precisely
where the exact formulations struggle. On the three instances on which (SD-A) reaches
the time limit, (CC) returns in at most $2.9$ seconds a solution at least as good as the
incumbent (SD-A) reaches after $1800$ seconds; on $\left|\mathcal{K}\right|=30$ with $8$ toll arcs,
where (BLS) finds no feasible solution at all, it returns the best solution known in
under two seconds. The constraint-generation procedure behaves as it does for (SD-A):
the sparse shortest-path initialization leaves at least one violated inequality on every
instance, so that separation is triggered throughout, but it converges in at most five
iterations, generating at most $13$ path inequalities on any instance.

\paragraph{Sensitivity to the initial path set.}
Initializing with the zero-toll equilibrium alone preserves the accuracy of both
formulations: each still returns an optimal solution on every instance whose optimal
value is known. The burden merely shifts to the separation, which generates about $60\%$
more path inequalities over the benchmark ($402$ against $250$). Initializing with the
equilibrium at $\tau=\tau^{\tt up}$ alone, by contrast, degrades the results markedly,
and does so slightly more for (CC) than for (SD-P): (SD-P) returns an optimal solution on
$46$ of the $53$ instances, with a worst-case shortfall of $10.2\%$, and (CC) on 
 $42$, with a worst case of $11.2\%$.

The explanation is structural. At $\tau=\tau^{\tt up}$ the tolled arcs are expensive, so
the corresponding equilibrium largely avoids them, and the resulting path set offers
little capacity to route flow through the toll arcs --- precisely the flow on which
revenue is raised. Although the separation is still triggered on every instance, it is
driven by the relaxation flow and does not, on its own, recover the missing
revenue-carrying routes: the column-and-constraint generation then converges to an
impoverished restricted model, leaving the returned solution suboptimal on $7$ of the
$53$ instances for (SD-P) and on $11$ for (CC). Consistently with this, the two
formulations, which return essentially identical solutions under the reference
initialization, no longer do so under this one.

These experiments show that the quality of the solutions returned by the path-based
formulations is not an automatic consequence of the column-and-constraint generation
scheme: it relies on the initial path set containing the routes that users take when
tolls do not deter them. They also indicate that the two equilibria used for
initialization play complementary roles, the zero-toll one being essential to
solution quality and the one at $\tau=\tau^{\tt up}$ 
mainly reducing the number of path inequalities that must be generated. 

\begin{table}[h!]
\caption{\label{tab:rq5_init} Sensitivity of the path-based formulations to the
initial path set, in the reference configuration (time limit TL${}=1800$\,s, $55$
instances). ``optimal'' counts the instances, among the $53$ whose optimal value is
known from an exact formulation, on which the returned solution is optimal;
``$\sum\#p_g$'' is the total number of path inequalities generated, and ``separated''
the number of instances on which at least one was generated. 
Per-instance results are reported in Tables~\ref{tab:rq5_pathmin} and~\ref{tab:rq5_pathmax}
(Appendix~\ref{sec:app_heur}).}
\centering
\small
\begin{tabular}{|l|l||c|c|c|c|c|}
\hline
& initial path set & optimal & $\sum\#p_g$ & separated & time (s) & worst shortfall\\
\hline\hline
\multirow{3}{*}{(SD-P)}
& $\tau=0$ and $\tau=\tau^{\tt up}$ & $\mathbf{53/53}$ & $250$ & $55/55$ & $1026.1$ & $0.002\%$ \\
& $\tau=0$ only                     & $\mathbf{53/53}$ & $402$ & $55/55$ & $2722.2$ & $0.002\%$ \\
& $\tau=\tau^{\tt up}$ only         & $46/53$          & $326$ & $55/55$ & $516.7$  & $10.230\%$ \\
\hline
\multirow{3}{*}{(CC)}
& $\tau=0$ and $\tau=\tau^{\tt up}$ & $\mathbf{53/53}$ & $247$ & $55/55$ & $20.5$  & $0.001\%$ \\
& $\tau=0$ only                     & $\mathbf{53/53}$ & $402$ & $55/55$ & $26.2$  & $0.001\%$ \\
& $\tau=\tau^{\tt up}$ only         & $42/53$          & $317$ & $55/55$ & $16.0$  & $11.246\%$ \\
\hline
\end{tabular}
\end{table}

\subsection{The full Sioux Falls and Eastern Massachusetts networks}
\label{sec:FullSiouxFalls_EMA}

\begin{table}[h!]
\centering
\small
\setlength{\tabcolsep}{2.4pt}
\renewcommand{\arraystretch}{1.1}
\caption{\label{tab:rq_larger} Summary of the results on the three networks (reference
configuration, time limit TL${}=1800$\,s), for the exact formulations (top) and the
path-based heuristics (bottom), reported with the same fields. ``Subnetwork'', ``Full SF'' and ``East.\ Mass.'' denote the $12$-node Sioux Falls subnetwork, the complete Sioux Falls network and the Eastern Massachusetts network, respectively. ``\#\,inst.'' is the
number of instances and ``\#\,known'' the number of them whose optimal value is known,
i.e.\ solved to proven optimality by an exact formulation. For each method, ``term.''
counts the runs that terminate within the time limit --- for an exact formulation
this certifies optimality, whereas for a heuristic it only means that no violated path
inequality remains; ``opt'' counts the instances, among the ``\#\,known'' ones, on which
the returned solution is optimal up to the solver tolerance ($0.01\%$), whether or not
the run terminates; ``best'' counts the instances on which the returned solution
matches, within the same tolerance, the best solution returned by any of the four
methods; ``no feas.'' counts the instances on which no feasible solution is returned;
``sgm'' is the shifted geometric mean of the solution time (in seconds, $10$\,s shift,
time-limited runs counted at the time limit).}
\begin{tabular}{|l|c|c||c|c|c|c|r||c|c|c|c|r|}
\hline
\multicolumn{13}{|c|}{\emph{Exact formulations}}\\
\hline
\multirow{2}{*}{Network} & \multirow{2}{*}{\shortstack{\#\\inst.}} & \multirow{2}{*}{\shortstack{\#\\known}}
 & \multicolumn{5}{c||}{(BLS)} & \multicolumn{5}{c|}{(SD-A)} \\
\cline{4-13}
 & & & term. & opt & best & \begin{tabular}[b]{@{}c@{}}no\\feas.\end{tabular} & sgm & term. & opt & best & \begin{tabular}[b]{@{}c@{}}no\\feas.\end{tabular} & sgm \\
\hline\hline
Subnetwork       & $55$ & $53$ & $33$ & $45$ & $46$ & $8$  & $206.1$  & $52$ & $53$ & $53$ & $0$ & $24.5$  \\
Full SF          & $55$ & $36$ & $7$  & $14$ & $19$ & $31$ & $1305.6$ & $35$ & $35$ & $39$ & $0$ & $232.7$ \\
East.\ Mass.    & $60$ & $26$ & $0$  & $11$ & $12$ & $45$ & $1800.0$ & $26$ & $26$ & $29$ & $0$ & $392.5$ \\
\hline\hline
\multicolumn{13}{|c|}{\emph{Path-based heuristics}}\\
\hline
\multirow{2}{*}{Network} & \multirow{2}{*}{\shortstack{\#\\inst.}} & \multirow{2}{*}{\shortstack{\#\\known}}
 & \multicolumn{5}{c||}{(SD-P)} & \multicolumn{5}{c|}{(CC)} \\
\cline{4-13}
 & & & term. & opt & best & \begin{tabular}[b]{@{}c@{}}no\\feas.\end{tabular} & sgm & term. & opt & best & \begin{tabular}[b]{@{}c@{}}no\\feas.\end{tabular} & sgm \\
\hline\hline
Subnetwork       & $55$ & $53$ & $55$ & $53$ & $55$ & $0$ & $6.7$   & $55$ & $53$ & $55$ & $0$ & $0.4$  \\
Full SF          & $55$ & $36$ & $45$ & $32$ & $43$ & $0$ & $48.4$  & $53$ & $34$ & $52$ & $0$ & $11.5$ \\
East.\ Mass.    & $60$ & $26$ & $40$ & $26$ & $42$ & $0$ & $112.0$ & $59$ & $26$ & $59$ & $0$ & $8.4$  \\
\hline
\end{tabular}
\end{table}
The results obtained with the reference configuration on the two larger networks are
summarized in Table~\ref{tab:rq_larger}, together with those on the subnetwork, with
the per-instance results in Appendices~\ref{sec:app_bignet} and~\ref{sec:EAM_net}. The
exact formulations (upper part) and the heuristics (lower part) are reported with the
same fields, so that the two parts can be read side by side --- with the caveat that
termination within the time limit (``term.'') certifies optimality only for the exact
formulations.

As the network grows from $12$ to $24$ to $74$ nodes, (BLS) degrades rapidly: it solves
$33$ of the $55$ subnetwork instances, $7$ of the $55$ full-Sioux-Falls instances, and
\emph{none} of the $60$ Eastern Massachusetts instances, on $45$ of which it returns no
feasible solution at all. (SD-A), by contrast, remains effective and always feasible ---
solving $52$, $35$ and $26$ instances respectively. 
The ``opt'' column shows that part of the
difficulty of (BLS) is one of certification: it returns an optimal solution on $45$,
$14$ and $11$ of the instances whose optimum is known, while proving optimality on only
$33$, $7$ and $0$ of them. This is not a matter of instance size alone: on all three
networks, some small-demand instances remain unsolved while some large ones are solved.
It is a matter of formulation. The strong-duality reformulation 
remains usable on
networks on which the big-$M$ linearization of (BLS) no longer even locates a feasible
point.

The two path-based heuristics extend this reach further. Wherever an exact formulation
certifies an optimum, both recover it: on all $53$ such instances on the subnetwork, on
$32$ (SD-P) and $34$ (CC) of the $36$ on the full Sioux Falls, and on all $26$ on
Eastern Massachusetts, with worst-case shortfalls below $0.2\%$; and where the exact
formulations run out of time or feasibility, the heuristics still return a solution on
every instance. Between the two, (CC) is consistently the faster --- with a shifted
geometric mean of $0.4$, $11.5$ and $8.4$ seconds, against $6.7$, $48.4$ and $112.0$ for
(SD-P) --- and, on the larger networks, the more robust: it terminates within the time
limit on $53$ of the $55$ full-Sioux-Falls instances and on $59$ of the $60$ Eastern
Massachusetts ones, against $45$ and $40$ for (SD-P). It is also the method that most
often returns the best solution known: on $52$ and $59$ instances of the two larger
networks, against $39$ and $29$ for (SD-A) and $43$ and $42$ for (SD-P). In particular,
on the instances on which (SD-A) itself reaches the time limit, (CC) returns a solution
at least as good as the (SD-A) incumbent on all $3$ such instances of the subnetwork,
on $19$ of the $20$ of the full Sioux Falls network, and on all $34$ of the Eastern
Massachusetts network.
\section{Conclusion}
\label{sec:conclusion}
 
In this paper, we studied the toll-setting problem~\eqref{eq:BW} under Wardrop
equilibrium constraints, in the case of separable affine arc cost functions and with
the objective of maximizing the leader's revenue. Starting from Beckmann's convex
formulation of Wardrop equilibria and its dual, 
we derived three new single-level
reformulations of the problem: an arc-based formulation (SD-A), which we solve exactly, and
two path-based formulations, (SD-P) and (CC), solved by column-and-constraint
generation. We showed that the common assumption that every commodity admits a
toll-free path induces finite implicit bounds on the tolls
(Proposition~\ref{prop:toll_upper_bound}), which makes it possible to treat the case
of unbounded tolls without resorting to arbitrary bounds, and we derived from
Beckmann's potential an upper bound on the optimal revenue that applies uniformly to
all the formulations considered and is used to certify optimality gaps.
 
On the computational side, we conducted an extensive study on instances derived from
the Sioux Falls and Eastern Massachusetts networks, comparing our formulations with
the recent formulation of Beck et al.~\cite{BLS_2024}. The arc-based formulation
(SD-A) proved markedly more robust: it solved to proven optimality substantially more
instances than (BLS) and, unlike (BLS), returned a feasible solution on every instance
of every network. The difference widened with network size --- on the largest network,
(BLS) solved none of the instances and returned no feasible solution on most of them,
whereas (SD-A) still solved a sizeable fraction --- and it is one of formulation rather
than of instance size, since on all three networks some small-demand instances resisted
while some larger ones were solved. Pooling the dual bounds of the formulations, rather
than relying on each solver's own bound, also allowed us to localize one 
difficulty: on 
many instances left unsolved by (BLS) with an 
incumbent, the latter  turned out to be optimal, or nearly
so, the obstacle being the certification of optimality rather than the search for a good
solution.

 Used as heuristics, the two path-based formulations proved remarkably accurate:
they returned an optimal solution on every instance whose optimal value is known on the
subnetwork and on the Eastern Massachusetts network, and on all but a few on the full
Sioux Falls, always within a fraction of a percent of the optimum; and they returned a
feasible solution on every instance on which the exact formulations found none, often
in a few seconds where those formulations reached the time limit. The
complementarity-based formulation (CC) proved consistently and substantially cheaper
than (SD-P), and, on the larger networks, the more robust of the two; the two return
essentially identical solutions on the subnetwork, but no longer coincide as the network
grows. This accuracy rests on the initial set of paths: an ablation of the
initialization shows that the shortest paths at the zero-toll equilibrium are important 
to it.

Finally,
and beyond the specific figures, our experiments highlight that the magnitude of the
toll bounds is a significant factor affecting computational performance. 
 
Several directions for future research emerge from this work. 
First, the path-based formulations (SD-P) and (CC) are used here as heuristics:
termination of the column-and-constraint generation guarantees feasibility, but
not optimality. Given their accuracy in our experiments, devising a way to turn
them into exact methods is a natural next step. 
Second, the paths
and feasible solutions generated by the path-based heuristics could be used to
initialize or warm-start the exact formulation (SD-A); our observation that the
bottleneck often lies in certification rather than in primal search suggests,
however, that the benefit of a better incumbent alone may be limited, and that such
an initialization would have to be combined with techniques improving the dual
bound. Third, a more systematic study of the influence of the toll-arc cost
structure, only briefly touched upon here, would be of interest. 

Finally, another promising direction for future research is to develop tailored solution 
approaches that exploit the specific structure of the reformulations, such as the fact that 
the nonconvexity of (SD-A) is concentrated in a single bilinear constraint.

\section*{Acknowledgments}
This research benefited from the support of the FMJH Program Gaspard Monge for
optimization and operations research and their interactions with data science.

\bibliographystyle{plain}
\bibliography{references}

\appendix

\section{Instances based on the Sioux Falls network}
\label{app:instance_Sioux}
\subsection{Description of the network and demands}
In the computations reported in Sections~\ref{sec:exp_exact}--\ref{sec:exp_heuristic},
we use a subnetwork of the Sioux Falls network~\cite{LMP_1975,TransportationNetworks},
already considered in~\cite{BLS_2024}. It is induced by the $12$ nodes
$\{7,8,9,10,15,16,17,18,19,20,21,22\}$ of the original network and has $36$ arcs; see
Figure~\ref{fig:siouxfalls}.

\begin{figure}[htbp]
\centering
\caption{The Sioux Falls network ($24$ nodes, $76$ arcs), drawn from the node
coordinates and links of the benchmark data~\cite{TransportationNetworks}, with the
original node identifiers. The shaded subnetwork induced by the $12$ nodes
$\{7,8,9,10,15,16,17,18,19,20,21,22\}$ ($36$ arcs), used in the main experiments, is
drawn with thicker edges. Each drawn edge represents a pair of opposite arcs.}
\label{fig:siouxfalls}
\begin{tikzpicture}[
    sub/.style={circle,draw=black,line width=1pt,fill=black!15,inner sep=0pt,minimum size=6.5mm,font=\small},
    reg/.style={circle,draw=black,line width=0.6pt,fill=white,inner sep=0pt,minimum size=6.5mm,font=\small}]
 
  \fill[black!7,rounded corners=10pt] (4.49,3.89) -- (4.81,2.2) -- (4.93,1.24) -- (7.03,1.99) -- (9.0,6.81) -- (8.79,9.08) -- (6.98,9.04) -- (4.69,7.66) -- (4.42,6.44) -- cycle;
 
  \draw[line width=0.55pt] (1.91,14.0) -- (6.82,13.2);
  \draw[line width=0.55pt] (1.91,14.0) -- (1.58,9.43);
  \draw[line width=0.55pt] (6.82,13.2) -- (6.79,11.11);
  \draw[line width=0.55pt] (1.58,9.43) -- (3.84,8.36);
  \draw[line width=0.55pt] (1.58,9.43) -- (1.1,6.1);
  \draw[line width=0.55pt] (3.84,8.36) -- (5.14,8.41);
  \draw[line width=0.55pt] (3.84,8.36) -- (3.87,6.12);
  \draw[line width=0.55pt] (5.14,8.41) -- (6.79,11.11);
  \draw[line width=0.55pt] (5.14,8.41) -- (5.16,6.95);
  \draw[line width=0.55pt] (6.79,11.11) -- (6.81,8.21);
  \draw[line width=0.55pt] (5.15,6.0) -- (3.87,6.12);
  \draw[line width=0.55pt] (3.87,6.12) -- (1.1,6.1);
  \draw[line width=0.55pt] (3.87,6.12) -- (3.52,4.43);
  \draw[line width=0.55pt] (1.1,6.1) -- (0.0,0.0);
  \draw[line width=0.55pt] (0.0,0.0) -- (3.67,1.43);
  \draw[line width=0.55pt] (3.52,4.43) -- (5.14,4.44);
  \draw[line width=0.55pt] (3.52,4.43) -- (3.53,2.77);
  \draw[line width=0.55pt] (5.18,2.05) -- (3.67,1.43);
  \draw[line width=0.55pt] (5.16,2.98) -- (3.53,2.77);
  \draw[line width=0.55pt] (3.53,2.77) -- (3.67,1.43);
  \draw[line width=1.8pt] (8.31,8.38) -- (6.81,8.21);
  \draw[line width=1.8pt] (8.31,8.38) -- (8.25,6.42);
  \draw[line width=1.8pt] (6.81,8.21) -- (5.16,6.95);
  \draw[line width=1.8pt] (6.81,8.21) -- (6.81,6.6);
  \draw[line width=1.8pt] (5.16,6.95) -- (5.15,6.0);
  \draw[line width=1.8pt] (5.15,6.0) -- (5.14,4.44);
  \draw[line width=1.8pt] (5.15,6.0) -- (6.81,6.6);
  \draw[line width=1.8pt] (5.15,6.0) -- (6.81,5.55);
  \draw[line width=1.8pt] (5.14,4.44) -- (6.82,4.46);
  \draw[line width=1.8pt] (5.14,4.44) -- (5.16,2.98);
  \draw[line width=1.8pt] (6.81,6.6) -- (6.81,5.55);
  \draw[line width=1.8pt] (6.81,6.6) -- (8.25,6.42);
  \draw[line width=1.8pt] (6.81,5.55) -- (6.82,4.46);
  \draw[line width=1.8pt] (8.25,6.42) -- (6.83,2.82);
  \draw[line width=1.8pt] (6.82,4.46) -- (6.83,2.82);
  \draw[line width=1.8pt] (6.83,2.82) -- (5.18,2.05);
  \draw[line width=1.8pt] (6.83,2.82) -- (5.16,2.98);
  \draw[line width=1.8pt] (5.18,2.05) -- (5.16,2.98);
  \node[reg] at (1.91,14.0) {1};
  \node[reg] at (6.82,13.2) {2};
  \node[reg] at (1.58,9.43) {3};
  \node[reg] at (3.84,8.36) {4};
  \node[reg] at (5.14,8.41) {5};
  \node[reg] at (6.79,11.11) {6};
  \node[sub] at (8.31,8.38) {7};
  \node[sub] at (6.81,8.21) {8};
  \node[sub] at (5.16,6.95) {9};
  \node[sub] at (5.15,6.0) {10};
  \node[reg] at (3.87,6.12) {11};
  \node[reg] at (1.1,6.1) {12};
  \node[reg] at (0.0,0.0) {13};
  \node[reg] at (3.52,4.43) {14};
  \node[sub] at (5.14,4.44) {15};
  \node[sub] at (6.81,6.6) {16};
  \node[sub] at (6.81,5.55) {17};
  \node[sub] at (8.25,6.42) {18};
  \node[sub] at (6.82,4.46) {19};
  \node[sub] at (6.83,2.82) {20};
  \node[sub] at (5.18,2.05) {21};
  \node[sub] at (5.16,2.98) {22};
  \node[reg] at (3.53,2.77) {23};
  \node[reg] at (3.67,1.43) {24};
\end{tikzpicture}
\end{figure}

For the arc cost functions, we use the ones proposed in~\cite{BLS_2024}:
\begin{equation}
c_a\left(f_a\right)=c_a^{\tt{fft}}\left(1+1.5\cdot\frac{f_a}{u_a}\right),\qquad a\in A,
\end{equation}
where $c_a^{\tt{fft}}$ is the so-called \emph{free flow time} given in the
description of the network and $u_a$ is the capacity associated with arc
$a\in A$; in the notation of Section~\ref{sec:wardrop}, $\alpha_a=1.5\,c_a^{\tt fft}/u_a$
and $\beta_a=c_a^{\tt fft}$. Following~\cite{BLS_2024}, once the toll arcs have been
selected (Appendix~\ref{app:toll_arcs}), both coefficients of the cost function of each
toll arc are halved in the reference configuration, which yields the slope $\alpha_a/2$
and the intercept $c_a^{\tt fft}/2$ for $a\in A^{\tt toll}$; the alternative in which the
original coefficients are retained is studied in Section~\ref{sec:exp_costs}. 
The same cost functions, built from the free-flow times and capacities of the network
data~\cite{TransportationNetworks}, are used for the complete Sioux Falls and Eastern
Massachusetts networks (Appendices~\ref{sec:app_bignet} and~\ref{sec:EAM_net}).

For the demands, our \emph{complete} set $\mathcal{K}$ refers to the
restriction, to the subgraph under consideration, of the demands of the
original instance having both endpoints in this subgraph; there are 132 such
demands. When considering a smaller number $k$ of commodities, with
$k\in\{4,5,6,7,8,10,15,20,25,30\}$, we mean the first $k$ demands as ordered
in Table~\ref{tab:demands_eval}. The case $k=132$ corresponds to the complete
set $\mathcal{K}$. 

\begin{table}[h!]
\centering
\caption{OD pairs considered in our instances based on a subnetwork of the
Sioux Falls network, with the demand values and identifiers. The ordering of
the demands (column \#) determines, for each $k\le 30$, the first $k$
commodities used in the corresponding instances.}
\label{tab:demands_eval}
 \begin{tabular}{|c|c|c||c|c|c||c|c|c|}
 \hline
\#& $(o_k,s_k)$& demand &\# & $(o_k,s_k)$& demand &\# & $(o_k,s_k)$& demand  \\
 \hline\hline
1&(9,21)& 300.0&11&(10,21)&1200.0&21&(19,21)&400.0\\
2&(8,20)& 900.0&12&(18,22)& 300.0&22&(22,16)&1200.0\\
3&(16,21)& 600.0&13&(9,17)& 900.0&23&(21,7)&200.0\\
4&(17,22)& 1700.0&14&(8,19)& 700.0&24&(20,9)&600.0\\
5&(21,18)& 100.0&15&(7,22)& 500.0&25&(9,22)&700.0\\
6&(20,10)& 2500.0&16&(10,7)& 1900.0&26&(8,21)&400.0\\
7&(15,7)& 500.0&17&(16,20)&1600.0&27&(7,19)&400.0\\
8&(19,9)& 400.0&18&(18,9)&200.0&28 &(10,19)&1800.0\\
9&(22,8)& 500.0&19&(17,18)&600.0&29&(16,9)&1400.0\\
10&(7,17) & 1000.0&20&(15,20)&1100.0&30&(18,15)&200.0\\
 \hline
 \end{tabular}
\end{table}

\subsection{Setting the toll arcs}
\label{app:toll_arcs}
The following procedure is used for the three networks considered. Let $t$ denote the
desired number of toll arcs; in our computations we consider $t\in\{2,4,6,8,10\}$.
First, we compute a Wardrop equilibrium assuming zero tolls on all arcs, and we let
$f_a^{\tt WE}$ denote the equilibrium flow on arc $a\in A$. Then, for each commodity
$k\in\mathcal{K}$, we compute a shortest path with respect to the arc costs
$c_a(f_a^{\tt WE})$, and we order the arcs by decreasing number of shortest paths in
which they appear, ties being broken arbitrarily. We then process the arcs in this
order. For each candidate arc, we set it as a toll arc provided that a toll-free path
still exists for every commodity afterwards; otherwise, the arc is left toll-free and
the next candidate is processed similarly. The procedure stops once $t$ toll arcs have
been set, or once all arcs have been processed. Being greedy, it offers no guarantee of
reaching $t$ toll arcs while preserving a toll-free path for every commodity, even when
such a set of toll arcs exists; on all the instances considered in our experiments,
however, the desired number $t$ was reached. Finally, since the candidate arcs are
processed in a fixed order that does not depend on $t$, and since the decision taken
for a candidate arc depends only on the arcs already selected, the procedure yields a
nested family of toll-arc sets: for $t_1<t_2$, the set of toll arcs of the instance with
$t_1$ toll arcs is contained in that of the instance with $t_2$ toll arcs.

\section{Detailed results for the impact of the toll bounds}
\label{sec:app_impact_toll_bounds}
Tables~\ref{tab:rq_toll0} and~\ref{tab:rq_toll2} give the per-instance results
underlying the summary of Table~\ref{tab:rq_tollsummary} (Section~\ref{sec:exp_bounds}),
under the tightest bound $\tfrac{1}{2}\min(c_a^{\tt fft},\tau_a^\infty)$ and the loosest
bound $\tau_a^\infty$ respectively; the reference bound
$\min(c_a^{\tt fft},\tau_a^\infty)$ is reported in Table~\ref{tab:rq1}.

\begin{sidewaystable}
\caption{\label{tab:rq_toll0} Comparison of (BLS) and (SD-A) under the tightest toll
upper bound $\tau_a^{\tt up}=\tfrac{1}{2}\min\!\left(c_a^{\tt fft},\tau_a^\infty\right)$
of~\eqref{eq:toll_ub_types} (time limit TL${}=1800$\,s). Columns are as in
Table~\ref{tab:rq1}. A dash (--) indicates that no feasible solution was found.}
\tiny
\centering
\begin{tabular}{|c|c|c||c|c|c|c|c||c|c|c|c|c|c|c|c|}
\hline
\multirow{2}{*}{$\left|\mathcal{K}\right|$} &
\multirow{2}{*}{$(\#o,\#d)$} &
\multirow{2}{*}{$\left|A^{\tt{toll}}\right|$} &
\multicolumn{5}{|c||}{${\tt{(BLS)}}$} &
\multicolumn{8}{|c|}{${\tt{(SD\text{-}A)}}$}
\\
\cline{4-16}
& & & objective & time & nodes & gap$_{\tt own}$ (\%) & gap (\%)
& objective & time & $\#p_i$ & $\#p_g$ & nodes & it & gap$_{\tt own}$ (\%) & gap (\%)\\
\hline
\multirow{5}{*}{4} & \multirow{5}{*}{(4,3)}
& 2  & 5\,980.51 & 0.4 & 25 & 0.000 & 0.000 & 5\,980.51 & 1.6 & 4 & 1 & 2 & 2 & 0.009 & 0.000 \\
& & 4  & 8\,230.51 & 0.6 & 215 & 0.000 & 0.000 & 8\,230.51 & 1.7 & 4 & 1 & 2 & 2 & 0.009 & 0.000 \\
& & 6  & 9\,580.51 & 1.0 & 814 & 0.000 & 0.000 & 9\,580.51 & 1.8 & 4 & 1 & 2 & 2 & 0.007 & 0.000 \\
& & 8  & 13\,006.05 & 0.9 & 7 & 0.000 & 0.000 & 13\,006.05 & 1.8 & 4 & 1 & 2 & 2 & 0.002 & 0.000 \\
& & 10  & 14\,850.00 & 1.0 & 242 & 0.000 & 0.000 & 14\,850.00 & 0.1 & 4 & 0 & 1 & 1 & 0.005 & 0.000 \\
\hline
\multirow{5}{*}{5} & \multirow{5}{*}{(5,4)}
& 2  & 5\,980.51 & 0.4 & 1 & 0.000 & 0.000 & 5\,980.51 & 2.0 & 5 & 1 & 2 & 2 & 0.008 & 0.000 \\
& & 4  & 8\,230.51 & 0.5 & 8 & 0.000 & 0.000 & 8\,230.51 & 1.4 & 5 & 1 & 2 & 2 & 0.009 & 0.000 \\
& & 6  & 9\,580.51 & 1.0 & 1 & 0.000 & 0.000 & 9\,580.51 & 2.1 & 5 & 1 & 2 & 2 & 0.007 & 0.000 \\
& & 8  & 13\,006.05 & 4.4 & 6\,952 & 0.009 & 0.002 & 13\,006.05 & 2.0 & 5 & 1 & 2 & 2 & 0.002 & 0.002 \\
& & 10  & 15\,150.00 & 1.7 & 919 & 0.000 & 0.000 & 15\,150.00 & 1.0 & 5 & 0 & 1 & 1 & 0.009 & 0.000 \\
\hline
\multirow{5}{*}{6} & \multirow{5}{*}{(6,5)}
& 2  & 5\,994.89 & 1.2 & 562 & 0.000 & 0.000 & 5\,994.89 & 2.8 & 6 & 3 & 3 & 3 & 0.008 & 0.000 \\
& & 4  & 11\,282.72 & 5.7 & 4\,967 & 0.000 & 0.000 & 11\,282.72 & 4.3 & 6 & 3 & 3 & 3 & 0.009 & 0.000 \\
& & 6  & 13\,082.72 & 7.6 & 7\,129 & 0.000 & 0.000 & 13\,082.72 & 4.0 & 6 & 3 & 3 & 3 & 0.009 & 0.000 \\
& & 8  & 19\,780.51 & 1.5 & 87 & 0.001 & 0.001 & 19\,780.51 & 3.2 & 7 & 1 & 2 & 2 & 0.010 & 0.001 \\
& & 10  & 26\,201.54 & 3.0 & 1\,341 & 0.000 & 0.000 & 26\,201.54 & 1.5 & 7 & 0 & 1 & 1 & 0.007 & 0.000 \\
\hline
\multirow{5}{*}{7} & \multirow{5}{*}{(7,6)}
& 2  & 4\,650.00 & 5.4 & 3\,235 & 0.000 & 0.000 & 4\,650.00 & 3.6 & 7 & 3 & 3 & 3 & 0.001 & 0.000 \\
& & 4  & 12\,016.58 & 8.6 & 7\,235 & 0.000 & 0.000 & 12\,016.58 & 5.9 & 7 & 3 & 3 & 3 & 0.009 & 0.000 \\
& & 6  & 13\,366.58 & 5.2 & 1\,834 & 0.005 & 0.005 & 13\,366.58 & 7.2 & 7 & 3 & 3 & 3 & 0.009 & 0.005 \\
& & 8  & 19\,875.28 & 7.4 & 5\,977 & 0.000 & 0.000 & 19\,873.82 & 5.6 & 7 & 1 & 2 & 2 & 0.007 & 0.007 \\
& & 10  & 20\,875.28 & 9.4 & 5\,213 & 0.007 & 0.007 & 20\,875.28 & 3.5 & 7 & 1 & 3 & 2 & 0.007 & 0.007 \\
\hline
\multirow{5}{*}{8} & \multirow{5}{*}{(8,7)}
& 2  & 4\,650.00 & 7.2 & 2\,331 & 0.000 & 0.000 & 4\,650.00 & 3.8 & 8 & 3 & 3 & 3 & 0.002 & 0.000 \\
& & 4  & 11\,457.33 & 9.3 & 4\,646 & 0.009 & 0.008 & 11\,457.33 & 3.6 & 9 & 1 & 2 & 2 & 0.008 & 0.008 \\
& & 6  & 16\,221.00 & 5.8 & 1\,963 & 0.000 & 0.000 & 16\,221.00 & 3.1 & 8 & 2 & 2 & 2 & 0.007 & 0.000 \\
& & 8  & 17\,721.00 & 6.0 & 1\,697 & 0.006 & 0.006 & 17\,721.00 & 3.7 & 8 & 2 & 2 & 2 & 0.006 & 0.006 \\
& & 10  & 24\,106.60 & 11.2 & 4\,122 & 0.000 & 0.000 & 24\,106.60 & 5.5 & 8 & 1 & 9 & 2 & 0.008 & 0.000 \\
\hline
\multirow{5}{*}{10} & \multirow{5}{*}{(10,9)}
& 2  & 7\,315.88 & 9.6 & 1\,615 & 0.000 & 0.000 & 7\,315.87 & 4.1 & 11 & 2 & 3 & 3 & 0.009 & 0.000 \\
& & 4  & 11\,315.88 & 10.0 & 1\,447 & 0.002 & 0.002 & 11\,315.87 & 6.9 & 11 & 2 & 3 & 3 & 0.007 & 0.002 \\
& & 6  & 19\,393.13 & 7.4 & 1\,397 & 0.000 & 0.000 & 19\,393.13 & 9.7 & 11 & 2 & 3 & 3 & 0.009 & 0.000 \\
& & 8  & 23\,208.88 & 8.2 & 1\,437 & 0.006 & 0.003 & 23\,208.88 & 5.5 & 11 & 1 & 2 & 2 & 0.003 & 0.003 \\
& & 10  & 24\,708.88 & 10.9 & 1\,386 & 0.001 & 0.001 & 24\,708.88 & 6.5 & 11 & 1 & 7 & 2 & 0.006 & 0.001 \\
\hline
\multirow{5}{*}{15} & \multirow{5}{*}{(12,10)}
& 2  & 7\,000.00 & 20.0 & 1\,262 & 0.000 & 0.000 & 7\,000.00 & 4.1 & 15 & 5 & 3 & 3 & 0.009 & 0.000 \\
& & 4  & 14\,400.00 & 40.0 & 10\,427 & 0.002 & 0.000 & 14\,400.00 & 6.3 & 15 & 4 & 3 & 3 & 0.009 & 0.000 \\
& & 6  & 21\,671.55 & 22.7 & 1\,368 & 0.009 & 0.005 & 21\,671.55 & 5.1 & 16 & 1 & 2 & 2 & 0.005 & 0.005 \\
& & 8  & 27\,171.55 & 26.0 & 1\,358 & 0.001 & 0.001 & 27\,171.55 & 5.8 & 16 & 1 & 2 & 2 & 0.009 & 0.001 \\
& & 10  & 32\,815.20 & 136.9 & 21\,344 & 0.000 & 0.000 & 32\,815.20 & 5.9 & 17 & 1 & 7 & 2 & 0.002 & 0.000 \\
\hline
\multirow{5}{*}{20} & \multirow{5}{*}{(12,10)}
& 2  & 15\,600.00 & 60.8 & 10\,771 & 0.000 & 0.000 & 15\,600.00 & 6.5 & 20 & 4 & 3 & 3 & 0.006 & 0.000 \\
& & 4  & 22\,800.00 & 131.8 & 20\,747 & 0.010 & 0.004 & 22\,799.99 & 5.7 & 20 & 4 & 3 & 3 & 0.004 & 0.004 \\
& & 6  & 27\,500.01 & 163.5 & 21\,212 & 0.004 & 0.004 & 27\,500.00 & 6.8 & 20 & 4 & 3 & 3 & 0.006 & 0.004 \\
& & 8  & 32\,900.00 & 223.4 & 31\,647 & 0.010 & 0.004 & 32\,900.00 & 6.5 & 20 & 4 & 3 & 3 & 0.004 & 0.004 \\
& & 10  & 40\,459.59 & TL & 242\,818 & 17.450 & 0.009 & 40\,459.59 & 5.5 & 22 & 2 & 2 & 2 & 0.009 & 0.009 \\
\hline
\multirow{5}{*}{25} & \multirow{5}{*}{(12,11)}
& 2  & 14\,250.00 & 167.4 & 20\,899 & 0.000 & 0.000 & 14\,250.00 & 4.2 & 26 & 2 & 2 & 2 & 0.000 & 0.000 \\
& & 4  & 28\,549.69 & TL & 668\,243 & 4.886 & 0.009 & 28\,549.69 & 5.2 & 26 & 3 & 2 & 2 & 0.009 & 0.009 \\
& & 6  & 34\,499.69 & 52.9 & 2\,413 & 0.006 & 0.006 & 34\,499.68 & 4.9 & 26 & 3 & 2 & 2 & 0.009 & 0.006 \\
& & 8  & 42\,899.69 & TL & 210\,119 & 16.578 & 0.009 & 42\,899.69 & 6.1 & 26 & 3 & 2 & 2 & 0.009 & 0.009 \\
& & 10  & 54\,407.86 & TL & 275\,610 & 8.081 & 0.000 & 54\,407.86 & 14.5 & 26 & 3 & 2 & 2 & 0.000 & 0.000 \\
\hline
\multirow{5}{*}{30} & \multirow{5}{*}{(12,12)}
& 2  & -- & TL & 114\,863 & -- & -- & 14\,679.70 & 8.0 & 31 & 10 & 3 & 3 & 0.000 & 0.000 \\
& & 4  & -- & TL & 111\,642 & -- & -- & 24\,991.81 & 17.9 & 31 & 10 & 231 & 3 & 0.000 & 0.000 \\
& & 6  & 33\,411.50 & TL & 135\,349 & 43.103 & 0.000 & 33\,411.50 & 17.2 & 34 & 7 & 17 & 3 & 0.000 & 0.000 \\
& & 8  & 49\,312.24 & TL & 139\,449 & 40.700 & 0.003 & 49\,312.24 & 13.8 & 34 & 7 & 1\,140 & 3 & 0.003 & 0.003 \\
& & 10  & 68\,283.57 & TL & 165\,142 & 18.510 & 0.003 & 68\,283.57 & 5.8 & 31 & 3 & 2 & 2 & 0.003 & 0.003 \\
\hline
\multirow{5}{*}{132} & \multirow{5}{*}{(12,12)}
& 2  & -- & TL & 48\,311 & -- & -- & 52\,898.13 & 5.5 & 138 & 4 & 2 & 2 & 0.009 & 0.009 \\
& & 4  & -- & TL & 45\,825 & -- & -- & 74\,595.31 & 5.2 & 134 & 8 & 2 & 2 & 0.009 & 0.009 \\
& & 6  & -- & TL & 51\,579 & -- & -- & 110\,595.31 & 4.7 & 134 & 8 & 2 & 2 & 0.010 & 0.010 \\
& & 8  & -- & TL & 45\,186 & -- & -- & 146\,895.31 & 10.1 & 133 & 9 & 2 & 2 & 0.001 & 0.001 \\
& & 10  & -- & TL & 49\,562 & -- & -- & 175\,026.07 & 11.8 & 137 & 5 & 2 & 2 & 0.008 & 0.008 \\
\hline
\end{tabular}
\end{sidewaystable}

\begin{sidewaystable}
\caption{\label{tab:rq_toll2} Comparison of (BLS) and (SD-A) under the loosest toll
upper bound $\tau_a^{\tt up}=\tau_a^\infty$ of~\eqref{eq:toll_ub_types} 
(time limit TL${}=1800$\,s). Columns are as in
Table~\ref{tab:rq1}. A dash (--) indicates that no feasible solution was found.}
\tiny
\centering
\begin{tabular}{|c|c|c||c|c|c|c|c||c|c|c|c|c|c|c|c|}
\hline
\multirow{2}{*}{$\left|\mathcal{K}\right|$} &
\multirow{2}{*}{$(\#o,\#d)$} &
\multirow{2}{*}{$\left|A^{\tt{toll}}\right|$} &
\multicolumn{5}{|c||}{${\tt{(BLS)}}$} &
\multicolumn{8}{|c|}{${\tt{(SD\text{-}A)}}$}
\\
\cline{4-16}
& & & objective & time & nodes & gap$_{\tt own}$ (\%) & gap (\%)
& objective & time & $\#p_i$ & $\#p_g$ & nodes & it & gap$_{\tt own}$ (\%) & gap (\%)\\
\hline
\multirow{5}{*}{4} & \multirow{5}{*}{(4,3)}
& 2  & 16\,266.08 & 5.3 & 3\,673 & 0.000 & 0.000 & 16\,266.08 & 2.9 & 8 & 0 & 1\,395 & 1 & 0.007 & 0.000 \\
& & 4  & 19\,207.03 & 207.6 & 524\,310 & 0.010 & 0.009 & 19\,207.03 & 7.0 & 8 & 0 & 8\,667 & 1 & 0.009 & 0.009 \\
& & 6  & 24\,621.96 & TL & 3\,584\,499 & 0.012 & 0.010 & 24\,621.95 & 0.3 & 8 & 2 & 2 & 2 & 0.010 & 0.010 \\
& & 8  & 27\,388.84 & TL & 1\,689\,498 & 92.341 & 0.145 & 15\,574.24 & TL & 8 & 0 & 177\,095 & 1 & 76.115 & 76.115 \\
& & 10  & 30\,193.95 & TL & 1\,462\,295 & 149.338 & 0.000 & 30\,193.94 & 0.6 & 8 & 10 & 5 & 5 & 0.000 & 0.000 \\
\hline
\multirow{5}{*}{5} & \multirow{5}{*}{(5,4)}
& 2  & 16\,266.08 & 6.9 & 6\,278 & 0.000 & 0.000 & 16\,266.08 & 4.4 & 9 & 0 & 2\,096 & 1 & 0.003 & 0.000 \\
& & 4  & 19\,207.03 & 50.1 & 84\,738 & 0.007 & 0.007 & 19\,207.03 & 6.1 & 9 & 0 & 6\,351 & 1 & 0.010 & 0.007 \\
& & 6  & 24\,621.96 & TL & 3\,213\,388 & 0.036 & 0.000 & 24\,621.95 & 0.3 & 9 & 2 & 2 & 2 & 0.000 & 0.000 \\
& & 8  & 27\,388.85 & TL & 1\,054\,364 & 99.651 & 0.160 & 15\,574.24 & TL & 9 & 0 & 181\,684 & 1 & 76.142 & 76.142 \\
& & 10  & 30\,585.05 & TL & 887\,490 & 169.830 & 0.000 & 30\,585.05 & 0.6 & 10 & 9 & 4 & 4 & 0.000 & 0.000 \\
\hline
\multirow{5}{*}{6} & \multirow{5}{*}{(6,5)}
& 2  & 16\,035.71 & 28.2 & 35\,245 & 0.001 & 0.001 & 16\,035.71 & 11.9 & 10 & 1 & 3\,882 & 2 & 0.005 & 0.001 \\
& & 4  & 23\,369.55 & 475.8 & 525\,186 & 0.010 & 0.010 & 23\,369.48 & TL & 12 & 1 & 467\,867 & 2 & 0.020 & 0.010 \\
& & 6  & 29\,506.91 & TL & 2\,418\,455 & 0.304 & 0.009 & 29\,506.90 & 8.4 & 12 & 1 & 1\,531 & 2 & 0.009 & 0.009 \\
& & 8  & 35\,745.20 & TL & 720\,625 & 118.200 & 0.010 & 35\,745.19 & 36.1 & 12 & 5 & 25\,574 & 2 & 0.010 & 0.010 \\
& & 10  & 43\,860.17 & TL & 483\,858 & 440.529 & 3.400 & 32\,261.96 & TL & 12 & 5 & 301\,060 & 2 & 40.572 & 40.572 \\
\hline
\multirow{5}{*}{7} & \multirow{5}{*}{(7,6)}
& 2  & 10\,977.31 & 418.6 & 1\,045\,321 & 0.010 & 0.009 & 10\,977.31 & 16.4 & 11 & 2 & 11\,095 & 2 & 0.009 & 0.009 \\
& & 4  & 24\,977.49 & TL & 840\,402 & 8.325 & 0.006 & 24\,977.49 & 700.1 & 14 & 3 & 272\,899 & 4 & 0.006 & 0.006 \\
& & 6  & 30\,657.79 & TL & 450\,381 & 104.985 & 1.303 & 23\,467.76 & TL & 14 & 0 & 224\,419 & 1 & 32.340 & 32.340 \\
& & 8  & 38\,101.82 & TL & 393\,418 & 341.561 & 2.129 & 26\,079.09 & TL & 14 & 0 & 220\,864 & 1 & 49.211 & 49.211 \\
& & 10  & 39\,176.17 & TL & 333\,357 & 675.696 & 1.861 & 32\,894.70 & TL & 14 & 6 & 250\,042 & 3 & 21.312 & 21.312 \\
\hline
\multirow{5}{*}{8} & \multirow{5}{*}{(8,7)}
& 2  & 10\,863.47 & TL & 3\,036\,057 & 0.013 & 0.006 & 10\,863.44 & 59.2 & 12 & 4 & 33\,171 & 3 & 0.006 & 0.006 \\
& & 4  & 21\,588.57 & TL & 430\,937 & 44.821 & 0.010 & 21\,588.57 & 1659.2 & 15 & 1 & 151\,807 & 2 & 0.010 & 0.010 \\
& & 6  & 34\,739.09 & TL & 257\,845 & 250.466 & 0.054 & 34\,754.58 & 1109.2 & 16 & 4 & 277\,563 & 3 & 0.010 & 0.010 \\
& & 8  & 38\,584.87 & TL & 197\,140 & 585.678 & 5.230 & 33\,315.28 & TL & 16 & 2 & 452\,464 & 2 & 21.875 & 21.875 \\
& & 10  & 45\,913.64 & TL & 226\,358 & 1076.156 & 7.894 & 30\,484.38 & TL & 16 & 2 & 164\,287 & 2 & 62.503 & 62.503 \\
\hline
\multirow{5}{*}{10} & \multirow{5}{*}{(10,9)}
& 2  & 13\,891.58 & 88.0 & 160\,200 & 0.010 & 0.009 & 13\,891.51 & 65.3 & 16 & 4 & 39\,561 & 3 & 0.009 & 0.009 \\
& & 4  & 23\,126.45 & TL & 328\,511 & 79.613 & 0.173 & 23\,112.92 & TL & 18 & 3 & 283\,005 & 2 & 0.232 & 0.232 \\
& & 6  & 34\,898.31 & TL & 251\,003 & 334.018 & 7.357 & 33\,869.16 & TL & 19 & 0 & 101\,760 & 1 & 10.619 & 10.619 \\
& & 8  & 41\,119.09 & TL & 176\,266 & 1117.696 & 18.635 & 41\,471.60 & TL & 20 & 3 & 214\,955 & 2 & 17.627 & 17.627 \\
& & 10  & 47\,326.90 & TL & 207\,058 & 1299.108 & 14.663 & 32\,801.81 & TL & 20 & 0 & 81\,384 & 1 & 65.438 & 65.438 \\
\hline
\multirow{5}{*}{15} & \multirow{5}{*}{(12,10)}
& 2  & 23\,192.78 & TL & 715\,873 & 5.151 & 0.008 & 23\,192.77 & 20.6 & 21 & 5 & 4\,204 & 3 & 0.008 & 0.008 \\
& & 4  & 87\,051.45 & TL & 133\,023 & 193.187 & 0.000 & 87\,051.44 & 567.8 & 26 & 4 & 66\,235 & 2 & 0.000 & 0.000 \\
& & 6  & 111\,287.54 & TL & 143\,717 & 448.926 & 12.212 & 112\,266.45 & TL & 29 & 3 & 202\,200 & 2 & 11.234 & 11.234 \\
& & 8  & 147\,793.20 & TL & 197\,768 & 591.225 & 6.473 & 125\,154.28 & TL & 29 & 0 & 86\,703 & 1 & 25.733 & 25.733 \\
& & 10  & 164\,377.85 & TL & 113\,495 & 1007.194 & 9.292 & 134\,768.09 & TL & 30 & 9 & 88\,641 & 2 & 33.304 & 33.304 \\
\hline
\multirow{5}{*}{20} & \multirow{5}{*}{(12,10)}
& 2  & -- & TL & 158\,918 & -- & -- & 35\,956.20 & 21.4 & 28 & 5 & 5\,008 & 3 & 0.003 & 0.003 \\
& & 4  & 43\,085.09 & TL & 148\,677 & 1139.367 & 78.612 & 74\,902.02 & TL & 32 & 0 & 85\,269 & 1 & 2.741 & 2.741 \\
& & 6  & 45\,408.54 & TL & 103\,281 & 2930.608 & 115.754 & 91\,299.90 & TL & 34 & 0 & 106\,968 & 1 & 7.307 & 7.307 \\
& & 8  & 100\,334.04 & TL & 96\,306 & 2571.770 & 18.245 & 105\,305.21 & TL & 36 & 0 & 70\,316 & 1 & 12.663 & 12.663 \\
& & 10  & 101\,787.40 & TL & 83\,423 & 4728.296 & 50.131 & 101\,713.41 & TL & 38 & 0 & 138\,684 & 1 & 50.240 & 50.240 \\
\hline
\multirow{5}{*}{25} & \multirow{5}{*}{(12,11)}
& 2  & -- & TL & 120\,121 & -- & -- & 55\,042.72 & 260.6 & 35 & 2 & 178\,785 & 2 & 0.010 & 0.010 \\
& & 4  & -- & TL & 98\,450 & -- & -- & 92\,812.63 & TL & 43 & 0 & 242\,238 & 1 & 0.181 & 0.181 \\
& & 6  & -- & TL & 87\,379 & -- & -- & 116\,111.61 & TL & 44 & 0 & 101\,123 & 1 & 8.930 & 8.930 \\
& & 8  & -- & TL & 65\,762 & -- & -- & 134\,804.36 & TL & 46 & 0 & 264\,217 & 1 & 15.552 & 15.552 \\
& & 10  & -- & TL & 77\,738 & -- & -- & 145\,375.13 & TL & 48 & 0 & 118\,474 & 1 & 34.039 & 34.039 \\
\hline
\multirow{5}{*}{30} & \multirow{5}{*}{(12,12)}
& 2  & -- & TL & 84\,763 & -- & -- & 41\,512.14 & 61.2 & 43 & 13 & 22\,399 & 4 & 0.007 & 0.007 \\
& & 4  & -- & TL & 57\,403 & -- & -- & 50\,661.67 & TL & 47 & 0 & 196\,770 & 1 & 23.922 & 23.922 \\
& & 6  & -- & TL & 67\,650 & -- & -- & 184\,050.69 & TL & 54 & 0 & 347\,606 & 1 & 11.407 & 11.407 \\
& & 8  & 237\,407.03 & TL & 53\,460 & 2475.392 & 28.233 & 233\,314.05 & TL & 57 & 0 & 320\,610 & 1 & 30.483 & 30.483 \\
& & 10  & 288\,699.19 & TL & 78\,123 & 4485.315 & 47.186 & 175\,152.70 & TL & 58 & 0 & 327\,153 & 1 & 142.603 & 142.603 \\
\hline
\multirow{5}{*}{132} & \multirow{5}{*}{(12,12)}
& 2  & -- & TL & 44\,033 & -- & -- & 279\,723.36 & 207.7 & 178 & 14 & 9\,327 & 2 & 0.000 & 0.000 \\
& & 4  & -- & TL & 36\,154 & -- & -- & 432\,303.88 & TL & 186 & 0 & 340\,605 & 1 & 10.141 & 10.141 \\
& & 6  & -- & TL & 36\,594 & -- & -- & 633\,058.67 & TL & 206 & 0 & 272\,405 & 1 & 27.975 & 27.975 \\
& & 8  & -- & TL & 41\,203 & -- & -- & 1\,192\,205.05 & TL & 217 & 0 & 277\,444 & 1 & 79.274 & 79.274 \\
& & 10  & -- & TL & 45\,657 & -- & -- & 1\,860\,962.73 & TL & 227 & 0 & 285\,897 & 1 & 45.614 & 45.614 \\
\hline
\end{tabular}
\end{sidewaystable}

\section{Detailed results for the impact of the cost structure on toll arcs}
\label{sec:app_impact_tollcosttype}
Table~\ref{tab:rq_cost1} gives the per-instance results for the original toll-arc cost
coefficients $(\alpha_a,c_a^{\tt fft})$, underlying the summary of
Table~\ref{tab:rq_costsummary} (Section~\ref{sec:exp_costs}); the results with the reference coefficients
$(\alpha_a/2,c_a^{\tt fft}/2)$ are reported in Table~\ref{tab:rq1}.
\begin{sidewaystable}
\caption{\label{tab:rq_cost1} Comparison of (BLS) and (SD-A) when the original
cost coefficients of the toll arcs are retained, i.e.\ without halving $\alpha_a$
and $c_a^{\tt fft}$ (time limit TL${}=1800$\,s). The toll upper bound is the
reference one, $\tau_a^{\tt up}=\min(c_a^{\tt fft},\tau_a^\infty)$. Columns are as
in Table~\ref{tab:rq1}. A dash (--) indicates that no feasible solution was found.}
\tiny
\centering
\begin{tabular}{|c|c|c||c|c|c|c|c||c|c|c|c|c|c|c|c|}
\hline
\multirow{2}{*}{$\left|\mathcal{K}\right|$} &
\multirow{2}{*}{$(\#o,\#d)$} &
\multirow{2}{*}{$\left|A^{\tt{toll}}\right|$} &
\multicolumn{5}{|c||}{${\tt{(BLS)}}$} &
\multicolumn{8}{|c|}{${\tt{(SD\text{-}A)}}$}
\\
\cline{4-16}
& & & objective & time & nodes & gap$_{\tt own}$ (\%) & gap (\%)
& objective & time & $\#p_i$ & $\#p_g$ & nodes & it & gap$_{\tt own}$ (\%) & gap (\%)\\
\hline
\multirow{5}{*}{4} & \multirow{5}{*}{(4,3)}
& 2  & 8\,954.63 & 1.1 & 377 & 0.000 & 0.000 & 8\,954.63 & 2.2 & 5 & 1 & 2 & 2 & 0.000 & 0.000 \\
& & 4  & 10\,677.56 & 33.7 & 88\,288 & 0.010 & 0.006 & 10\,677.56 & 9.6 & 6 & 1 & 2\,477 & 2 & 0.006 & 0.006 \\
& & 6  & 13\,377.56 & 34.9 & 55\,637 & 0.009 & 0.009 & 13\,377.56 & 12.5 & 6 & 2 & 1\,823 & 2 & 0.010 & 0.009 \\
& & 8  & 14\,122.90 & 1009.2 & 2\,364\,954 & 0.010 & 0.008 & 14\,122.89 & 7.4 & 7 & 0 & 4\,315 & 1 & 0.008 & 0.008 \\
& & 10  & 15\,259.98 & TL & 2\,713\,898 & 0.238 & 0.000 & 15\,259.98 & 0.3 & 6 & 3 & 2 & 2 & 0.000 & 0.000 \\
\hline
\multirow{5}{*}{5} & \multirow{5}{*}{(5,4)}
& 2  & 8\,954.63 & 2.1 & 2\,250 & 0.000 & 0.000 & 8\,954.63 & 1.4 & 6 & 1 & 2 & 2 & 0.001 & 0.000 \\
& & 4  & 10\,677.56 & 14.4 & 25\,062 & 0.008 & 0.007 & 10\,677.56 & 8.1 & 7 & 1 & 1\,891 & 2 & 0.007 & 0.007 \\
& & 6  & 13\,377.56 & 14.3 & 22\,351 & 0.009 & 0.004 & 13\,377.56 & 11.5 & 7 & 2 & 1\,580 & 2 & 0.004 & 0.004 \\
& & 8  & 14\,122.90 & TL & 3\,682\,431 & 0.013 & 0.010 & 14\,122.89 & 14.5 & 8 & 0 & 8\,339 & 1 & 0.010 & 0.010 \\
& & 10  & 15\,342.20 & TL & 2\,124\,802 & 0.360 & 0.000 & 15\,342.19 & 2.9 & 9 & 1 & 2 & 2 & 0.000 & 0.000 \\
\hline
\multirow{5}{*}{6} & \multirow{5}{*}{(6,5)}
& 2  & 8\,824.98 & 3.9 & 1\,744 & 0.000 & 0.000 & 8\,824.98 & 2.7 & 7 & 2 & 2 & 2 & 0.000 & 0.000 \\
& & 4  & 13\,475.12 & 21.5 & 32\,018 & 0.010 & 0.003 & 13\,475.12 & 10.6 & 8 & 4 & 916 & 3 & 0.003 & 0.003 \\
& & 6  & 14\,387.88 & 88.5 & 149\,888 & 0.010 & 0.007 & 14\,387.88 & 12.7 & 9 & 3 & 817 & 3 & 0.007 & 0.007 \\
& & 8  & 16\,920.57 & 113.4 & 132\,312 & 0.010 & 0.005 & 16\,920.56 & 13.8 & 9 & 5 & 2\,351 & 4 & 0.005 & 0.005 \\
& & 10  & 19\,784.83 & TL & 1\,296\,282 & 0.438 & 0.066 & 19\,784.80 & TL & 8 & 6 & 417\,902 & 3 & 0.066 & 0.066 \\
\hline
\multirow{5}{*}{7} & \multirow{5}{*}{(7,6)}
& 2  & 6\,605.61 & 8.9 & 14\,012 & 0.001 & 0.001 & 6\,605.61 & 10.4 & 8 & 3 & 3\,112 & 2 & 0.009 & 0.001 \\
& & 4  & 13\,792.25 & 30.5 & 23\,302 & 0.004 & 0.000 & 13\,792.25 & 29.2 & 9 & 3 & 4\,779 & 3 & 0.000 & 0.000 \\
& & 6  & 16\,492.25 & 52.0 & 45\,407 & 0.009 & 0.008 & 16\,492.25 & 25.9 & 9 & 4 & 6\,783 & 3 & 0.008 & 0.008 \\
& & 8  & 18\,451.77 & 224.5 & 141\,886 & 0.008 & 0.007 & 18\,451.77 & 16.6 & 9 & 6 & 5\,900 & 5 & 0.007 & 0.007 \\
& & 10  & 18\,786.83 & TL & 893\,595 & 1.812 & 1.113 & 18\,533.94 & TL & 10 & 4 & 383\,356 & 3 & 2.493 & 2.493 \\
\hline
\multirow{5}{*}{8} & \multirow{5}{*}{(8,7)}
& 2  & 6\,553.21 & 25.5 & 24\,294 & 0.006 & 0.001 & 6\,553.21 & 14.8 & 9 & 5 & 3\,502 & 3 & 0.001 & 0.001 \\
& & 4  & 10\,715.73 & 235.7 & 166\,023 & 0.009 & 0.009 & 10\,715.73 & 178.0 & 11 & 2 & 148\,773 & 2 & 0.010 & 0.009 \\
& & 6  & 16\,570.17 & TL & 912\,082 & 0.754 & 0.002 & 16\,570.17 & 23.7 & 11 & 4 & 3\,865 & 3 & 0.002 & 0.002 \\
& & 8  & 19\,570.17 & TL & 835\,973 & 7.763 & 0.002 & 19\,570.17 & 9.0 & 11 & 4 & 2\,676 & 3 & 0.002 & 0.002 \\
& & 10  & 24\,828.86 & TL & 625\,603 & 34.511 & 0.021 & 24\,828.80 & TL & 12 & 3 & 801\,136 & 3 & 0.021 & 0.021 \\
\hline
\multirow{5}{*}{10} & \multirow{5}{*}{(10,9)}
& 2  & 7\,356.59 & 15.3 & 10\,441 & 0.000 & 0.000 & 7\,356.59 & 5.2 & 12 & 4 & 3 & 3 & 0.009 & 0.000 \\
& & 4  & 13\,051.88 & 67.2 & 33\,358 & 0.008 & 0.002 & 13\,051.88 & 17.3 & 12 & 5 & 2\,191 & 3 & 0.002 & 0.002 \\
& & 6  & 17\,585.21 & TL & 598\,405 & 11.199 & 0.423 & 17\,561.89 & TL & 14 & 4 & 212\,167 & 2 & 0.556 & 0.556 \\
& & 8  & 20\,953.78 & TL & 476\,752 & 26.516 & 3.059 & 20\,894.69 & TL & 15 & 3 & 259\,937 & 2 & 3.350 & 3.350 \\
& & 10  & 23\,953.78 & TL & 548\,774 & 9.481 & 2.597 & 23\,894.69 & TL & 15 & 3 & 290\,736 & 2 & 2.851 & 2.851 \\
\hline
\multirow{5}{*}{15} & \multirow{5}{*}{(12,10)}
& 2  & 12\,512.77 & 136.0 & 62\,437 & 0.001 & 0.000 & 12\,512.77 & 9.5 & 15 & 7 & 9 & 3 & 0.000 & 0.000 \\
& & 4  & 26\,595.75 & 260.4 & 83\,474 & 0.006 & 0.006 & 26\,595.75 & 4.1 & 16 & 3 & 2 & 2 & 0.006 & 0.006 \\
& & 6  & 34\,071.18 & 1758.0 & 575\,647 & 0.010 & 0.008 & 34\,071.18 & 20.9 & 18 & 5 & 1\,196 & 3 & 0.008 & 0.008 \\
& & 8  & 43\,994.02 & TL & 321\,998 & 6.404 & 0.005 & 43\,994.02 & 15.3 & 18 & 7 & 144 & 3 & 0.005 & 0.005 \\
& & 10  & 54\,397.08 & TL & 246\,295 & 17.549 & 0.006 & 54\,397.08 & 12.3 & 20 & 6 & 770 & 3 & 0.006 & 0.006 \\
\hline
\multirow{5}{*}{20} & \multirow{5}{*}{(12,10)}
& 2  & 18\,238.34 & TL & 741\,509 & 2.095 & 0.010 & 18\,238.34 & 513.9 & 22 & 6 & 680\,089 & 3 & 0.010 & 0.010 \\
& & 4  & 34\,194.12 & TL & 203\,762 & 26.730 & 0.022 & 34\,194.12 & TL & 22 & 7 & 1\,854\,752 & 4 & 0.022 & 0.022 \\
& & 6  & 40\,197.45 & TL & 185\,476 & 36.122 & 0.010 & 40\,197.44 & 50.4 & 22 & 6 & 1\,675 & 3 & 0.010 & 0.010 \\
& & 8  & 50\,997.45 & TL & 209\,590 & 63.332 & 0.009 & 50\,997.45 & 49.4 & 22 & 6 & 1\,748 & 3 & 0.009 & 0.009 \\
& & 10  & 59\,088.53 & TL & 179\,680 & 75.196 & 0.244 & 59\,211.15 & TL & 24 & 6 & 331\,036 & 3 & 0.037 & 0.037 \\
\hline
\multirow{5}{*}{25} & \multirow{5}{*}{(12,11)}
& 2  & 24\,613.92 & TL & 143\,329 & 10.970 & 0.010 & 24\,613.92 & 4.9 & 27 & 4 & 2 & 2 & 0.010 & 0.010 \\
& & 4  & 40\,223.94 & TL & 259\,501 & 84.732 & 0.016 & 40\,223.94 & TL & 26 & 12 & 2\,799\,759 & 3 & 0.016 & 0.016 \\
& & 6  & 47\,699.32 & TL & 163\,439 & 67.300 & 0.010 & 47\,699.43 & 314.6 & 30 & 7 & 280\,774 & 3 & 0.010 & 0.010 \\
& & 8  & 64\,484.54 & TL & 153\,498 & 63.699 & 0.033 & 64\,499.43 & 75.4 & 29 & 9 & 52\,810 & 2 & 0.010 & 0.010 \\
& & 10  & 73\,663.86 & TL & 137\,479 & 89.264 & 0.010 & 73\,663.86 & 89.2 & 28 & 11 & 34\,200 & 3 & 0.010 & 0.010 \\
\hline
\multirow{5}{*}{30} & \multirow{5}{*}{(12,12)}
& 2  & -- & TL & 85\,993 & -- & -- & 23\,377.16 & 7.0 & 30 & 11 & 3 & 3 & 0.007 & 0.007 \\
& & 4  & -- & TL & 104\,473 & -- & -- & 28\,937.38 & 86.6 & 34 & 10 & 44\,059 & 3 & 0.009 & 0.009 \\
& & 6  & -- & TL & 88\,880 & -- & -- & 46\,596.39 & 59.0 & 33 & 11 & 6\,361 & 3 & 0.010 & 0.010 \\
& & 8  & -- & TL & 82\,077 & -- & -- & 68\,708.84 & TL & 38 & 7 & 237\,423 & 2 & 0.057 & 0.057 \\
& & 10  & -- & TL & 81\,841 & -- & -- & 81\,950.63 & 152.0 & 38 & 7 & 27\,902 & 3 & 0.007 & 0.007 \\
\hline
\multirow{5}{*}{132} & \multirow{5}{*}{(12,12)}
& 2  & -- & TL & 42\,061 & -- & -- & 84\,338.86 & 7.5 & 132 & 14 & 3 & 3 & 0.000 & 0.000 \\
& & 4  & -- & TL & 42\,050 & -- & -- & 117\,720.93 & 12.1 & 136 & 12 & 13 & 3 & 0.000 & 0.000 \\
& & 6  & -- & TL & 42\,917 & -- & -- & 197\,414.38 & 33.6 & 140 & 16 & 16 & 4 & 0.000 & 0.000 \\
& & 8  & -- & TL & 44\,923 & -- & -- & 260\,490.63 & 32.3 & 140 & 12 & 332 & 3 & 0.000 & 0.000 \\
& & 10  & -- & TL & 51\,018 & -- & -- & 316\,419.91 & 25.2 & 136 & 10 & 115 & 2 & 0.001 & 0.001 \\
\hline
\end{tabular}
\end{sidewaystable}

\section{Detailed results for the heuristics}
\label{sec:app_heur}

This appendix reports the full per-instance results for the two path-based heuristics
of Section~\ref{sec:exp_heuristic}: Table~\ref{tab:rq4} for the reference configuration,
and Tables~\ref{tab:rq5_pathmin} and~\ref{tab:rq5_pathmax} for the initialization 
ablation summarized in Table~\ref{tab:rq5_init}.
\begin{sidewaystable}
\caption{\label{tab:rq4} Path-based heuristics (SD-P) and (CC) (time limit
TL${}=1800$\,s). The ``obj'' field reports the valid lower bound $\underline{z}$ on
$z^\star$ produced by the heuristic, and ``gap'' the certified gap
$(\overline{z}-\underline{z})/\underline{z}$ obtained with the pooled instance-level
bound $\overline{z}$ of Section~\ref{sec:exp_exact}; see
Section~\ref{sec:exp_heuristic} for details. The remaining fields are as described
for Table~\ref{tab:rq1}. 
}
\tiny
\centering
\begin{tabular}{|c|c|c||c|c|c|c|c|c|c||c|c|c|c|c|c|c|}
\hline
\multirow{2}{*}{$\left|\mathcal{K}\right|$} &
\multirow{2}{*}{$(\#o,\#d)$} &
\multirow{2}{*}{$\left|A^{\tt{toll}}\right|$} &
\multicolumn{7}{|c||}{(SD-P)} &
\multicolumn{7}{|c|}{(CC)}
\\
\cline{4-17}
& & &
obj & time & $\#p_i$ & $\#p_g$ & nodes & it & gap &
obj & time & $\#p_i$ & $\#p_g$ & nodes & it & gap \\
\hline
\multirow{5}{*}{4} & \multirow{5}{*}{(4,3)}
& 2  & 11\,787.07 & 1.6 & 4 & 2 & 2 & 3 & 0.000 & 11\,787.08 & 0.1 & 4 & 2 & 2 & 3 & 0.000 \\
& & 4  & 16\,287.08 & 1.5 & 4 & 2 & 2 & 3 & 0.000 & 16\,287.08 & 0.1 & 4 & 2 & 2 & 3 & 0.000 \\
& & 6  & 18\,987.08 & 1.3 & 4 & 2 & 2 & 3 & 0.000 & 18\,987.08 & 0.1 & 4 & 2 & 2 & 3 & 0.000 \\
& & 8  & 23\,870.26 & 1.0 & 5 & 1 & 2 & 2 & 0.004 & 23\,870.26 & 0.1 & 5 & 1 & 6 & 2 & 0.004 \\
& & 10  & 26\,212.69 & 2.0 & 5 & 4 & 4 & 4 & 0.003 & 26\,212.69 & 0.2 & 5 & 3 & 37 & 3 & 0.003 \\
\hline
\multirow{5}{*}{5} & \multirow{5}{*}{(5,4)}
& 2  & 11\,787.07 & 1.5 & 5 & 2 & 2 & 3 & 0.000 & 11\,787.08 & 0.1 & 5 & 2 & 2 & 3 & 0.000 \\
& & 4  & 16\,287.08 & 1.5 & 5 & 2 & 2 & 3 & 0.000 & 16\,287.08 & 0.1 & 5 & 2 & 2 & 3 & 0.000 \\
& & 6  & 18\,987.08 & 1.4 & 5 & 2 & 2 & 3 & 0.000 & 18\,987.08 & 0.1 & 5 & 2 & 2 & 3 & 0.000 \\
& & 8  & 23\,870.26 & 1.0 & 6 & 1 & 2 & 2 & 0.001 & 23\,870.26 & 0.1 & 6 & 1 & 6 & 2 & 0.001 \\
& & 10  & 26\,603.80 & 3.0 & 7 & 5 & 4 & 4 & 0.000 & 26\,603.80 & 0.2 & 7 & 3 & 37 & 3 & 0.000 \\
\hline
\multirow{5}{*}{6} & \multirow{5}{*}{(6,5)}
& 2  & 11\,743.58 & 2.8 & 6 & 5 & 3 & 4 & 0.003 & 11\,743.58 & 0.2 & 6 & 5 & 12 & 4 & 0.003 \\
& & 4  & 19\,199.33 & 9.8 & 6 & 6 & 8\,806 & 5 & 0.005 & 19\,199.33 & 0.3 & 6 & 6 & 128 & 5 & 0.005 \\
& & 6  & 22\,799.33 & 9.8 & 6 & 6 & 8\,737 & 5 & 0.005 & 22\,799.33 & 0.3 & 6 & 6 & 128 & 5 & 0.005 \\
& & 8  & 30\,343.01 & 21.4 & 8 & 4 & 69\,395 & 3 & 0.006 & 30\,343.01 & 0.5 & 8 & 4 & 1\,271 & 3 & 0.006 \\
& & 10  & 39\,618.66 & 2.9 & 8 & 4 & 3 & 3 & 0.011 & 39\,619.54 & 0.6 & 8 & 4 & 2\,168 & 3 & 0.009 \\
\hline
\multirow{5}{*}{7} & \multirow{5}{*}{(7,6)}
& 2  & 8\,593.37 & 2.0 & 8 & 3 & 37 & 3 & 0.004 & 8\,593.37 & 0.1 & 8 & 3 & 32 & 3 & 0.004 \\
& & 4  & 19\,477.39 & 3.7 & 8 & 5 & 5 & 5 & 0.004 & 19\,477.39 & 0.4 & 8 & 5 & 141 & 5 & 0.004 \\
& & 6  & 22\,177.39 & 3.6 & 8 & 5 & 5 & 5 & 0.007 & 22\,177.39 & 0.4 & 8 & 5 & 141 & 5 & 0.007 \\
& & 8  & 28\,931.56 & 7.3 & 9 & 3 & 7\,738 & 3 & 0.010 & 28\,931.56 & 0.5 & 9 & 3 & 568 & 3 & 0.010 \\
& & 10  & 31\,005.14 & 1.8 & 9 & 2 & 2 & 2 & 0.010 & 31\,005.14 & 0.3 & 9 & 2 & 150 & 2 & 0.010 \\
\hline
\multirow{5}{*}{8} & \multirow{5}{*}{(8,7)}
& 2  & 8\,612.07 & 1.9 & 9 & 3 & 3 & 3 & 0.005 & 8\,612.07 & 0.1 & 9 & 3 & 7 & 3 & 0.005 \\
& & 4  & 18\,540.89 & 2.0 & 11 & 2 & 2 & 2 & 0.007 & 18\,540.89 & 0.2 & 11 & 2 & 19 & 2 & 0.007 \\
& & 6  & 26\,508.83 & 21.5 & 9 & 6 & 41\,095 & 4 & 0.009 & 26\,508.83 & 0.4 & 9 & 6 & 488 & 4 & 0.009 \\
& & 8  & 29\,508.83 & 17.1 & 9 & 6 & 28\,783 & 4 & 0.010 & 29\,508.71 & 0.4 & 9 & 6 & 710 & 4 & 0.010 \\
& & 10  & 38\,825.90 & 475.4 & 10 & 5 & 1\,523\,376 & 4 & 0.881 & 38\,825.90 & 0.8 & 10 & 5 & 1\,176 & 4 & 0.881 \\
\hline
\multirow{5}{*}{10} & \multirow{5}{*}{(10,9)}
& 2  & 11\,055.08 & 5.0 & 12 & 3 & 618 & 3 & 0.003 & 11\,055.09 & 0.2 & 12 & 3 & 29 & 3 & 0.003 \\
& & 4  & 17\,863.25 & 7.4 & 12 & 3 & 4\,845 & 3 & 0.006 & 17\,863.25 & 0.3 & 12 & 3 & 309 & 3 & 0.006 \\
& & 6  & 30\,245.72 & 2.8 & 13 & 2 & 2 & 2 & 0.004 & 30\,245.72 & 0.3 & 13 & 2 & 266 & 2 & 0.004 \\
& & 8  & 37\,612.33 & 87.1 & 13 & 4 & 98\,017 & 3 & 0.009 & 37\,612.33 & 0.6 & 13 & 4 & 305 & 3 & 0.009 \\
& & 10  & 40\,612.33 & 79.5 & 13 & 4 & 88\,389 & 3 & 0.005 & 40\,612.33 & 0.5 & 13 & 4 & 281 & 3 & 0.005 \\
\hline
\multirow{5}{*}{15} & \multirow{5}{*}{(12,10)}
& 2  & 14\,000.00 & 0.1 & 15 & 5 & 0 & 3 & 0.008 & 14\,000.00 & 0.1 & 15 & 5 & 1 & 3 & 0.008 \\
& & 4  & 28\,362.90 & 1.2 & 15 & 5 & 2 & 3 & 0.007 & 28\,362.90 & 0.1 & 15 & 5 & 2 & 3 & 0.007 \\
& & 6  & 39\,606.76 & 1.9 & 17 & 4 & 3 & 3 & 0.000 & 39\,606.76 & 0.2 & 17 & 4 & 3 & 3 & 0.000 \\
& & 8  & 49\,379.91 & 5.7 & 17 & 4 & 4\,012 & 3 & 0.007 & 49\,379.91 & 0.3 & 17 & 4 & 236 & 3 & 0.007 \\
& & 10  & 61\,791.00 & 5.9 & 18 & 6 & 897 & 4 & 0.003 & 61\,791.00 & 0.5 & 18 & 6 & 246 & 4 & 0.003 \\
\hline
\multirow{5}{*}{20} & \multirow{5}{*}{(12,10)}
& 2  & 29\,695.88 & 1.8 & 20 & 6 & 2 & 3 & 0.003 & 29\,695.88 & 0.1 & 20 & 6 & 2 & 3 & 0.003 \\
& & 4  & 45\,359.25 & 1.5 & 20 & 6 & 2 & 3 & 0.009 & 45\,359.25 & 0.1 & 20 & 6 & 2 & 3 & 0.009 \\
& & 6  & 54\,452.42 & 1.6 & 20 & 7 & 2 & 3 & 0.001 & 54\,452.42 & 0.1 & 20 & 7 & 2 & 3 & 0.001 \\
& & 8  & 65\,252.42 & 1.5 & 20 & 7 & 2 & 3 & 0.000 & 65\,252.42 & 0.1 & 20 & 7 & 2 & 3 & 0.000 \\
& & 10  & 78\,057.41 & 3.0 & 22 & 6 & 3 & 3 & 0.001 & 78\,057.41 & 0.2 & 22 & 6 & 3 & 3 & 0.001 \\
\hline
\multirow{5}{*}{25} & \multirow{5}{*}{(12,11)}
& 2  & 27\,009.85 & 2.3 & 27 & 5 & 3 & 3 & 0.009 & 27\,009.92 & 0.1 & 27 & 5 & 3 & 3 & 0.009 \\
& & 4  & 54\,577.74 & 2.3 & 26 & 7 & 3 & 3 & 0.010 & 54\,577.74 & 0.2 & 26 & 7 & 3 & 3 & 0.010 \\
& & 6  & 66\,477.74 & 2.1 & 26 & 7 & 3 & 3 & 0.010 & 66\,477.74 & 0.2 & 26 & 7 & 3 & 3 & 0.010 \\
& & 8  & 83\,277.74 & 1.9 & 26 & 8 & 3 & 3 & 0.009 & 83\,277.74 & 0.2 & 26 & 8 & 3 & 3 & 0.009 \\
& & 10  & 98\,674.11 & 3.1 & 27 & 4 & 1\,066 & 2 & 0.009 & 98\,674.11 & 0.2 & 27 & 4 & 63 & 2 & 0.009 \\
\hline
\multirow{5}{*}{30} & \multirow{5}{*}{(12,12)}
& 2  & 26\,666.18 & 3.5 & 33 & 13 & 4 & 4 & 0.010 & 26\,666.18 & 0.6 & 33 & 13 & 112 & 4 & 0.010 \\
& & 4  & 39\,102.74 & 17.9 & 36 & 6 & 24\,316 & 2 & 0.008 & 39\,102.74 & 0.7 & 36 & 6 & 703 & 2 & 0.008 \\
& & 6  & 56\,712.73 & 16.7 & 37 & 5 & 18\,675 & 2 & 0.010 & 56\,712.73 & 1.0 & 37 & 5 & 1\,388 & 2 & 0.010 \\
& & 8  & 85\,373.21 & 20.1 & 37 & 6 & 6\,635 & 2 & 0.010 & 85\,373.22 & 1.4 & 37 & 6 & 4\,010 & 2 & 0.010 \\
& & 10  & 107\,438.16 & 141.2 & 36 & 7 & 346\,748 & 2 & 0.079 & 107\,438.16 & 2.9 & 36 & 7 & 9\,705 & 2 & 0.079 \\
\hline
\multirow{5}{*}{132} & \multirow{5}{*}{(12,12)}
& 2  & 97\,020.71 & 1.8 & 138 & 4 & 2 & 2 & 0.009 & 97\,020.71 & 0.4 & 138 & 4 & 34 & 2 & 0.009 \\
& & 4  & 134\,696.98 & 1.8 & 138 & 4 & 2 & 2 & 0.006 & 134\,696.98 & 0.4 & 138 & 4 & 22 & 2 & 0.006 \\
& & 6  & 206\,690.32 & 2.4 & 138 & 6 & 3 & 3 & 0.000 & 206\,690.32 & 0.7 & 138 & 6 & 43 & 3 & 0.000 \\
& & 8  & 279\,290.32 & 2.7 & 137 & 6 & 3 & 3 & 0.002 & 279\,290.32 & 0.7 & 137 & 6 & 22 & 3 & 0.002 \\
& & 10  & 335\,336.96 & 1.7 & 141 & 2 & 2 & 2 & 0.000 & 335\,336.96 & 0.6 & 141 & 2 & 29 & 2 & 0.000 \\
\hline
\end{tabular}
\end{sidewaystable}
\begin{sidewaystable}
\caption{\label{tab:rq5_pathmin} Path-based formulations (SD-P) and (CC) initialized
with the paths of the zero-toll equilibrium only, i.e.\ $\tau_a=0$ for all $a\in A$
(reference configuration, time limit TL${}=1800$\,s). Columns are as in
Table~\ref{tab:rq4}.}
\tiny
\centering
\begin{tabular}{|c|c|c||c|c|c|c|c|c|c||c|c|c|c|c|c|c|}
\hline
\multirow{2}{*}{$\left|\mathcal{K}\right|$} &
\multirow{2}{*}{$(\#o,\#d)$} &
\multirow{2}{*}{$\left|A^{\tt{toll}}\right|$} &
\multicolumn{7}{|c||}{(SD-P)} &
\multicolumn{7}{|c|}{(CC)}
\\
\cline{4-17}
& & &
obj & time & $\#p_i$ & $\#p_g$ & nodes & it & gap &
obj & time & $\#p_i$ & $\#p_g$ & nodes & it & gap \\
\hline
\multirow{5}{*}{4} & \multirow{5}{*}{(4,3)}
& 2  & 11\,787.07 & 1.5 & 4 & 2 & 2 & 3 & 0.000 & 11\,787.08 & 0.1 & 4 & 2 & 2 & 3 & 0.000 \\
& & 4  & 16\,287.08 & 1.4 & 4 & 2 & 2 & 3 & 0.000 & 16\,287.08 & 0.1 & 4 & 2 & 2 & 3 & 0.000 \\
& & 6  & 18\,987.08 & 1.3 & 4 & 2 & 2 & 3 & 0.000 & 18\,987.08 & 0.1 & 4 & 2 & 2 & 3 & 0.000 \\
& & 8  & 23\,870.26 & 0.7 & 4 & 2 & 1 & 2 & 0.004 & 23\,870.26 & 0.1 & 4 & 2 & 5 & 2 & 0.004 \\
& & 10  & 26\,212.69 & 2.2 & 4 & 4 & 3 & 4 & 0.003 & 26\,212.69 & 0.2 & 4 & 4 & 35 & 4 & 0.003 \\
\hline
\multirow{5}{*}{5} & \multirow{5}{*}{(5,4)}
& 2  & 11\,787.07 & 1.6 & 5 & 2 & 2 & 3 & 0.000 & 11\,787.08 & 0.1 & 5 & 2 & 2 & 3 & 0.000 \\
& & 4  & 16\,287.08 & 1.5 & 5 & 2 & 2 & 3 & 0.000 & 16\,287.08 & 0.1 & 5 & 2 & 2 & 3 & 0.000 \\
& & 6  & 18\,987.08 & 1.3 & 5 & 2 & 2 & 3 & 0.000 & 18\,987.08 & 0.1 & 5 & 2 & 2 & 3 & 0.000 \\
& & 8  & 23\,870.26 & 0.8 & 5 & 2 & 1 & 2 & 0.001 & 23\,870.26 & 0.1 & 5 & 2 & 5 & 2 & 0.001 \\
& & 10  & 26\,603.80 & 2.3 & 5 & 6 & 3 & 4 & 0.000 & 26\,603.80 & 0.2 & 5 & 5 & 35 & 4 & 0.000 \\
\hline
\multirow{5}{*}{6} & \multirow{5}{*}{(6,5)}
& 2  & 11\,743.58 & 3.0 & 6 & 5 & 3 & 4 & 0.003 & 11\,743.58 & 0.2 & 6 & 5 & 12 & 4 & 0.003 \\
& & 4  & 19\,199.33 & 9.5 & 6 & 6 & 8\,806 & 5 & 0.005 & 19\,199.33 & 0.3 & 6 & 6 & 128 & 5 & 0.005 \\
& & 6  & 22\,799.33 & 10.0 & 6 & 6 & 8\,737 & 5 & 0.005 & 22\,799.33 & 0.3 & 6 & 6 & 128 & 5 & 0.005 \\
& & 8  & 30\,343.01 & 264.2 & 6 & 6 & 547\,696 & 4 & 0.006 & 30\,343.01 & 0.6 & 6 & 6 & 2\,057 & 4 & 0.006 \\
& & 10  & 39\,618.55 & 8.7 & 6 & 6 & 11\,123 & 4 & 0.011 & 39\,619.54 & 0.6 & 6 & 6 & 1\,818 & 4 & 0.009 \\
\hline
\multirow{5}{*}{7} & \multirow{5}{*}{(7,6)}
& 2  & 8\,593.37 & 1.4 & 7 & 4 & 2 & 3 & 0.004 & 8\,593.37 & 0.1 & 7 & 4 & 31 & 3 & 0.004 \\
& & 4  & 19\,477.39 & 4.2 & 7 & 6 & 506 & 5 & 0.004 & 19\,477.39 & 0.4 & 7 & 6 & 167 & 5 & 0.004 \\
& & 6  & 22\,177.39 & 4.3 & 7 & 6 & 506 & 5 & 0.007 & 22\,177.39 & 0.4 & 7 & 6 & 167 & 5 & 0.007 \\
& & 8  & 28\,931.56 & 11.0 & 7 & 5 & 9\,370 & 4 & 0.010 & 28\,931.56 & 0.5 & 7 & 5 & 1\,043 & 4 & 0.010 \\
& & 10  & 31\,005.14 & 12.1 & 7 & 5 & 14\,388 & 3 & 0.010 & 31\,005.14 & 0.4 & 7 & 5 & 630 & 3 & 0.010 \\
\hline
\multirow{5}{*}{8} & \multirow{5}{*}{(8,7)}
& 2  & 8\,612.07 & 1.7 & 8 & 5 & 2 & 3 & 0.005 & 8\,612.07 & 0.2 & 8 & 5 & 24 & 3 & 0.005 \\
& & 4  & 18\,540.89 & 6.9 & 8 & 8 & 5\,176 & 4 & 0.007 & 18\,540.89 & 0.4 & 8 & 8 & 157 & 4 & 0.007 \\
& & 6  & 26\,508.83 & 20.0 & 8 & 7 & 38\,389 & 4 & 0.009 & 26\,508.83 & 0.5 & 8 & 7 & 1\,087 & 4 & 0.009 \\
& & 8  & 29\,508.83 & 17.3 & 8 & 7 & 33\,749 & 4 & 0.010 & 29\,508.83 & 0.4 & 8 & 7 & 573 & 4 & 0.010 \\
& & 10  & 38\,825.90 & 354.4 & 8 & 7 & 1\,354\,519 & 4 & 0.881 & 38\,825.90 & 0.8 & 8 & 7 & 1\,099 & 4 & 0.881 \\
\hline
\multirow{5}{*}{10} & \multirow{5}{*}{(10,9)}
& 2  & 11\,055.09 & 2.6 & 10 & 5 & 42 & 3 & 0.003 & 11\,055.09 & 0.2 & 10 & 5 & 31 & 3 & 0.003 \\
& & 4  & 17\,863.25 & 6.1 & 10 & 5 & 4\,950 & 3 & 0.006 & 17\,863.25 & 0.2 & 10 & 5 & 297 & 3 & 0.006 \\
& & 6  & 30\,245.72 & 4.4 & 10 & 5 & 875 & 3 & 0.004 & 30\,245.72 & 0.3 & 10 & 5 & 266 & 3 & 0.004 \\
& & 8  & 37\,612.33 & 11.8 & 10 & 8 & 22\,131 & 4 & 0.009 & 37\,612.33 & 0.6 & 10 & 8 & 418 & 4 & 0.009 \\
& & 10  & 40\,612.33 & 12.0 & 10 & 8 & 18\,555 & 4 & 0.005 & 40\,612.33 & 0.6 & 10 & 8 & 418 & 4 & 0.005 \\
\hline
\multirow{5}{*}{15} & \multirow{5}{*}{(12,10)}
& 2  & 14\,000.00 & 0.1 & 15 & 5 & 0 & 3 & 0.008 & 14\,000.00 & 0.1 & 15 & 5 & 1 & 3 & 0.008 \\
& & 4  & 28\,362.90 & 1.2 & 15 & 5 & 2 & 3 & 0.007 & 28\,362.90 & 0.1 & 15 & 5 & 2 & 3 & 0.007 \\
& & 6  & 39\,606.76 & 1.4 & 15 & 6 & 2 & 3 & 0.000 & 39\,606.76 & 0.1 & 15 & 6 & 2 & 3 & 0.000 \\
& & 8  & 49\,379.91 & 4.6 & 15 & 6 & 3\,984 & 3 & 0.007 & 49\,379.91 & 0.2 & 15 & 6 & 231 & 3 & 0.007 \\
& & 10  & 61\,791.00 & 6.1 & 15 & 11 & 1\,578 & 4 & 0.003 & 61\,791.00 & 0.5 & 15 & 11 & 95 & 4 & 0.003 \\
\hline
\multirow{5}{*}{20} & \multirow{5}{*}{(12,10)}
& 2  & 29\,695.88 & 1.8 & 20 & 6 & 2 & 3 & 0.003 & 29\,695.88 & 0.1 & 20 & 6 & 2 & 3 & 0.003 \\
& & 4  & 45\,359.25 & 1.6 & 20 & 6 & 2 & 3 & 0.009 & 45\,359.25 & 0.1 & 20 & 6 & 2 & 3 & 0.009 \\
& & 6  & 54\,452.42 & 1.6 & 20 & 7 & 2 & 3 & 0.001 & 54\,452.42 & 0.1 & 20 & 7 & 2 & 3 & 0.001 \\
& & 8  & 65\,252.42 & 1.5 & 20 & 7 & 2 & 3 & 0.000 & 65\,252.42 & 0.1 & 20 & 7 & 2 & 3 & 0.000 \\
& & 10  & 78\,057.41 & 1.3 & 20 & 8 & 2 & 3 & 0.001 & 78\,057.41 & 0.2 & 20 & 8 & 2 & 3 & 0.001 \\
\hline
\multirow{5}{*}{25} & \multirow{5}{*}{(12,11)}
& 2  & 27\,012.54 & 3.1 & 25 & 7 & 2 & 3 & 0.000 & 27\,009.92 & 0.1 & 25 & 7 & 2 & 3 & 0.009 \\
& & 4  & 54\,577.74 & 1.8 & 25 & 8 & 2 & 3 & 0.010 & 54\,577.74 & 0.2 & 25 & 8 & 2 & 3 & 0.010 \\
& & 6  & 66\,477.74 & 1.7 & 25 & 9 & 2 & 3 & 0.010 & 66\,477.74 & 0.2 & 25 & 9 & 2 & 3 & 0.010 \\
& & 8  & 83\,277.74 & 1.6 & 25 & 10 & 2 & 3 & 0.009 & 83\,277.74 & 0.2 & 25 & 10 & 3 & 3 & 0.009 \\
& & 10  & 98\,674.11 & 7.6 & 25 & 10 & 1\,485 & 3 & 0.009 & 98\,674.11 & 0.5 & 25 & 10 & 240 & 3 & 0.009 \\
\hline
\multirow{5}{*}{30} & \multirow{5}{*}{(12,12)}
& 2  & 26\,666.18 & 1.8 & 30 & 13 & 2 & 3 & 0.010 & 26\,666.18 & 0.3 & 30 & 13 & 75 & 3 & 0.010 \\
& & 4  & 39\,102.75 & 16.2 & 30 & 16 & 9\,638 & 4 & 0.008 & 39\,102.74 & 1.2 & 30 & 16 & 1\,565 & 4 & 0.008 \\
& & 6  & 56\,712.73 & 36.5 & 30 & 16 & 48\,655 & 4 & 0.010 & 56\,712.73 & 2.0 & 30 & 16 & 3\,993 & 4 & 0.010 \\
& & 8  & 85\,373.22 & 41.2 & 30 & 17 & 17\,472 & 5 & 0.010 & 85\,373.21 & 2.6 & 30 & 17 & 6\,025 & 5 & 0.010 \\
& & 10  & 106\,524.68 & TL & 30 & 12 & 494\,654 & 3 & 0.937 & 107\,438.16 & 5.1 & 30 & 13 & 17\,560 & 4 & 0.079 \\
\hline
\multirow{5}{*}{132} & \multirow{5}{*}{(12,12)}
& 2  & 97\,020.71 & 1.9 & 132 & 14 & 2 & 3 & 0.009 & 97\,020.71 & 0.6 & 132 & 14 & 51 & 3 & 0.009 \\
& & 4  & 134\,696.98 & 2.1 & 132 & 14 & 2 & 3 & 0.006 & 134\,696.98 & 0.6 & 132 & 14 & 77 & 3 & 0.006 \\
& & 6  & 206\,690.32 & 2.1 & 132 & 14 & 2 & 3 & 0.000 & 206\,690.32 & 0.8 & 132 & 14 & 111 & 3 & 0.000 \\
& & 8  & 279\,290.32 & 2.2 & 132 & 15 & 2 & 3 & 0.002 & 279\,290.32 & 0.7 & 132 & 15 & 64 & 3 & 0.002 \\
& & 10  & 335\,336.96 & 1.2 & 132 & 14 & 1 & 2 & 0.000 & 335\,336.96 & 0.5 & 132 & 14 & 21 & 2 & 0.000 \\
\hline
\end{tabular}
\end{sidewaystable}

\begin{sidewaystable}
\caption{\label{tab:rq5_pathmax} Path-based formulations (SD-P) and (CC) initialized
with the paths of the equilibrium at the maximal tolls only ($\tau_a=\tau_a^{\tt up}$) (reference configuration, time limit TL${}=1800$\,s). Columns are as in Table~\ref{tab:rq4}. 
}
\tiny
\centering
\begin{tabular}{|c|c|c||c|c|c|c|c|c|c||c|c|c|c|c|c|c|}
\hline
\multirow{2}{*}{$\left|\mathcal{K}\right|$} &
\multirow{2}{*}{$(\#o,\#d)$} &
\multirow{2}{*}{$\left|A^{\tt{toll}}\right|$} &
\multicolumn{7}{|c||}{(SD-P)} &
\multicolumn{7}{|c|}{(CC)}
\\
\cline{4-17}
& & &
obj & time & $\#p_i$ & $\#p_g$ & nodes & it & gap &
obj & time & $\#p_i$ & $\#p_g$ & nodes & it & gap \\
\hline
\multirow{5}{*}{4} & \multirow{5}{*}{(4,3)}
& 2  & 11\,787.07 & 1.6 & 4 & 2 & 2 & 3 & 0.000 & 11\,787.08 & 0.1 & 4 & 2 & 2 & 3 & 0.000 \\
& & 4  & 16\,287.08 & 1.4 & 4 & 2 & 2 & 3 & 0.000 & 16\,287.08 & 0.1 & 4 & 2 & 2 & 3 & 0.000 \\
& & 6  & 18\,987.08 & 1.2 & 4 & 2 & 2 & 3 & 0.000 & 18\,987.08 & 0.1 & 4 & 2 & 2 & 3 & 0.000 \\
& & 8  & 23\,870.26 & 0.8 & 4 & 2 & 1 & 2 & 0.004 & 21\,879.96 & 0.0 & 4 & 1 & 1 & 2 & 9.100 \\
& & 10  & 26\,212.69 & 1.2 & 4 & 5 & 3 & 4 & 0.003 & 23\,611.99 & 0.1 & 4 & 2 & 1 & 2 & 11.018 \\
\hline
\multirow{5}{*}{5} & \multirow{5}{*}{(5,4)}
& 2  & 11\,787.07 & 1.6 & 5 & 2 & 2 & 3 & 0.000 & 11\,787.08 & 0.1 & 5 & 2 & 2 & 3 & 0.000 \\
& & 4  & 16\,287.08 & 1.5 & 5 & 2 & 2 & 3 & 0.000 & 16\,287.08 & 0.1 & 5 & 2 & 2 & 3 & 0.000 \\
& & 6  & 18\,987.08 & 1.3 & 5 & 2 & 2 & 3 & 0.000 & 18\,987.08 & 0.1 & 5 & 2 & 2 & 3 & 0.000 \\
& & 8  & 23\,870.26 & 0.8 & 5 & 2 & 1 & 2 & 0.001 & 21\,879.96 & 0.0 & 5 & 1 & 1 & 2 & 9.098 \\
& & 10  & 26\,603.79 & 1.3 & 5 & 5 & 3 & 4 & 0.000 & 23\,611.99 & 0.1 & 5 & 2 & 1 & 2 & 12.671 \\
\hline
\multirow{5}{*}{6} & \multirow{5}{*}{(6,5)}
& 2  & 11\,743.58 & 2.9 & 6 & 5 & 3 & 4 & 0.003 & 11\,743.58 & 0.2 & 6 & 5 & 12 & 4 & 0.003 \\
& & 4  & 19\,199.33 & 10.0 & 6 & 6 & 8\,806 & 5 & 0.005 & 19\,199.33 & 0.3 & 6 & 6 & 128 & 5 & 0.005 \\
& & 6  & 22\,799.33 & 9.9 & 6 & 6 & 8\,737 & 5 & 0.005 & 22\,799.33 & 0.3 & 6 & 6 & 128 & 5 & 0.005 \\
& & 8  & 30\,343.01 & 192.3 & 6 & 6 & 844\,782 & 4 & 0.006 & 30\,343.01 & 0.6 & 6 & 6 & 1\,387 & 4 & 0.006 \\
& & 10  & 39\,619.53 & 8.6 & 6 & 5 & 30\,291 & 4 & 0.009 & 39\,619.54 & 0.4 & 6 & 5 & 316 & 5 & 0.009 \\
\hline
\multirow{5}{*}{7} & \multirow{5}{*}{(7,6)}
& 2  & 8\,593.37 & 1.5 & 7 & 4 & 2 & 3 & 0.004 & 8\,593.37 & 0.2 & 7 & 4 & 31 & 3 & 0.004 \\
& & 4  & 17\,955.10 & 3.2 & 7 & 5 & 4 & 5 & 8.483 & 17\,955.10 & 0.2 & 7 & 5 & 14 & 5 & 8.483 \\
& & 6  & 20\,655.10 & 3.6 & 7 & 5 & 4 & 5 & 7.378 & 20\,655.10 & 0.2 & 7 & 5 & 10 & 5 & 7.378 \\
& & 8  & 27\,546.70 & 9.8 & 7 & 4 & 9\,949 & 4 & 5.038 & 27\,546.70 & 0.2 & 7 & 4 & 317 & 4 & 5.038 \\
& & 10  & 29\,626.13 & 2.3 & 7 & 3 & 2 & 3 & 4.665 & 29\,626.13 & 0.2 & 7 & 3 & 106 & 3 & 4.665 \\
\hline
\multirow{5}{*}{8} & \multirow{5}{*}{(8,7)}
& 2  & 8\,612.07 & 1.7 & 8 & 5 & 2 & 3 & 0.005 & 8\,612.07 & 0.2 & 8 & 5 & 24 & 3 & 0.005 \\
& & 4  & 16\,644.19 & 0.7 & 8 & 4 & 1 & 2 & 11.404 & 16\,644.19 & 0.1 & 8 & 4 & 1 & 2 & 11.404 \\
& & 6  & 24\,245.72 & 14.2 & 8 & 6 & 19\,300 & 4 & 9.344 & 24\,245.72 & 0.3 & 8 & 6 & 664 & 4 & 9.344 \\
& & 8  & 27\,245.72 & 13.8 & 8 & 6 & 18\,508 & 4 & 8.317 & 27\,245.72 & 0.3 & 8 & 6 & 664 & 4 & 8.317 \\
& & 10  & 37\,427.79 & 3.7 & 8 & 6 & 594 & 4 & 4.649 & 37\,427.79 & 0.2 & 8 & 5 & 97 & 4 & 4.649 \\
\hline
\multirow{5}{*}{10} & \multirow{5}{*}{(10,9)}
& 2  & 11\,055.09 & 1.5 & 10 & 4 & 2 & 3 & 0.003 & 11\,055.09 & 0.1 & 10 & 4 & 20 & 3 & 0.003 \\
& & 4  & 17\,863.25 & 6.6 & 10 & 4 & 5\,363 & 3 & 0.006 & 17\,863.25 & 0.3 & 10 & 4 & 2\,639 & 3 & 0.006 \\
& & 6  & 30\,245.72 & 2.2 & 10 & 6 & 2 & 3 & 0.004 & 30\,245.72 & 0.2 & 10 & 6 & 86 & 3 & 0.004 \\
& & 8  & 37\,612.33 & 21.7 & 10 & 8 & 53\,931 & 5 & 0.009 & 37\,612.33 & 0.9 & 10 & 8 & 615 & 5 & 0.009 \\
& & 10  & 40\,612.33 & 18.4 & 10 & 8 & 42\,533 & 5 & 0.005 & 40\,612.33 & 0.7 & 10 & 8 & 576 & 5 & 0.005 \\
\hline
\multirow{5}{*}{15} & \multirow{5}{*}{(12,10)}
& 2  & 14\,000.00 & 0.1 & 15 & 5 & 0 & 3 & 0.008 & 14\,000.00 & 0.1 & 15 & 5 & 1 & 3 & 0.008 \\
& & 4  & 28\,362.90 & 1.2 & 15 & 5 & 2 & 3 & 0.007 & 28\,362.90 & 0.1 & 15 & 5 & 2 & 3 & 0.007 \\
& & 6  & 39\,606.76 & 1.6 & 15 & 5 & 2 & 3 & 0.000 & 39\,606.76 & 0.1 & 15 & 5 & 2 & 3 & 0.000 \\
& & 8  & 49\,379.91 & 4.1 & 15 & 5 & 3\,243 & 3 & 0.007 & 49\,379.91 & 0.2 & 15 & 5 & 81 & 3 & 0.007 \\
& & 10  & 61\,791.00 & 4.8 & 15 & 7 & 601 & 4 & 0.003 & 61\,791.00 & 0.3 & 15 & 7 & 88 & 4 & 0.003 \\
\hline
\multirow{5}{*}{20} & \multirow{5}{*}{(12,10)}
& 2  & 29\,695.88 & 1.8 & 20 & 6 & 2 & 3 & 0.003 & 29\,695.88 & 0.1 & 20 & 6 & 2 & 3 & 0.003 \\
& & 4  & 45\,359.25 & 1.6 & 20 & 6 & 2 & 3 & 0.009 & 45\,359.25 & 0.1 & 20 & 6 & 2 & 3 & 0.009 \\
& & 6  & 54\,452.42 & 1.6 & 20 & 7 & 2 & 3 & 0.001 & 54\,452.42 & 0.1 & 20 & 7 & 2 & 3 & 0.001 \\
& & 8  & 65\,252.42 & 1.5 & 20 & 7 & 2 & 3 & 0.000 & 65\,252.42 & 0.1 & 20 & 7 & 2 & 3 & 0.000 \\
& & 10  & 78\,057.41 & 1.8 & 20 & 6 & 2 & 3 & 0.001 & 78\,057.41 & 0.1 & 20 & 6 & 2 & 3 & 0.001 \\
\hline
\multirow{5}{*}{25} & \multirow{5}{*}{(12,11)}
& 2  & 27\,009.84 & 1.5 & 25 & 7 & 2 & 3 & 0.009 & 27\,009.92 & 0.1 & 25 & 7 & 2 & 3 & 0.009 \\
& & 4  & 54\,577.74 & 1.4 & 25 & 7 & 2 & 3 & 0.010 & 54\,577.74 & 0.1 & 25 & 7 & 2 & 3 & 0.010 \\
& & 6  & 66\,477.74 & 1.5 & 25 & 7 & 2 & 3 & 0.010 & 66\,477.74 & 0.1 & 25 & 7 & 2 & 3 & 0.010 \\
& & 8  & 83\,277.74 & 1.5 & 25 & 8 & 2 & 3 & 0.009 & 83\,277.74 & 0.1 & 25 & 8 & 2 & 3 & 0.009 \\
& & 10  & 98\,674.11 & 1.8 & 25 & 6 & 781 & 2 & 0.009 & 98\,674.11 & 0.2 & 25 & 6 & 103 & 2 & 0.009 \\
\hline
\multirow{5}{*}{30} & \multirow{5}{*}{(12,12)}
& 2  & 26\,666.18 & 3.4 & 30 & 13 & 7 & 4 & 0.010 & 26\,666.18 & 0.5 & 30 & 13 & 50 & 4 & 0.010 \\
& & 4  & 39\,102.74 & 25.2 & 30 & 9 & 34\,886 & 4 & 0.008 & 39\,102.74 & 0.7 & 30 & 9 & 431 & 4 & 0.008 \\
& & 6  & 56\,712.73 & 44.3 & 30 & 10 & 80\,172 & 4 & 0.010 & 56\,712.73 & 0.8 & 30 & 10 & 430 & 4 & 0.010 \\
& & 8  & 85\,373.21 & 23.3 & 30 & 11 & 22\,418 & 3 & 0.010 & 85\,373.21 & 0.6 & 30 & 11 & 669 & 3 & 0.010 \\
& & 10  & 107\,438.16 & 44.7 & 30 & 11 & 126\,906 & 3 & 0.079 & 107\,438.16 & 2.0 & 30 & 11 & 6\,596 & 3 & 0.079 \\
\hline
\multirow{5}{*}{132} & \multirow{5}{*}{(12,12)}
& 2  & 97\,020.71 & 0.7 & 132 & 8 & 1 & 2 & 0.009 & 97\,020.71 & 0.4 & 132 & 8 & 3 & 2 & 0.009 \\
& & 4  & 134\,696.98 & 0.8 & 132 & 8 & 1 & 2 & 0.006 & 134\,696.98 & 0.3 & 132 & 8 & 1 & 2 & 0.006 \\
& & 6  & 206\,690.32 & 1.7 & 132 & 10 & 2 & 3 & 0.000 & 206\,690.32 & 0.6 & 132 & 10 & 23 & 3 & 0.000 \\
& & 8  & 279\,290.32 & 1.7 & 132 & 10 & 2 & 3 & 0.002 & 279\,290.32 & 0.6 & 132 & 10 & 23 & 3 & 0.002 \\
& & 10  & 335\,336.96 & 1.8 & 132 & 10 & 2 & 3 & 0.000 & 335\,336.96 & 0.6 & 132 & 10 & 22 & 3 & 0.000 \\
\hline
\end{tabular}
\end{sidewaystable}

\section{Experiments on the complete Sioux Falls network}
\label{sec:app_bignet}
The experiments of Sections~\ref{sec:exp_exact}--\ref{sec:exp_heuristic} are conducted
on the twelve-node, thirty-six-arc subnetwork of the Sioux Falls network described in
Appendix~\ref{app:instance_Sioux}. 
To assess how the formulations scale with the network, we compare them here, in the
reference configuration of Section~\ref{sec:exp_setup} only, on the \emph{complete}
Sioux Falls network, which has $24$ nodes and $76$ arcs (see Figure~\ref{fig:siouxfalls}).

The instances are generated as on the subnetwork --- the same
Wardrop-equilibrium-based selection of the toll arcs (Appendix~\ref{app:toll_arcs}), the
same reference cost structure, and the same toll upper bound
$\tau_a^{\tt up}=\min(c_a^{\tt fft},\tau_a^\infty)$ --- and follow the same
$55$-instance design: eleven demand levels, ranging from $\left|\mathcal{K}\right|=4$ up
to the full origin--destination demand $\left|\mathcal{K}\right|=528$, each combined with
$|A^{\tt toll}|\in\{2,4,6,8,10\}$ toll arcs. For the smaller demand levels ($k\le 30$),
the first $k$ commodities are taken in the order given in
Table~\ref{tab:demands_eval_SiouxFalls_full}. Table~\ref{tab:rq_bignet} reports the
per-instance results of the exact formulations, with the same fields and the same pooled
instance-level bound $\overline{z}$ as Table~\ref{tab:rq1}.

\begin{table}[h!]
\centering

\caption{OD pairs considered in our instances based on the complete 
Sioux Falls network, with the demand values and identifiers. The ordering of
the demands (column \#) determines, for each $k\le 30$, the first $k$
commodities used in the corresponding instances.}
\label{tab:demands_eval_SiouxFalls_full}
 \begin{tabular}{|c|c|c||c|c|c||c|c|c|}
 \hline
\#& $(o_k,s_k)$& demand &\# & $(o_k,s_k)$& demand &\# & $(o_k,s_k)$& demand  \\
 \hline\hline
1&(1,10)&1300.0 &11&(12,7)&700.0&21&(9,20)&600.0\\
2&(17,22)&1700.0 &12&(13,2)&300.0 &22&(23,17)&600.0\\
3&(20,10)&2500.0 &13&(14,8)&400.0 &23&(10,13)&1900.0\\
4&(11,22)&1100.0 &14&(18,23)&100.0 &24&(15,1)&500.0\\
5&(2,11)&200.0 &15&(21,10)&1200.0 &25&(22,8)&500.0\\
6&(16,4)&800.0&16&(5,22)&200.0 &26&(2,16)&400.0\\
7&(8,1)&800.0&17&(24,16)&300.0&27&(4,23)&500.0\\
8&(3,16)&200.0&18&(7,22)&500.0&28 &(19,7)&400.0\\
9&(6,3)&300.0&19&(6,20)&300.0&29&(10,2)&600.0\\
10&(1,20)& 300.0&20&(8,13)&600.0&30&(13,9)&600.0\\
 \hline
 \end{tabular}
\end{table}

\paragraph{Scalability.}
The larger network sharpens the contrast already visible on the subnetwork. Within
the time limit, (BLS) solves only $7$ of the $55$ instances --- all with
$\left|\mathcal{K}\right|\le 6$ --- and, more tellingly, returns no feasible solution at all on
$31$ of them: on every instance with $\left|\mathcal{K}\right|\ge 10$, and on one further
instance with $\left|\mathcal{K}\right|=7$. Its shifted geometric mean solution time, $1306$
seconds, is close to the time limit itself. In contrast, (SD-A) solves $35$ of the
$55$ instances, returns a feasible solution on every instance, and is more than
five times faster in shifted geometric mean ($233$ seconds). On the six instances
solved by both formulations, (SD-A) is faster on five, with a median speed-up of
$21\times$. A single instance --- $\left|\mathcal{K}\right|=4$ with $6$ toll arcs --- is solved
by (BLS) but not by (SD-A), confirming that neither formulation dominates the other; 
but the overall gap in robustness is wide, (SD-A) solving five times as
many instances. 

\paragraph{Difficulty and network size.}
As on the subnetwork, the difficulty of an instance is not governed by the number of
commodities alone. (SD-A) solves all five full-demand instances ($\left|\mathcal{K}\right|=528$),
in between $149$ and $1355$ seconds, from an initial path set of about $540$ paths; yet
it reaches the time limit on three of the five instances for each of
$\left|\mathcal{K}\right|\in\{20,25,30\}$, and even on two of the five four-commodity instances
($\left|\mathcal{K}\right|=4$ with $6$ and $10$ toll arcs).

\paragraph{Gaps at the time limit.}
The certified gaps show that, here too, the failures of both formulations are
primarily of certification rather than of primal search. 
On the ten instances that (SD-A) solves and on which (BLS) returns a feasible solution
without proving its optimality, that solution is within $0.79\%$ of the optimum, and
$0.093\%$ on average. 
On the nineteen instances that
neither formulation solves, the pooled certified gap of (SD-A) remains moderate --- a
median of $2.2\%$ and a maximum of $13.2\%$ --- so that even where optimality is out of
reach, the strong-duality formulation delivers solutions of controlled quality.
Overall, the complete network confirms the conclusion drawn on the subnetwork:
(SD-A) is the more robust and more reliable of the two exact formulations, and the 
margin in its favour widens as the network grows.
\begin{sidewaystable}
\caption{\label{tab:rq_bignet} Comparison of (BLS) and (SD-A) on the complete Sioux Falls
network ($24$ nodes, $76$ arcs; time limit TL${}=1800$\,s). The first $30$
origin--destination pairs are listed in Table~\ref{tab:demands_eval_SiouxFalls_full} and
the toll arcs are selected as described in Appendix~\ref{app:toll_arcs}. Columns are as
in Table~\ref{tab:rq1}. A dash (--) indicates that no feasible solution was found.}
\tiny
\centering
\begin{tabular}{|c|c|c||c|c|c|c|c||c|c|c|c|c|c|c|c|}
\hline
\multirow{2}{*}{$\left|\mathcal{K}\right|$} &
\multirow{2}{*}{$(\#o,\#d)$} &
\multirow{2}{*}{$\left|A^{\tt{toll}}\right|$} &
\multicolumn{5}{|c||}{${\tt{(BLS)}}$} &
\multicolumn{8}{|c|}{${\tt{(SD\text{-}A)}}$}
\\
\cline{4-16}
& & & objective & time & nodes & gap$_{\tt own}$ (\%) & gap (\%)
& objective & time & $\#p_i$ & $\#p_g$ & nodes & it & gap$_{\tt own}$ (\%) & gap (\%)\\
\hline
\multirow{5}{*}{4} & \multirow{5}{*}{(4,2)}
& 2  & 7\,829.62 & 47.3 & 21\,315 & 0.009 & 0.008 & 7\,829.62 & 4.8 & 5 & 3 & 3 & 3 & 0.008 & 0.008 \\
& & 4  & 12\,030.89 & 58.1 & 29\,131 & 0.010 & 0.010 & 12\,030.89 & 221.3 & 6 & 3 & 256\,227 & 3 & 0.010 & 0.010 \\
& & 6  & 17\,428.60 & 92.8 & 48\,161 & 0.010 & 0.010 & 17\,425.21 & TL & 6 & 3 & 1\,649\,936 & 2 & 0.080 & 0.029 \\
& & 8  & 26\,046.03 & 109.7 & 52\,749 & 0.010 & 0.000 & 26\,046.03 & 2.5 & 6 & 4 & 3 & 3 & 0.000 & 0.000 \\
& & 10  & 36\,277.93 & TL & 1\,829\,351 & 0.958 & 0.958 & 35\,420.36 & TL & 6 & 2 & 1\,350\,462 & 2 & 3.775 & 3.402 \\
\hline
\multirow{5}{*}{5} & \multirow{5}{*}{(5,3)}
& 2  & 7\,829.62 & 116.2 & 21\,370 & 0.003 & 0.003 & 7\,829.62 & 6.0 & 8 & 2 & 3 & 3 & 0.005 & 0.003 \\
& & 4  & 10\,600.37 & 235.8 & 131\,248 & 0.010 & 0.001 & 10\,600.37 & 10.8 & 8 & 3 & 1\,325 & 3 & 0.001 & 0.001 \\
& & 6  & 13\,630.89 & TL & 905\,302 & 18.834 & 0.056 & 13\,628.84 & TL & 7 & 2 & 1\,149\,625 & 3 & 0.071 & 0.071 \\
& & 8  & 16\,362.67 & TL & 656\,199 & 7.469 & 0.208 & 16\,348.47 & TL & 7 & 2 & 1\,016\,000 & 2 & 0.295 & 0.295 \\
& & 10  & 21\,124.65 & TL & 775\,204 & 17.873 & 0.000 & 21\,124.64 & 0.7 & 7 & 4 & 3 & 3 & 0.000 & 0.000 \\
\hline
\multirow{5}{*}{6} & \multirow{5}{*}{(6,4)}
& 2  & 5\,149.26 & 1013.6 & 398\,784 & 0.008 & 0.007 & 5\,149.21 & 6.6 & 6 & 7 & 3 & 3 & 0.009 & 0.009 \\
& & 4  & 12\,978.89 & TL & 182\,912 & 20.713 & 0.008 & 12\,978.89 & 42.0 & 7 & 5 & 7\,931 & 3 & 0.008 & 0.008 \\
& & 6  & 15\,746.97 & TL & 211\,442 & 37.932 & 0.024 & 15\,749.63 & 36.3 & 7 & 7 & 1\,572 & 3 & 0.007 & 0.007 \\
& & 8  & 19\,254.81 & TL & 197\,267 & 39.006 & 0.037 & 19\,254.63 & TL & 8 & 7 & 234\,888 & 3 & 0.038 & 0.038 \\
& & 10  & 24\,519.08 & TL & 146\,078 & 57.526 & 0.229 & 23\,425.50 & TL & 8 & 4 & 920\,978 & 2 & 4.908 & 4.908 \\
\hline
\multirow{5}{*}{7} & \multirow{5}{*}{(7,5)}
& 2  & 4\,697.87 & TL & 81\,099 & 68.155 & 0.063 & 4\,700.39 & 14.5 & 7 & 8 & 69 & 4 & 0.010 & 0.010 \\
& & 4  & 12\,520.61 & TL & 94\,352 & 60.037 & 0.009 & 12\,520.61 & 11.3 & 7 & 6 & 3 & 3 & 0.009 & 0.009 \\
& & 6  & -- & TL & 101\,951 & -- & -- & 19\,950.23 & 25.0 & 9 & 5 & 1\,477 & 3 & 0.005 & 0.005 \\
& & 8  & 22\,720.98 & TL & 83\,668 & 88.231 & 0.009 & 22\,720.98 & 32.9 & 10 & 6 & 2\,008 & 3 & 0.009 & 0.009 \\
& & 10  & 28\,607.54 & TL & 83\,448 & 119.063 & 0.169 & 26\,967.56 & TL & 10 & 6 & 438\,682 & 3 & 6.261 & 6.261 \\
\hline
\multirow{5}{*}{8} & \multirow{5}{*}{(8,6)}
& 2  & 4\,304.97 & TL & 72\,030 & 187.962 & 0.009 & 4\,304.97 & 22.3 & 9 & 4 & 2\,875 & 3 & 0.009 & 0.009 \\
& & 4  & 8\,935.64 & TL & 58\,232 & 124.635 & 0.789 & 9\,005.36 & 156.3 & 10 & 6 & 34\,039 & 4 & 0.008 & 0.008 \\
& & 6  & 16\,825.58 & TL & 78\,669 & 80.544 & 0.010 & 16\,825.58 & 69.7 & 9 & 6 & 9\,955 & 3 & 0.010 & 0.010 \\
& & 8  & 23\,437.24 & TL & 59\,213 & 111.189 & 0.009 & 23\,437.24 & 58.7 & 10 & 7 & 8\,857 & 3 & 0.009 & 0.009 \\
& & 10  & 26\,497.87 & TL & 64\,901 & 150.867 & 0.378 & 24\,058.95 & TL & 10 & 5 & 472\,869 & 2 & 10.554 & 10.554 \\
\hline
\multirow{5}{*}{10} & \multirow{5}{*}{(9,8)}
& 2  & -- & TL & 20\,101 & -- & -- & 6\,090.25 & 14.6 & 11 & 7 & 4 & 4 & 0.001 & 0.001 \\
& & 4  & -- & TL & 24\,739 & -- & -- & 11\,266.46 & 37.9 & 12 & 8 & 1\,069 & 4 & 0.001 & 0.001 \\
& & 6  & -- & TL & 20\,994 & -- & -- & 12\,956.59 & 251.3 & 13 & 10 & 101\,148 & 4 & 0.010 & 0.010 \\
& & 8  & -- & TL & 23\,375 & -- & -- & 16\,360.56 & 162.6 & 15 & 3 & 51\,143 & 3 & 0.010 & 0.010 \\
& & 10  & -- & TL & 22\,710 & -- & -- & 19\,100.33 & TL & 14 & 4 & 605\,719 & 2 & 1.284 & 1.284 \\
\hline
\multirow{5}{*}{15} & \multirow{5}{*}{(14,12)}
& 2  & -- & TL & 5\,029 & -- & -- & 5\,969.11 & 55.7 & 16 & 8 & 1\,181 & 3 & 0.008 & 0.008 \\
& & 4  & -- & TL & 5\,553 & -- & -- & 9\,724.27 & 89.9 & 18 & 7 & 11\,057 & 3 & 0.006 & 0.006 \\
& & 6  & -- & TL & 8\,298 & -- & -- & 16\,765.66 & 274.1 & 20 & 6 & 145\,799 & 3 & 0.008 & 0.008 \\
& & 8  & -- & TL & 11\,292 & -- & -- & 20\,197.03 & TL & 20 & 4 & 395\,161 & 2 & 1.268 & 1.268 \\
& & 10  & -- & TL & 10\,895 & -- & -- & 24\,844.92 & TL & 18 & 7 & 383\,173 & 3 & 0.031 & 0.031 \\
\hline
\multirow{5}{*}{20} & \multirow{5}{*}{(17,13)}
& 2  & -- & TL & 4\,972 & -- & -- & 7\,738.08 & 74.4 & 22 & 10 & 2\,731 & 4 & 0.008 & 0.008 \\
& & 4  & -- & TL & 5\,152 & -- & -- & 13\,774.78 & TL & 24 & 10 & 317\,773 & 3 & 0.722 & 0.722 \\
& & 6  & -- & TL & 6\,026 & -- & -- & 17\,846.48 & TL & 24 & 5 & 53\,778 & 2 & 3.818 & 3.818 \\
& & 8  & -- & TL & 5\,174 & -- & -- & 21\,972.38 & TL & 27 & 4 & 104\,407 & 2 & 1.072 & 1.072 \\
& & 10  & -- & TL & 5\,103 & -- & -- & 30\,952.97 & 618.5 & 25 & 8 & 116\,695 & 3 & 0.010 & 0.010 \\
\hline
\multirow{5}{*}{25} & \multirow{5}{*}{(22,14)}
& 2  & -- & TL & 5\,244 & -- & -- & 9\,466.27 & 55.6 & 26 & 12 & 608 & 3 & 0.009 & 0.009 \\
& & 4  & -- & TL & 4\,650 & -- & -- & 16\,237.35 & 464.2 & 28 & 11 & 54\,036 & 3 & 0.010 & 0.010 \\
& & 6  & -- & TL & 3\,276 & -- & -- & 24\,335.52 & TL & 32 & 6 & 77\,358 & 2 & 0.984 & 0.984 \\
& & 8  & -- & TL & 4\,402 & -- & -- & 28\,328.24 & TL & 33 & 6 & 434\,804 & 2 & 3.232 & 3.232 \\
& & 10  & -- & TL & 4\,960 & -- & -- & 33\,366.85 & TL & 35 & 5 & 132\,914 & 2 & 3.508 & 3.508 \\
\hline
\multirow{5}{*}{30} & \multirow{5}{*}{(24,15)}
& 2  & -- & TL & 2\,643 & -- & -- & 7\,861.46 & 29.0 & 33 & 8 & 374 & 2 & 0.000 & 0.000 \\
& & 4  & -- & TL & 3\,910 & -- & -- & 16\,830.74 & 77.1 & 35 & 9 & 5\,485 & 2 & 0.006 & 0.006 \\
& & 6  & -- & TL & 5\,105 & -- & -- & 25\,325.45 & TL & 33 & 9 & 101\,653 & 2 & 2.208 & 2.208 \\
& & 8  & -- & TL & 4\,177 & -- & -- & 29\,621.06 & TL & 39 & 8 & 67\,672 & 2 & 10.849 & 10.849 \\
& & 10  & -- & TL & 3\,566 & -- & -- & 36\,010.16 & TL & 41 & 0 & 120\,618 & 1 & 13.177 & 13.177 \\
\hline
\multirow{5}{*}{528} & \multirow{5}{*}{(24,24)}
& 2  & -- & TL & 585 & -- & -- & 69\,977.71 & 908.4 & 533 & 97 & 1\,417 & 3 & 0.001 & 0.001 \\
& & 4  & -- & TL & 66 & -- & -- & 178\,591.69 & 209.6 & 537 & 101 & 2\,828 & 3 & 0.000 & 0.000 \\
& & 6  & -- & TL & 2\,279 & -- & -- & 331\,152.45 & 148.9 & 539 & 98 & 652 & 3 & 0.002 & 0.002 \\
& & 8  & -- & TL & 1\,870 & -- & -- & 395\,240.08 & 1355.0 & 536 & 102 & 145\,094 & 4 & 0.006 & 0.006 \\
& & 10  & -- & TL & 1\,171 & -- & -- & 460\,184.66 & 300.5 & 540 & 103 & 1\,676 & 4 & 0.005 & 0.005 \\
\hline
\end{tabular}
\end{sidewaystable}

\paragraph{Path-based heuristics.}
We also evaluate the two path-based heuristics on the complete network;
Table~\ref{tab:rq_bignet_heur} reports the per-instance results, with the certified
gap computed against the same pooled bound $\overline{z}$ as in
Table~\ref{tab:rq_bignet}. Where the optimum is known --- the $36$ instances solved
by an exact formulation --- both heuristics remain accurate: (SD-P) attains it on
$32$ of them and (CC) on $34$, and the largest shortfall of a returned solution is
$0.175\%$. Their value is clearest, however, where the exact formulations fail. 
On the $31$ instances on which (BLS) returns no feasible solution, both heuristics return
one; and on the $20$ instances on which (SD-A) reaches the time limit, (CC) returns,
in at most $548$ seconds, a solution strictly better than the (SD-A) incumbent on $18$
of them, by up to $10\%$. Across the benchmark the certified gap is at most $0.01\%$
on $32$ of the $55$ instances for (SD-P) and $33$ for (CC); the larger gaps fall on the
intermediate demand levels, which neither the exact formulations nor the heuristics
close.

\paragraph{The two heuristics no longer coincide.}
On the subnetwork (Section~\ref{sec:exp_heuristic}), (SD-P) and (CC) returned
essentially identical solutions and (CC) dominated in speed almost uniformly. On the
complete network the two heuristics part ways. Their values now differ by up to
$8.4\%$ (on $\left|\mathcal{K}\right|=30$ with $8$ toll arcs), and, where they differ, (CC) is
generally the stronger: it is faster than (SD-P) on $46$ of the $55$ instances, by a
median factor of $10$; it reaches the time limit on only two instances, against ten
for (SD-P); and on the hard intermediate-demand instances
($\left|\mathcal{K}\right|\in\{20,25,30\}$) it returns the better feasible solution --- for
instance a certified gap of $10.7\%$ against $14.7\%$ on $\left|\mathcal{K}\right|=30$ with $10$
toll arcs, on which (SD-P) reaches the time limit. The one axis on which (SD-P) retains an edge
is full demand: (CC)'s two time-limited instances are both at $\left|\mathcal{K}\right|=528$,
where (SD-P)'s generation terminates on all five. Even there, however, the advantage
is one of termination rather than of quality --- on $\left|\mathcal{K}\right|=528$ with $4$ toll
arcs (CC) matches the exact optimum while (SD-P), though it terminates, misses it by
$0.18\%$. The SOS1 branching that makes (CC) so effective thus scales less readily to
the $528$ commodities of the full-demand case, but without ever degrading the
solutions it returns. Overall, (CC) remains the method of choice across the benchmark,
with (SD-P) a useful complement only at full demand, where it is the more likely of
the two to terminate within the time budget.
\begin{sidewaystable}
\caption{\label{tab:rq_bignet_heur} Path-based heuristics (SD-P) and (CC) on the
complete Sioux Falls network ($24$ nodes, $76$ arcs; time limit TL${}=1800$\,s). The
``gap'' column is the certified gap against the pooled bound $\overline{z}$ of
Table~\ref{tab:rq_bignet}; all other columns are as in Table~\ref{tab:rq4}. 
}
\tiny
\centering
\begin{tabular}{|c|c|c||c|c|c|c|c|c|c||c|c|c|c|c|c|c|}
\hline
\multirow{2}{*}{$\left|\mathcal{K}\right|$} &
\multirow{2}{*}{$(\#o,\#d)$} &
\multirow{2}{*}{$\left|A^{\tt{toll}}\right|$} &
\multicolumn{7}{|c||}{(SD-P)} &
\multicolumn{7}{|c|}{(CC)}
\\
\cline{4-17}
& & &
obj & time & $\#p_i$ & $\#p_g$ & nodes & it & gap &
obj & time & $\#p_i$ & $\#p_g$ & nodes & it & gap \\
\hline
\multirow{5}{*}{4} & \multirow{5}{*}{(4,2)}
& 2  & 7\,829.62 & 1.2 & 5 & 3 & 3 & 3 & 0.008 & 7\,829.62 & 0.1 & 5 & 3 & 3 & 3 & 0.008 \\
& & 4  & 12\,030.89 & 1.7 & 6 & 2 & 3 & 3 & 0.010 & 12\,030.89 & 0.8 & 6 & 2 & 3\,669 & 3 & 0.010 \\
& & 6  & 17\,428.60 & 4.2 & 6 & 3 & 857 & 3 & 0.010 & 17\,428.60 & 1.7 & 6 & 4 & 15\,635 & 3 & 0.010 \\
& & 8  & 26\,046.03 & 1.1 & 6 & 3 & 3 & 3 & 0.000 & 26\,046.03 & 4.5 & 6 & 4 & 43\,357 & 3 & 0.000 \\
& & 10  & 36\,277.75 & 1.9 & 6 & 5 & 4 & 4 & 0.959 & 36\,277.93 & 6.7 & 6 & 5 & 65\,463 & 4 & 0.958 \\
\hline
\multirow{5}{*}{5} & \multirow{5}{*}{(5,3)}
& 2  & 7\,829.58 & 2.2 & 8 & 2 & 3 & 3 & 0.004 & 7\,829.62 & 0.2 & 8 & 2 & 15 & 3 & 0.003 \\
& & 4  & 10\,600.37 & 3.5 & 8 & 3 & 1\,505 & 3 & 0.001 & 10\,600.37 & 0.4 & 8 & 3 & 1\,751 & 3 & 0.001 \\
& & 6  & 13\,630.89 & 2.0 & 7 & 2 & 3 & 3 & 0.056 & 13\,630.89 & 0.8 & 7 & 2 & 3\,663 & 3 & 0.056 \\
& & 8  & 16\,362.66 & 1.9 & 7 & 3 & 3 & 3 & 0.208 & 16\,362.66 & 1.0 & 7 & 3 & 5\,263 & 3 & 0.208 \\
& & 10  & 21\,124.64 & 2.4 & 7 & 4 & 3 & 3 & 0.000 & 21\,124.64 & 3.2 & 7 & 4 & 20\,810 & 3 & 0.000 \\
\hline
\multirow{5}{*}{6} & \multirow{5}{*}{(6,4)}
& 2  & 5\,149.26 & 1.3 & 6 & 7 & 2 & 3 & 0.007 & 5\,149.26 & 0.2 & 6 & 7 & 22 & 3 & 0.007 \\
& & 4  & 12\,978.89 & 3.2 & 7 & 5 & 1\,162 & 3 & 0.008 & 12\,978.89 & 0.2 & 7 & 5 & 241 & 3 & 0.008 \\
& & 6  & 15\,749.63 & 7.6 & 7 & 7 & 3\,659 & 3 & 0.007 & 15\,749.63 & 0.8 & 7 & 7 & 3\,350 & 3 & 0.007 \\
& & 8  & 19\,122.04 & 2.7 & 8 & 6 & 4 & 4 & 0.731 & 19\,122.04 & 3.8 & 8 & 5 & 28\,100 & 3 & 0.731 \\
& & 10  & 24\,519.08 & 9.6 & 8 & 6 & 850 & 3 & 0.229 & 24\,519.08 & 3.7 & 8 & 6 & 18\,046 & 3 & 0.229 \\
\hline
\multirow{5}{*}{7} & \multirow{5}{*}{(7,5)}
& 2  & 4\,700.39 & 2.2 & 7 & 8 & 3 & 4 & 0.010 & 4\,700.39 & 0.3 & 7 & 8 & 1\,057 & 4 & 0.010 \\
& & 4  & 12\,520.61 & 1.6 & 7 & 6 & 2 & 3 & 0.009 & 12\,520.61 & 0.1 & 7 & 6 & 34 & 3 & 0.009 \\
& & 6  & 19\,950.23 & 3.0 & 9 & 5 & 46 & 3 & 0.005 & 19\,950.23 & 0.4 & 9 & 5 & 389 & 3 & 0.005 \\
& & 8  & 22\,720.98 & 16.2 & 10 & 5 & 4\,193 & 3 & 0.009 & 22\,720.98 & 0.6 & 10 & 5 & 1\,048 & 3 & 0.009 \\
& & 10  & 28\,623.49 & TL & 10 & 7 & 1\,005\,375 & 4 & 0.113 & 28\,623.68 & 3.2 & 10 & 7 & 17\,121 & 4 & 0.113 \\
\hline
\multirow{5}{*}{8} & \multirow{5}{*}{(8,6)}
& 2  & 4\,302.51 & 4.3 & 9 & 3 & 4\,497 & 3 & 0.066 & 4\,304.97 & 1.0 & 9 & 4 & 8\,463 & 3 & 0.009 \\
& & 4  & 9\,002.91 & 39.1 & 10 & 5 & 152\,421 & 4 & 0.036 & 9\,002.91 & 3.7 & 10 & 5 & 22\,591 & 4 & 0.036 \\
& & 6  & 16\,823.12 & 9.6 & 9 & 4 & 8\,011 & 3 & 0.024 & 16\,823.12 & 1.8 & 9 & 5 & 8\,311 & 3 & 0.024 \\
& & 8  & 23\,437.24 & 16.3 & 10 & 5 & 8\,222 & 3 & 0.009 & 23\,437.24 & 1.2 & 10 & 6 & 6\,338 & 3 & 0.009 \\
& & 10  & 26\,506.14 & 52.1 & 10 & 7 & 104\,885 & 3 & 0.347 & 26\,506.14 & 41.5 & 10 & 6 & 431\,407 & 3 & 0.347 \\
\hline
\multirow{5}{*}{10} & \multirow{5}{*}{(9,8)}
& 2  & 6\,090.25 & 3.1 & 11 & 7 & 4 & 4 & 0.001 & 6\,090.25 & 0.3 & 11 & 7 & 37 & 4 & 0.001 \\
& & 4  & 11\,266.46 & 6.8 & 12 & 8 & 1\,196 & 4 & 0.001 & 11\,266.46 & 0.8 & 12 & 8 & 2\,205 & 4 & 0.001 \\
& & 6  & 12\,956.59 & 79.5 & 13 & 8 & 163\,838 & 4 & 0.010 & 12\,956.46 & 2.9 & 13 & 6 & 13\,087 & 4 & 0.011 \\
& & 8  & 16\,360.56 & 21.4 & 15 & 5 & 14\,031 & 3 & 0.010 & 16\,360.56 & 1.8 & 15 & 5 & 7\,620 & 3 & 0.010 \\
& & 10  & 19\,245.79 & 319.5 & 14 & 8 & 307\,963 & 3 & 0.519 & 19\,245.79 & 10.9 & 14 & 8 & 58\,025 & 3 & 0.519 \\
\hline
\multirow{5}{*}{15} & \multirow{5}{*}{(14,12)}
& 2  & 5\,969.11 & 9.6 & 16 & 8 & 5\,797 & 3 & 0.008 & 5\,969.11 & 0.4 & 16 & 8 & 826 & 3 & 0.008 \\
& & 4  & 9\,724.27 & 9.5 & 18 & 11 & 2\,854 & 3 & 0.006 & 9\,724.27 & 0.7 & 18 & 11 & 291 & 3 & 0.006 \\
& & 6  & 16\,765.66 & 31.8 & 20 & 6 & 9\,042 & 3 & 0.008 & 16\,765.66 & 4.7 & 20 & 6 & 19\,634 & 3 & 0.008 \\
& & 8  & 20\,281.36 & 40.3 & 20 & 6 & 20\,201 & 3 & 0.847 & 20\,281.36 & 2.7 & 20 & 6 & 9\,383 & 3 & 0.847 \\
& & 10  & 24\,844.10 & 15.6 & 18 & 7 & 4\,801 & 3 & 0.035 & 24\,844.10 & 1.8 & 18 & 7 & 4\,757 & 3 & 0.035 \\
\hline
\multirow{5}{*}{20} & \multirow{5}{*}{(17,13)}
& 2  & 7\,738.08 & 163.9 & 22 & 10 & 30\,305 & 4 & 0.008 & 7\,738.08 & 0.9 & 22 & 10 & 2\,922 & 4 & 0.008 \\
& & 4  & 13\,826.14 & TL & 24 & 11 & 2\,832\,008 & 3 & 0.348 & 13\,826.22 & 2.4 & 24 & 11 & 10\,076 & 3 & 0.347 \\
& & 6  & 17\,839.72 & TL & 24 & 5 & 1\,872\,774 & 2 & 3.857 & 17\,849.09 & 2.1 & 24 & 7 & 8\,831 & 3 & 3.803 \\
& & 8  & 21\,973.41 & TL & 27 & 4 & 498\,026 & 2 & 1.067 & 22\,004.11 & 2.0 & 27 & 6 & 3\,759 & 4 & 0.926 \\
& & 10  & 30\,952.97 & 47.3 & 25 & 8 & 7\,759 & 3 & 0.010 & 30\,952.97 & 1.4 & 25 & 8 & 1\,631 & 3 & 0.010 \\
\hline
\multirow{5}{*}{25} & \multirow{5}{*}{(22,14)}
& 2  & 9\,466.27 & 3.3 & 26 & 15 & 2 & 3 & 0.009 & 9\,466.27 & 0.5 & 26 & 15 & 102 & 3 & 0.009 \\
& & 4  & 16\,237.35 & 179.3 & 28 & 13 & 164\,122 & 3 & 0.010 & 16\,237.35 & 1.3 & 28 & 13 & 3\,165 & 3 & 0.010 \\
& & 6  & 24\,335.57 & TL & 32 & 6 & 242\,083 & 2 & 0.984 & 24\,335.60 & 5.3 & 32 & 7 & 22\,530 & 3 & 0.984 \\
& & 8  & 27\,925.61 & TL & 33 & 5 & 2\,990\,020 & 2 & 4.720 & 28\,329.31 & 26.5 & 33 & 7 & 174\,817 & 3 & 3.228 \\
& & 10  & 33\,370.48 & TL & 35 & 6 & 188\,195 & 2 & 3.497 & 33\,384.44 & 43.2 & 35 & 8 & 151\,251 & 3 & 3.453 \\
\hline
\multirow{5}{*}{30} & \multirow{5}{*}{(24,15)}
& 2  & 7\,861.46 & 20.3 & 33 & 12 & 5\,907 & 3 & 0.000 & 7\,861.46 & 0.6 & 33 & 12 & 163 & 3 & 0.000 \\
& & 4  & 16\,830.74 & 650.8 & 35 & 13 & 732\,516 & 3 & 0.006 & 16\,830.74 & 2.0 & 35 & 13 & 4\,137 & 3 & 0.006 \\
& & 6  & 25\,279.95 & TL & 33 & 12 & 1\,976\,806 & 3 & 2.392 & 25\,736.28 & 3.2 & 33 & 12 & 9\,916 & 3 & 0.576 \\
& & 8  & 27\,918.94 & TL & 39 & 0 & 316\,931 & 1 & 17.607 & 30\,486.34 & 84.0 & 39 & 9 & 312\,775 & 4 & 7.703 \\
& & 10  & 35\,523.31 & TL & 41 & 0 & 315\,996 & 1 & 14.728 & 36\,831.20 & 547.7 & 41 & 6 & 1\,995\,165 & 3 & 10.654 \\
\hline
\multirow{5}{*}{528} & \multirow{5}{*}{(24,24)}
& 2  & 69\,977.71 & 13.1 & 533 & 98 & 3 & 3 & 0.001 & 69\,977.71 & 1587.6 & 533 & 98 & 1\,672\,419 & 3 & 0.001 \\
& & 4  & 178\,278.68 & 1112.4 & 537 & 101 & 7\,756 & 3 & 0.176 & 178\,591.69 & TL & 537 & 96 & 1\,857\,097 & 2 & 0.000 \\
& & 6  & 331\,152.45 & 13.4 & 539 & 97 & 3 & 3 & 0.002 & 331\,152.46 & 986.3 & 539 & 97 & 1\,186\,715 & 3 & 0.002 \\
& & 8  & 395\,239.80 & 13.4 & 536 & 101 & 3 & 3 & 0.006 & 395\,239.80 & 319.4 & 536 & 101 & 446\,803 & 3 & 0.006 \\
& & 10  & 460\,184.65 & 14.2 & 540 & 98 & 3 & 3 & 0.005 & 460\,184.65 & TL & 540 & 98 & 3\,277\,078 & 3 & 0.005 \\
\hline
\end{tabular}
\end{sidewaystable}

\section{Experiments on the Eastern Massachusetts network}
\label{sec:EAM_net}

The Eastern Massachusetts network, with $74$ nodes and $258$ arcs, is the largest that
we consider. Instances are generated as on the Sioux Falls networks --- the same
Wardrop-equilibrium-based selection of the toll arcs (Appendix~\ref{app:toll_arcs}), the
same reference cost structure and toll upper bound
$\tau_a^{\tt up}=\min(c_a^{\tt fft},\tau_a^\infty)$ --- over twelve demand levels
ranging from $\left|\mathcal{K}\right|=4$ to $\left|\mathcal{K}\right|=100$, each
combined with $|A^{\tt toll}|\in\{2,4,6,8,10\}$ toll arcs, for $60$ instances in all;
the first $30$ demands of the nested demand ladder are listed in
Table~\ref{tab:em_od30}. Tables~\ref{tab:em_exact} and~\ref{tab:em_heur} report the
per-instance results for the exact formulations and for the heuristics, with the same
pooled instance-level bound $\overline{z}$ as before.

\begin{table}[h!]
\centering
\caption{\label{tab:em_od30} The first $30$ origin--destination demands used to build
the Eastern Massachusetts instances, in the order in which they enter the nested
demand ladder ($\left|\mathcal{K}\right|=k$ uses demands $1$ to $k$). 
}
 \begin{tabular}{|c|c|c||c|c|c||c|c|c|}
 \hline
\#& $(o_k,s_k)$& demand &\# & $(o_k,s_k)$& demand &\# & $(o_k,s_k)$& demand  \\
 \hline\hline
1&(36,12)&14.33 &11&(58,38)&17.46&21&(66,25)&10.98\\
2&(50,57)&121.44 &12&(54,1)&0.53 &22&(39,35)&6.96\\
3&(1,54)&1.88 &13&(12,55)&146.29 &23&(69,54)&624.56\\
4&(1,36)&11.72 &14&(62,13)&5.23 &24&(54,69)&562.29\\
5&(64,1)&0.55 &15&(2,14)&104.28 &25&(12,56)&146.29\\
6&(57,50)&157.19&16&(55,12)&136.83 &26&(56,12)&136.83\\
7&(38,57)&5.22&17&(36,2)&13.10&27&(1,59)&2.02\\
8&(13,64)&2.01&18&(1,62)&1.94&28&(1,69)&1.97\\
9&(51,10)&11.35&19&(53,2)&4.66&29&(1,67)&1.95\\
10&(10,51)&8.15&20&(25,66)&42.32&30&(1,61)&1.88\\
 \hline
 \end{tabular}
\end{table}

\paragraph{Scalability.}
The larger network drives the gap between the two exact formulations to its extreme.
Within the time limit, (BLS) solves \emph{none} of the $60$ instances and returns a
feasible solution on only $15$ of them; on the remaining $45$ it finds no feasible toll
vector at all within the budget. (SD-A), by contrast, solves $26$ of the $60$ instances
and returns a feasible solution on every one, with a shifted geometric mean solution
time of $392$ seconds, against the full time limit for (BLS). As on the smaller
networks, the instances (SD-A) solves are not simply the smallest: 
it solves all ten instances with $\left|\mathcal{K}\right|\in\{4,5\}$, but only one or two
per demand level thereafter, 
the difficulty again turning on the interaction between commodities and tolled arcs
rather than on size alone.

\paragraph{Gaps at the time limit.}
The certified gaps confirm that the failures are ones of certification rather than of
primal search. 
On the eleven instances that
(SD-A) solves and on which (BLS) returns a feasible solution, that solution is optimal
up to numerical precision, even though (BLS) never certifies it. On the $34$ instances
that neither formulation solves, the pooled certified gap of (SD-A) stays moderate on
most --- a median of $2.1\%$ --- though it reaches $42\%$ on the hardest, reflecting how
loose the available upper bound becomes once the network is large and the demand
intermediate.
\begin{sidewaystable}
\caption{\label{tab:em_exact} Comparison of the exact formulations (BLS) and (SD-A) on the
Eastern Massachusetts network ($74$ nodes, $258$ arcs; time limit TL${}=1800$\,s). The ``objective''
field reports a valid lower bound $\underline{z}$ on $z^\star$; ``gap$_{\tt own}$'' the gap certified
by the formulation alone, and ``gap'' the certified gap $(\overline{z}-\underline{z})/\underline{z}$
obtained with the pooled instance-level bound $\overline{z}$. A dash (--) indicates that no feasible
solution was found.}
\tiny
\centering
\begin{tabular}{|c|c|c||c|c|c|c|c||c|c|c|c|c|c|c|c|}
\hline
\multirow{2}{*}{$\left|\mathcal{K}\right|$} &
\multirow{2}{*}{$(\#o,\#d)$} &
\multirow{2}{*}{$\left|A^{\tt{toll}}\right|$} &
\multicolumn{5}{|c||}{${\tt{(BLS)}}$} &
\multicolumn{8}{|c|}{${\tt{(SD\text{-}A)}}$}
\\
\cline{4-16}
& & & objective & time & nodes & gap$_{\tt own}$ (\%) & gap (\%)
& objective & time & $\#p_i$ & $\#p_g$ & nodes & it & gap$_{\tt own}$ (\%) & gap (\%)\\
\hline
\multirow{5}{*}{4} & \multirow{5}{*}{(3,4)}
& 2  & 3.22 & TL & 503\,972 & 0.941 & 0.000 & 3.22 & 1.5 & 4 & 1 & 2 & 2 & 0.010 & 0.000 \\
& & 4  & 4.48 & TL & 426\,107 & 3.505 & 0.006 & 4.48 & 4.6 & 6 & 1 & 1\,976 & 2 & 0.010 & 0.010 \\
& & 6  & 5.65 & TL & 91\,006 & 14.927 & 0.010 & 5.65 & 32.8 & 6 & 3 & 1\,123 & 2 & 0.010 & 0.010 \\
& & 8  & 6.74 & TL & 104\,643 & 31.428 & 0.005 & 6.74 & 9.9 & 6 & 3 & 875 & 2 & 0.004 & 0.004 \\
& & 10  & 8.65 & TL & 55\,380 & 113.798 & 0.006 & 8.65 & 1.2 & 6 & 1 & 2 & 2 & 0.007 & 0.007 \\
\hline
\multirow{5}{*}{5} & \multirow{5}{*}{(4,5)}
& 2  & 1.99 & TL & 257\,500 & 1.636 & 0.000 & 1.99 & 2.2 & 7 & 1 & 2 & 2 & 0.000 & 0.000 \\
& & 4  & 4.36 & TL & 150\,580 & 5.151 & 0.001 & 4.36 & 4.8 & 7 & 1 & 149 & 2 & 0.001 & 0.001 \\
& & 6  & 4.67 & TL & 194\,846 & 5.445 & 0.008 & 4.67 & 4.3 & 7 & 1 & 580 & 2 & 0.009 & 0.009 \\
& & 8  & 5.36 & TL & 108\,087 & 13.582 & 0.009 & 5.36 & 14.7 & 6 & 4 & 3 & 3 & 0.009 & 0.009 \\
& & 10  & 5.88 & TL & 97\,517 & 48.963 & 0.011 & 5.88 & 107.2 & 8 & 3 & 50\,364 & 2 & 0.010 & 0.010 \\
\hline
\multirow{5}{*}{6} & \multirow{5}{*}{(5,6)}
& 2  & 20.22 & TL & 36\,403 & 32.883 & 0.000 & 20.22 & 51.2 & 7 & 3 & 2\,816 & 3 & 0.001 & 0.001 \\
& & 4  & 37.28 & TL & 23\,776 & 62.598 & 0.720 & 37.43 & TL & 9 & 2 & 191\,967 & 3 & 0.310 & 0.310 \\
& & 6  & 39.42 & TL & 23\,155 & 70.048 & 0.914 & 39.42 & TL & 10 & 1 & 270\,205 & 2 & 0.925 & 0.925 \\
& & 8  & -- & TL & 21\,200 & -- & -- & 41.71 & TL & 10 & 0 & 200\,610 & 1 & 1.888 & 1.888 \\
& & 10  & 39.42 & TL & 21\,649 & 157.619 & 8.840 & 42.02 & TL & 10 & 0 & 216\,374 & 1 & 2.087 & 2.087 \\
\hline
\multirow{5}{*}{7} & \multirow{5}{*}{(6,6)}
& 2  & -- & TL & 17\,019 & -- & -- & 20.22 & 97.8 & 8 & 3 & 2\,681 & 3 & 0.002 & 0.002 \\
& & 4  & -- & TL & 15\,543 & -- & -- & 37.40 & TL & 10 & 1 & 91\,564 & 2 & 0.601 & 0.601 \\
& & 6  & -- & TL & 17\,124 & -- & -- & 39.42 & TL & 11 & 1 & 206\,695 & 2 & 1.021 & 1.021 \\
& & 8  & -- & TL & 18\,548 & -- & -- & 41.71 & TL & 11 & 0 & 179\,151 & 1 & 1.875 & 1.875 \\
& & 10  & 40.52 & TL & 12\,730 & 196.391 & 5.835 & 42.02 & TL & 11 & 0 & 187\,747 & 1 & 2.053 & 2.053 \\
\hline
\multirow{5}{*}{8} & \multirow{5}{*}{(7,7)}
& 2  & -- & TL & 19\,730 & -- & -- & 1.70 & 2.8 & 11 & 4 & 2 & 2 & 0.000 & 0.000 \\
& & 4  & -- & TL & 15\,808 & -- & -- & 7.07 & 11.7 & 12 & 4 & 834 & 2 & 0.002 & 0.002 \\
& & 6  & -- & TL & 10\,911 & -- & -- & 31.04 & TL & 12 & 7 & 141\,244 & 5 & 0.110 & 0.110 \\
& & 8  & -- & TL & 5\,668 & -- & -- & 45.31 & TL & 14 & 0 & 99\,156 & 1 & 6.285 & 6.285 \\
& & 10  & -- & TL & 6\,396 & -- & -- & 46.62 & TL & 14 & 0 & 161\,757 & 1 & 7.276 & 7.276 \\
\hline\hline
\multirow{5}{*}{10} & \multirow{5}{*}{(9,9)}
& 2  & -- & TL & 8\,088 & -- & -- & 1.69 & 4.0 & 13 & 3 & 2 & 2 & 0.006 & 0.006 \\
& & 4  & -- & TL & 5\,504 & -- & -- & 4.23 & 17.1 & 15 & 5 & 573 & 2 & 0.009 & 0.009 \\
& & 6  & -- & TL & 5\,157 & -- & -- & 4.66 & TL & 16 & 0 & 91\,706 & 1 & 42.413 & 42.413 \\
& & 8  & -- & TL & 5\,907 & -- & -- & 10.04 & TL & 17 & 0 & 96\,568 & 1 & 18.016 & 18.016 \\
& & 10  & -- & TL & 4\,306 & -- & -- & 38.90 & TL & 18 & 0 & 102\,937 & 1 & 3.913 & 3.913 \\
\hline
\multirow{5}{*}{15} & \multirow{5}{*}{(14,13)}
& 2  & -- & TL & 2\,491 & -- & -- & 12.80 & 41.3 & 18 & 3 & 507 & 2 & 0.000 & 0.000 \\
& & 4  & -- & TL & 2\,236 & -- & -- & 15.55 & 170.0 & 21 & 4 & 10\,251 & 2 & 0.000 & 0.000 \\
& & 6  & -- & TL & 2\,097 & -- & -- & 14.95 & TL & 22 & 0 & 88\,782 & 1 & 20.962 & 20.962 \\
& & 8  & -- & TL & 2\,066 & -- & -- & 26.76 & TL & 23 & 0 & 75\,690 & 1 & 13.547 & 13.547 \\
& & 10  & -- & TL & 2\,190 & -- & -- & 36.89 & TL & 26 & 0 & 74\,000 & 1 & 10.258 & 10.258 \\
\hline
\multirow{5}{*}{20} & \multirow{5}{*}{(17,16)}
& 2  & -- & TL & 1\,476 & -- & -- & 16.88 & 37.3 & 21 & 3 & 2 & 2 & 0.000 & 0.000 \\
& & 4  & -- & TL & 1\,631 & -- & -- & 19.74 & 419.9 & 24 & 3 & 29\,614 & 2 & 0.000 & 0.000 \\
& & 6  & -- & TL & 1\,775 & -- & -- & 21.36 & TL & 28 & 0 & 125\,621 & 1 & 1.378 & 1.378 \\
& & 8  & -- & TL & 2\,310 & -- & -- & 33.79 & TL & 29 & 0 & 151\,097 & 1 & 1.795 & 1.795 \\
& & 10  & -- & TL & 2\,299 & -- & -- & 41.85 & TL & 30 & 0 & 70\,200 & 1 & 18.194 & 18.194 \\
\hline
\multirow{5}{*}{25} & \multirow{5}{*}{(20,20)}
& 2  & -- & TL & 666 & -- & -- & 20.76 & 295.6 & 26 & 12 & 380 & 4 & 0.000 & 0.000 \\
& & 4  & -- & TL & 542 & -- & -- & 35.71 & TL & 30 & 10 & 63\,960 & 4 & 0.142 & 0.142 \\
& & 6  & -- & TL & 916 & -- & -- & 59.77 & TL & 32 & 10 & 95\,536 & 3 & 1.073 & 1.073 \\
& & 8  & -- & TL & 1\,244 & -- & -- & 63.15 & TL & 34 & 8 & 57\,372 & 3 & 2.160 & 2.160 \\
& & 10  & -- & TL & 449 & -- & -- & 143.90 & TL & 36 & 7 & 94\,146 & 3 & 0.110 & 0.110 \\
\hline
\multirow{5}{*}{30} & \multirow{5}{*}{(21,23)}
& 2  & -- & TL & 32 & -- & -- & 27.81 & 525.2 & 33 & 11 & 9\,527 & 4 & 0.003 & 0.003 \\
& & 4  & -- & TL & 208 & -- & -- & 53.89 & 214.0 & 37 & 9 & 13\,323 & 3 & 0.010 & 0.010 \\
& & 6  & -- & TL & 118 & -- & -- & 69.10 & TL & 41 & 14 & 99\,806 & 3 & 0.391 & 0.391 \\
& & 8  & -- & TL & 18 & -- & -- & 92.38 & TL & 42 & 18 & 101\,691 & 3 & 0.812 & 0.812 \\
& & 10  & -- & TL & 18 & -- & -- & 117.63 & TL & 47 & 9 & 84\,365 & 3 & 2.711 & 2.711 \\
\hline
\multirow{5}{*}{50} & \multirow{5}{*}{(26,29)}
& 2  & -- & TL & 16 & -- & -- & 30.07 & 202.9 & 53 & 18 & 773 & 4 & 0.001 & 0.001 \\
& & 4  & -- & TL & 1 & -- & -- & 65.14 & 816.1 & 59 & 12 & 29\,279 & 3 & 0.010 & 0.010 \\
& & 6  & -- & TL & 1 & -- & -- & 90.30 & TL & 61 & 17 & 61\,099 & 4 & 0.890 & 0.890 \\
& & 8  & -- & TL & 31 & -- & -- & 102.68 & TL & 65 & 7 & 57\,566 & 2 & 10.302 & 10.302 \\
& & 10  & -- & TL & 1 & -- & -- & 118.13 & TL & 72 & 9 & 59\,018 & 2 & 29.360 & 29.360 \\
\hline
\multirow{5}{*}{100} & \multirow{5}{*}{(32,35)}
& 2  & -- & TL & 1 & -- & -- & 26.09 & 323.9 & 109 & 28 & 617 & 3 & 0.010 & 0.010 \\
& & 4  & -- & TL & 1 & -- & -- & 50.01 & TL & 110 & 31 & 30\,190 & 3 & 1.068 & 1.068 \\
& & 6  & -- & TL & 1 & -- & -- & 82.83 & TL & 129 & 30 & 22\,010 & 3 & 23.212 & 23.212 \\
& & 8  & -- & TL & 1 & -- & -- & 99.94 & TL & 140 & 30 & 54\,794 & 3 & 20.063 & 20.063 \\
& & 10  & -- & TL & 1 & -- & -- & 127.68 & TL & 141 & 28 & 34\,094 & 3 & 15.651 & 15.651 \\
\hline
\end{tabular}
\end{sidewaystable}

\paragraph{Path-based heuristics.}
The path-based heuristics extend their usefulness to this network. On the $26$
instances whose optimum is known, both (SD-P) and (CC) attain it, with a worst-case
shortfall of $0.001\%$; and on the $45$ instances on which (BLS) returns nothing, both
return a feasible solution. The two heuristics differ mainly in robustness and cost: 
(CC) is faster than (SD-P) on $58$ of the $60$ instances, with a median speed-up of $13$
over the whole benchmark, 
and reaches the time limit on a single instance, against twenty for (SD-P). On the $34$
instances on which (SD-A) itself reaches the time limit, (CC) returns a solution at
least as good as the (SD-A) incumbent on \emph{all} of them, and (SD-P) on $22$; where
the exact formulations run out of time, the complementarity-based heuristic is thus the
most reliable source of a good feasible toll vector on this network. 
\begin{sidewaystable}
\caption{\label{tab:em_heur} Path-based heuristics (SD-P) and (CC) on the
Eastern Massachusetts network ($74$ nodes, $258$ arcs; time limit TL${}=1800$\,s). The
``gap'' column is the certified gap against the pooled bound $\overline{z}$ of
Table~\ref{tab:em_exact}; all other columns are as in Table~\ref{tab:rq4}. A dash
(--) indicates that no feasible solution was found.}
\tiny
\centering
\begin{tabular}{|c|c|c||c|c|c|c|c|c|c||c|c|c|c|c|c|c|}
\hline
\multirow{2}{*}{$\left|\mathcal{K}\right|$} &
\multirow{2}{*}{$(\#o,\#d)$} &
\multirow{2}{*}{$\left|A^{\tt{toll}}\right|$} &
\multicolumn{7}{|c||}{(SD-P)} &
\multicolumn{7}{|c|}{(CC)}
\\
\cline{4-17}
& & &
obj & time & $\#p_i$ & $\#p_g$ & nodes & it & gap &
obj & time & $\#p_i$ & $\#p_g$ & nodes & it & gap \\
\hline
\multirow{5}{*}{4} & \multirow{5}{*}{(3,4)}
& 2  & 3.22 & 0.1 & 4 & 1 & 0 & 2 & 0.000 & 3.22 & 0.1 & 4 & 1 & 0 & 2 & 0.000 \\
& & 4  & 4.48 & 0.9 & 6 & 1 & 1\,003 & 2 & 0.010 & 4.48 & 0.2 & 6 & 1 & 16 & 2 & 0.010 \\
& & 6  & 5.65 & 1.9 & 6 & 3 & 1\,793 & 2 & 0.010 & 5.65 & 0.2 & 6 & 3 & 8 & 2 & 0.010 \\
& & 8  & 6.74 & 18.9 & 6 & 2 & 132\,703 & 2 & 0.005 & 6.74 & 0.4 & 6 & 2 & 197 & 2 & 0.005 \\
& & 10  & 8.65 & 0.2 & 6 & 1 & 2 & 2 & 0.007 & 8.65 & 0.2 & 6 & 1 & 6 & 2 & 0.007 \\
\hline
\multirow{5}{*}{5} & \multirow{5}{*}{(4,5)}
& 2  & 1.99 & 0.3 & 7 & 1 & 6 & 2 & 0.000 & 1.99 & 0.1 & 7 & 1 & 2 & 2 & 0.000 \\
& & 4  & 4.36 & 0.2 & 7 & 1 & 4 & 2 & 0.001 & 4.36 & 0.2 & 7 & 1 & 10 & 2 & 0.001 \\
& & 6  & 4.67 & 0.5 & 7 & 1 & 197 & 2 & 0.009 & 4.67 & 0.2 & 7 & 1 & 16 & 2 & 0.009 \\
& & 8  & 5.36 & 0.3 & 6 & 3 & 2 & 2 & 0.009 & 5.36 & 0.2 & 6 & 3 & 2 & 2 & 0.009 \\
& & 10  & 5.88 & 3.0 & 8 & 3 & 1\,377 & 2 & 0.011 & 5.88 & 0.3 & 8 & 3 & 14 & 2 & 0.011 \\
\hline
\multirow{5}{*}{6} & \multirow{5}{*}{(5,6)}
& 2  & 20.22 & 1.3 & 7 & 3 & 1\,105 & 3 & 0.000 & 20.22 & 0.2 & 7 & 3 & 3 & 3 & 0.000 \\
& & 4  & 37.44 & 122.0 & 9 & 2 & 331\,417 & 2 & 0.303 & 37.44 & 0.5 & 9 & 2 & 941 & 2 & 0.303 \\
& & 6  & 39.15 & TL & 10 & 0 & 231\,886 & 1 & 1.617 & 39.42 & 8.0 & 10 & 2 & 83\,787 & 2 & 0.915 \\
& & 8  & 41.41 & TL & 10 & 0 & 644\,448 & 1 & 2.631 & 41.78 & 8.1 & 10 & 2 & 61\,653 & 2 & 1.712 \\
& & 10  & 41.76 & TL & 10 & 0 & 373\,967 & 1 & 2.728 & 42.10 & 20.5 & 10 & 2 & 233\,296 & 2 & 1.912 \\
\hline
\multirow{5}{*}{7} & \multirow{5}{*}{(6,6)}
& 2  & 20.22 & 2.4 & 8 & 3 & 542 & 3 & 0.002 & 20.22 & 0.2 & 8 & 3 & 3 & 3 & 0.002 \\
& & 4  & 37.45 & 65.1 & 10 & 2 & 226\,514 & 2 & 0.479 & 37.45 & 0.5 & 10 & 2 & 1\,166 & 2 & 0.479 \\
& & 6  & 39.15 & TL & 11 & 0 & 502\,845 & 1 & 1.716 & 39.42 & 3.1 & 11 & 2 & 20\,179 & 2 & 1.013 \\
& & 8  & 41.41 & TL & 11 & 0 & 494\,961 & 1 & 2.623 & 41.79 & 11.2 & 11 & 2 & 110\,170 & 2 & 1.699 \\
& & 10  & 41.76 & TL & 11 & 0 & 416\,280 & 1 & 2.701 & 42.10 & 33.8 & 11 & 2 & 385\,733 & 2 & 1.878 \\
\hline
\multirow{5}{*}{8} & \multirow{5}{*}{(7,7)}
& 2  & 1.70 & 0.3 & 11 & 3 & 2 & 2 & 0.000 & 1.70 & 0.3 & 11 & 3 & 6 & 2 & 0.000 \\
& & 4  & 7.07 & 1.7 & 12 & 4 & 328 & 3 & 0.002 & 7.07 & 0.4 & 12 & 1 & 226 & 2 & 0.002 \\
& & 6  & 31.04 & 1.9 & 12 & 7 & 18 & 4 & 0.087 & 31.04 & 0.6 & 12 & 7 & 142 & 3 & 0.087 \\
& & 8  & 46.87 & TL & 14 & 4 & 3\,514\,436 & 3 & 2.761 & 46.87 & 32.5 & 14 & 7 & 224\,218 & 4 & 2.743 \\
& & 10  & 49.06 & 708.2 & 14 & 2 & 245\,767 & 2 & 1.953 & 49.06 & 6.0 & 14 & 2 & 37\,730 & 2 & 1.954 \\
\hline
\multirow{5}{*}{10} & \multirow{5}{*}{(9,9)}
& 2  & 1.69 & 0.4 & 13 & 3 & 2 & 2 & 0.006 & 1.69 & 0.3 & 13 & 3 & 8 & 2 & 0.006 \\
& & 4  & 4.23 & 1.0 & 15 & 5 & 2 & 2 & 0.009 & 4.23 & 0.4 & 15 & 5 & 16 & 2 & 0.009 \\
& & 6  & 6.20 & 1.1 & 16 & 4 & 2 & 2 & 7.118 & 6.20 & 0.4 & 16 & 4 & 56 & 2 & 7.118 \\
& & 8  & 11.57 & 2.7 & 17 & 2 & 142 & 2 & 2.430 & 11.57 & 0.8 & 17 & 2 & 413 & 2 & 2.430 \\
& & 10  & 39.03 & TL & 18 & 3 & 777\,378 & 3 & 3.578 & 39.27 & 2.1 & 18 & 6 & 1\,834 & 4 & 2.944 \\
\hline
\multirow{5}{*}{15} & \multirow{5}{*}{(14,13)}
& 2  & 12.80 & 1.0 & 18 & 3 & 13 & 2 & 0.000 & 12.80 & 0.4 & 18 & 3 & 2 & 2 & 0.000 \\
& & 4  & 15.55 & 14.3 & 21 & 4 & 4\,427 & 2 & 0.000 & 15.55 & 0.7 & 21 & 4 & 149 & 2 & 0.001 \\
& & 6  & 15.26 & TL & 22 & 0 & 761\,392 & 1 & 18.467 & 16.91 & 0.7 & 22 & 4 & 142 & 2 & 6.909 \\
& & 8  & 26.95 & TL & 23 & 0 & 2\,403\,284 & 1 & 12.735 & 28.98 & 1.4 & 23 & 4 & 4\,524 & 2 & 4.824 \\
& & 10  & 36.83 & TL & 26 & 0 & 682\,674 & 1 & 10.447 & 38.72 & 2.4 & 26 & 4 & 4\,956 & 3 & 5.044 \\
\hline
\multirow{5}{*}{20} & \multirow{5}{*}{(17,16)}
& 2  & 16.88 & 1.2 & 21 & 3 & 2 & 2 & 0.000 & 16.88 & 0.5 & 21 & 3 & 2 & 2 & 0.000 \\
& & 4  & 19.74 & 8.5 & 24 & 3 & 5\,067 & 2 & 0.000 & 19.74 & 0.6 & 24 & 3 & 2 & 2 & 0.001 \\
& & 6  & 21.36 & 1047.3 & 28 & 1 & 1\,493\,571 & 2 & 1.361 & 21.36 & 0.5 & 28 & 0 & 141 & 1 & 1.363 \\
& & 8  & 33.79 & 7.3 & 29 & 0 & 2\,391 & 1 & 1.783 & 33.79 & 0.6 & 29 & 0 & 195 & 1 & 1.783 \\
& & 10  & 43.63 & 19.4 & 30 & 2 & 12\,217 & 2 & 13.374 & 43.63 & 1.1 & 30 & 2 & 424 & 2 & 13.374 \\
\hline
\multirow{5}{*}{25} & \multirow{5}{*}{(20,20)}
& 2  & 20.76 & 4.5 & 26 & 14 & 4 & 4 & 0.000 & 20.76 & 1.0 & 26 & 14 & 4 & 4 & 0.000 \\
& & 4  & 35.72 & 156.3 & 30 & 13 & 61\,957 & 4 & 0.126 & 35.72 & 3.0 & 30 & 13 & 12\,129 & 4 & 0.126 \\
& & 6  & 60.16 & 305.1 & 32 & 13 & 396\,359 & 4 & 0.423 & 60.16 & 8.8 & 32 & 13 & 41\,254 & 4 & 0.423 \\
& & 8  & 63.01 & TL & 34 & 7 & 937\,934 & 2 & 2.380 & 63.54 & 17.0 & 34 & 10 & 97\,620 & 3 & 1.539 \\
& & 10  & 143.73 & TL & 36 & 0 & 596\,355 & 1 & 0.227 & 143.91 & 19.0 & 36 & 9 & 95\,444 & 3 & 0.107 \\
\hline
\multirow{5}{*}{30} & \multirow{5}{*}{(21,23)}
& 2  & 27.81 & 30.1 & 33 & 13 & 8\,431 & 4 & 0.000 & 27.81 & 1.6 & 33 & 13 & 313 & 4 & 0.003 \\
& & 4  & 53.89 & 18.7 & 37 & 11 & 13\,996 & 3 & 0.010 & 53.89 & 5.2 & 37 & 11 & 21\,247 & 3 & 0.009 \\
& & 6  & 69.10 & TL & 41 & 12 & 1\,946\,090 & 4 & 0.396 & 69.11 & 11.1 & 41 & 12 & 38\,240 & 4 & 0.386 \\
& & 8  & 89.83 & TL & 42 & 10 & 1\,093\,149 & 2 & 3.672 & 92.39 & 43.4 & 42 & 16 & 210\,412 & 4 & 0.802 \\
& & 10  & 116.30 & TL & 47 & 0 & 620\,116 & 1 & 3.887 & 119.61 & 330.6 & 47 & 12 & 1\,581\,413 & 3 & 1.010 \\
\hline
\multirow{5}{*}{50} & \multirow{5}{*}{(26,29)}
& 2  & 30.07 & 31.3 & 53 & 18 & 6\,409 & 4 & 0.000 & 30.07 & 2.5 & 53 & 18 & 1\,655 & 4 & 0.000 \\
& & 4  & 65.14 & 57.0 & 59 & 15 & 67\,294 & 3 & 0.010 & 65.14 & 12.6 & 59 & 15 & 113\,913 & 3 & 0.010 \\
& & 6  & 90.38 & 726.3 & 61 & 16 & 245\,515 & 4 & 0.805 & 90.38 & 7.3 & 61 & 16 & 17\,324 & 4 & 0.805 \\
& & 8  & 104.31 & TL & 65 & 0 & 367\,880 & 1 & 8.583 & 106.12 & 427.9 & 65 & 13 & 2\,203\,071 & 3 & 6.730 \\
& & 10  & 132.82 & TL & 72 & 0 & 429\,314 & 1 & 15.049 & 132.83 & TL & 72 & 10 & 6\,654\,012 & 2 & 15.041 \\
\hline
\multirow{5}{*}{100} & \multirow{5}{*}{(32,35)}
& 2  & 26.10 & 24.5 & 109 & 45 & 1\,086 & 4 & 0.000 & 26.10 & 6.4 & 109 & 45 & 430 & 4 & 0.000 \\
& & 4  & 50.15 & 697.7 & 110 & 48 & 857\,470 & 4 & 0.795 & 50.15 & 246.1 & 110 & 47 & 766\,063 & 4 & 0.794 \\
& & 6  & 101.35 & 626.1 & 129 & 43 & 464\,324 & 4 & 0.701 & 101.35 & 22.5 & 129 & 43 & 27\,933 & 4 & 0.701 \\
& & 8  & 117.82 & TL & 140 & 23 & 482\,746 & 2 & 1.842 & 118.93 & 32.7 & 140 & 43 & 36\,824 & 4 & 0.890 \\
& & 10  & 145.23 & TL & 141 & 23 & 428\,673 & 2 & 1.675 & 146.48 & 41.1 & 141 & 42 & 56\,525 & 4 & 0.806 \\
\hline
\end{tabular}
\end{sidewaystable}

\end{document}